\documentclass[a4paper,11pt]{amsart}
\usepackage[margin=3.5cm]{geometry}
\usepackage[english]{babel}
\usepackage{amsmath}
\usepackage{amsfonts}
\usepackage{amssymb}
\usepackage{amsthm}
\usepackage{lmodern} \normalfont
\usepackage[T1]{fontenc}
\usepackage{tikz}
\usepackage{color}
\usepackage{hyperref}
\usepackage{enumitem}
\setlist[itemize]{leftmargin=11pt,itemsep=5pt, topsep=5pt}

\usetikzlibrary{positioning,calc}
\usetikzlibrary{shapes}
\DeclareFontShape{T1}{lmr}{bx}{sc} { <-> ssub * cmr/bx/sc }{}
\hypersetup{pdfstartview={XYZ null null 1.00}}

\setlength{\parindent}{0cm}

\newtheoremstyle{een}
{11pt}% measure of space to leave above the theorem. E.g.: 3pt
{11pt}% measure of space to leave below the theorem. E.g.: 3pt
{\slshape}% name of font to use in the body of the theorem
{}% measure of space to indent
{\sc}% name of head font
{.}% punctuation between head and body
{1mm}% space after theorem head
{}% Manually specify head

\newtheoremstyle{twee}
{11pt}% measure of space to leave above the theorem. E.g.: 3pt
{11pt}% measure of space to leave below the theorem. E.g.: 3pt
{}% name of font to use in the body of the theorem
{}% measure of space to indent
{\sc}% name of head font
{.}% punctuation between head and body
{1mm}% space after theorem head
{}% Manually specify head

\theoremstyle{een}
\newtheorem{theorem}{\textbf{Theorem}}[section]
\newtheorem{proposition}[theorem]{\textbf{Proposition}}
\newtheorem{lemma}[theorem]{\textbf{Lemma}}

\theoremstyle{twee}
\newtheorem{definition}[theorem]{\textbf{Definition}}
\newtheorem{example}[theorem]{\textbf{Example}}
\newtheorem{remark}[theorem]{\textbf{Remark}}
\newtheorem*{proofoftheorem1}{\textbf{Proof of Theorem \ref{thm1}}}
\newtheorem*{proofoftheorem2}{\textbf{Proof of Theorem \ref{thm2}}}
\newtheorem*{proofofkey2}{\textbf{Proof of Proposition \ref{key2}}}

\makeatletter
\newcommand{\addresseshere}{%
  \enddoc@text\let\enddoc@text\relax
}
\makeatother

\raggedbottom

\begin{document}
\title{Concurrent lines on del Pezzo surfaces of degree one}

\author{Ronald van Luijk and Rosa Winter}\address{Mathematisch Instituut, Niels Bohrweg 1, 2333 CA Leiden, Netherlands}
\email{rvl@math.leidenuniv.nl}
\address{King's College London, Strand, London, WC2R 2LS, United Kingdom}
\email{rosa.winter@kcl.ac.uk}

\begin{abstract}
Let $X$ be a del Pezzo surface of degree one over an algebraically closed field, and $K_X$ its canonical divisor. The morphism $\varphi$ induced by $|-2K_X|$ realizes $X$ as a double cover of a cone in $\mathbb{P}^3$, ramified over a smooth sextic curve. The surface $X$ contains 240 exceptional curves. We prove the following statements. For a point~$P$ on the ramification curve of $\varphi$, at most sixteen exceptional curves contain~$P$ in characteristic $2$, and at most ten in all other characteristics. Moreover, for a point $Q$ outside the ramification curve, at most twelve exceptional curves contain $Q$ in characteristic $3$, and at most ten in all other characteristics. We show that these upper bounds are sharp, except possibly in characteristic 5 outside the ramification curve. 
\end{abstract}

\maketitle

\section{Introduction}

A del Pezzo surface over a field $k$ is a smooth, projective, geometrically integral surface over $k$ with ample anticanonical divisor. The degree of a del Pezzo surface is the self-intersection number of the canonical divisor, and this is at most 9. Over an algebraically closed field, del Pezzo surfaces of degree $d$ are isomorphic to $\mathbb{P}^2$ blown up at $9-d$ points in general position for $d\neq8$, and to $\mathbb{P}^1\times\mathbb{P}^1$ or $\mathbb{P}^2$ blown up in one point for $d=8$ (\cite{Man74}, Theorem 24.4). For $d\geq3$, a del Pezzo surface of degree~$d$ can be embedded as a surface of degree $d$ in $\mathbb{P}^d$; for example, del Pezzo surfaces of degree three are exactly the smooth cubic surfaces in $\mathbb{P}^3$.\\
An exceptional curve on a del Pezzo surface $X$ with canonical divisor $K_X$ is an irreducible projective curve $C\subset X$ such that $C^2=C\cdot K_X=-1$. For del Pezzo surfaces of degree $d\geq3$, the exceptional curves are exactly the lines on the model of degree $d$ in $\mathbb{P}^d$; for $d=3$ this gives a description of the 27 lines on a cubic surface. The following table shows how many exceptional curves there are for different degrees $d$ over an algebraically closed field.
$$\begin{array}{c|c c c c c c c }
d &  1 & 2 & 3 & 4 & 5 & 6 & 7 \\
\hline
\mbox{exceptional curves} &  240 & 56 & 27 & 16 & 10 & 6 & 3 \\
\end{array}$$

We call a set of exceptional curves \textsl{concurrent} in a point on the surface if that point is contained in all of them. It is well known that on del Pezzo surfaces of degree 3, the number of exceptional curves that are concurrent in a point is at most 3. This can be seen by looking at the graph on the 27 exceptional curves, where two vertices are connected by an edge if the corresponding exceptional curves intersect. For all del Pezzo surfaces of degree 3 this gives the same graph $G$. A set of concurrent exceptional curves corresponds in this way to a complete subgraph of $G$, and the maximal size of complete subgraphs in $G$ is 3. 

On a del Pezzo surface of degree 2, the number of concurrent exceptional curves in a point is at most 4. As in the case for degree 3, this can be derived directly from the intersection graph on the 56 exceptional curves. A geometric argument why 4 is an upper bound is given in \cite{TVAV09}, in the proof of Lemma 4.1. An example where this upper bound is reached is given in \cite{STVA}, Example 2.4. 

\vspace{11pt}

For del Pezzo surfaces of degree 1, the situation is more complex. Contrary to del Pezzo surfaces of degree $\geq2$, for char $k\neq2$, the maximal size of complete subgraphs of the graph on the 240 exceptional curves, which we will show is 16, is not equal to the maximal number of exceptional curves that are concurrent in a point. Let $X$ be a del Pezzo surface of degree 1 over an algebraically closed field $k$, and let $K_X$ be the canonical divisor on $X$. The linear system $|-2K_X|$ gives $X$ the structure of a double cover of a cone $Q$ in $\mathbb{P}^3$, ramified over a smooth curve that is cut out by a cubic surface (\cite{Dem80}, Surfaces de del Pezzo - V, Section 5). Let $\varphi$ be the morphism associated to this linear system. In this article we prove the following two theorems. 

\begin{theorem}\label{thm1} Let $P\in X(k)$ be a point on the ramification curve of $\varphi$. The number of exceptional curves that go through $P$ is at most ten if char $k\neq2$, and at most sixteen if char~$k=2$. 
\end{theorem}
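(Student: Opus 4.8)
The plan is to work entirely in the Picard lattice of $X$, which is the rank-$9$ lattice $\mathbb{Z}^9$ with the standard basis $e_0, e_1, \dots, e_8$, intersection form $e_0^2 = 1$, $e_i^2 = -1$, and canonical class $K_X = -3e_0 + e_1 + \dots + e_8$. The $240$ exceptional curves are exactly the classes $C$ with $C^2 = C \cdot K_X = -1$, and two distinct exceptional curves $C, C'$ meet at a point precisely when $C \cdot C' \geq 1$; since $(C + C')^2 = -2 + 2 C\cdot C'$ and $(C+C') \cdot K_X = -2$, the intersection number $C \cdot C'$ can be read off combinatorially. The key geometric input I would exploit is the structure coming from the double cover $\varphi$: a point $P$ on the ramification curve is a ramification point, so the involution $\iota$ of the double cover (the Bertini involution, acting on $\operatorname{Pic} X$ as $-\operatorname{id}$ composed with reflection in $K_X$, i.e. $D \mapsto -D - (D\cdot K_X)K_X$) fixes $P$. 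Therefore an exceptional curve $C$ through $P$ is mapped by $\iota$ to another exceptional curve $\iota(C)$ also through $P$, and these come in pairs $\{C, \iota(C)\}$ unless $C = \iota(C)$.

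First I would compute $\iota(C)$ explicitly: for an exceptional curve $C$ one has $C \cdot K_X = -1$, so $\iota(C) = -C - (-1)K_X = -C + K_X$, which one checks is again an exceptional class, and $C + \iota(C) = K_X$. This is the crucial identity. It means the $240$ exceptional curves split into $120$ pairs summing to $-(-K_X)$, wait—into pairs $\{C, C'\}$ with $C + C' = K_X$; but since $-K_X$ is effective and $K_X$ is not, in fact $C \neq \iota(C)$ always, giving $120$ genuine pairs. Now if $C$ and $C' = \iota(C)$ both pass through the ramification point $P$, then $C \cdot C' \geq 1$, but $C \cdot C' = C \cdot (K_X - C) = C\cdot K_X - C^2 = -1 - (-1) = 0$. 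Hence $C$ and $\iota(C)$ are disjoint, contradicting that they both pass through $P$. Therefore, through a ramification point $P$, at most one curve from each $\iota$-pair can pass. This already cuts the maximum in half in spirit, but the real content is sharper.

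Next I would translate ``at most ten (resp.\ sixteen) curves through $P$'' into a bound on a combinatorial configuration in the lattice. Let $\mathcal{C}_P$ be the set of exceptional curves through $P$; any two of them meet at $P$, so $C \cdot C' \geq 1$ for all distinct $C, C' \in \mathcal{C}_P$, i.e.\ $\mathcal{C}_P$ is a clique in the intersection graph $G$. The maximal clique size in $G$ is $16$, which handles the characteristic-$2$ bound once one shows that configuration is realizable and that the pairing obstruction above does not further constrain it in characteristic $2$ — here the point is that in characteristic $2$ the ramification behaviour of $\varphi$ degenerates (the sextic branch curve and the involution behave differently, e.g.\ $\iota$ may fix a curve), so the disjointness argument collapses and the full clique of $16$ can be concurrent. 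For $\operatorname{char} k \neq 2$, I would argue that any clique of concurrent curves through a ramification point must avoid taking both members of any $\iota$-pair, and then show that the largest such ``pair-free'' sub-clique of $G$ has size at most $10$. Concretely I would enumerate, up to the action of the Weyl group $W(E_8)$ of symmetries of the lattice, the cliques in $G$, record which pairs of concurrent curves are $\iota$-conjugate, and verify that imposing $C \cdot \iota(C) = 0$ (disjointness) on every pair forces the size down to $10$; the sharp example would come from exhibiting an explicit $X$ and point $P$ realizing a $10$-clique with no $\iota$-pair.

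I expect the main obstacle to be the combinatorial case analysis bounding the pair-free cliques by $10$ in odd characteristic: the clique structure of $G$ is rich, and simply knowing the maximum clique is $16$ does not immediately yield that forbidding conjugate pairs drops the bound to exactly $10$ rather than, say, $11$ or $12$. I would handle this either by a careful orbit analysis under $W(E_8)$ — classifying maximal cliques into a few Weyl-orbits and checking the pairing constraint on each representative — or, if the geometry permits, by a cleaner structural argument showing that the curves through $P$ together with their images under $\iota$ would force a $20$-clique or an impossible sub-configuration in $G$ if eleven or more were concurrent. The subtlety in characteristic $2$, namely verifying that the upper bound genuinely rises to $16$ because the involution argument fails, would need separate justification tied to the inseparability of $\varphi$ there, and this is where the geometry rather than pure lattice combinatorics must enter.
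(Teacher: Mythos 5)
Your argument breaks at its central computation: the formula for the Bertini involution. The involution fixes $K_X$ and acts as $-\mathrm{id}$ on $K_X^{\perp}$, so on Pic $X$ (where $K_X^2=1$) it is given by $D\mapsto 2(D\cdot K_X)K_X-D$, not by $D\mapsto -D-(D\cdot K_X)K_X$. Your map is not even an isometry of the lattice, and the class it assigns to an exceptional class $C$, namely $K_X-C$, satisfies $(K_X-C)^2=-1+2+1=2$, so it is not an exceptional class at all; moreover two effective classes can never sum to $K_X$, since $-K_X$ is ample. With the correct formula one gets $\iota(C)=-2K_X-C$, and then $C\cdot\iota(C)=-2\,C\cdot K_X-C^2=2+1=3$: the Bertini involution sends each exceptional curve to the unique exceptional curve meeting it with multiplicity $3$, i.e.\ to its \emph{partner} in the terminology of the paper. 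Consequently your key deduction is exactly backwards. An exceptional curve through a ramification point and its involution image are not disjoint; they both pass through that point (this is Remark \ref{rem}), so the concurrent curves at a point of the ramification curve come in partner pairs and form a set of even size whose clique in $G$ has only edges of weights $1$ and $3$ (Remark \ref{partners}). Nothing gets ``cut in half.''

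Even after repairing the involution, the remainder of your plan cannot succeed, because the bound of ten in characteristic $\neq 2$ is not a lattice-theoretic fact. Cliques in $G$ consisting of $n$ partner pairs exist for $n$ up to $8$, i.e.\ of size $16$ (consistent with the overall clique bound of Proposition \ref{bovengrens}), and in characteristic $2$ sixteen exceptional curves genuinely are concurrent at a point of the ramification curve (Example \ref{char2}); hence no enumeration of Weyl-group orbits of cliques, with whatever pairing constraints, can produce the number $10$. The characteristic dependence must enter through geometry, and this is precisely the role of the paper's Proposition \ref{key1}: after using transitivity of $W$ on cliques of six partner pairs (Proposition \ref{transitief}) and blowing down to $\mathbb{P}^2$ (Lemma \ref{BD}), one shows by explicit elimination that the four lines through the pairs $\{Q_{2i-1},Q_{2i}\}$ of the eight blown-up points and three associated singular cubics cannot all pass through one point --- an argument in which the hypothesis char $k\neq 2$ is used concretely (a factor $2$ in a determinant). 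That geometric input, which rules out six or more concurrent partner pairs and thereby caps the count at five pairs, i.e.\ ten curves, is the idea missing from your outline.
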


\begin{theorem}\label{thm2} Let $Q\in X(k)$ be a point outside the ramification curve of $\varphi$. The number of exceptional curves that go through $Q$ is at most ten if char $k\neq3$, and at most twelve if char $k=3$.
\end{theorem}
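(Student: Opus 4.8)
The plan is to translate concurrency at $Q$ into a counting problem on the anticanonical curve through $Q$, then into lattice combinatorics in $K_X^\perp\cong E_8$ together with a torsion condition whose behaviour depends on the characteristic. First I would fix the anticanonical structure: the pencil $|-K_X|$ has a single base point $\mathcal O$, and through any $Q\neq\mathcal O$ there is a unique member $E\in|-K_X|$. Since every exceptional curve $C$ satisfies $-K_X\cdot C=1$, it meets $E$ in exactly one point, and $C$ passes through $Q$ if and only if that point is $Q$. Writing each exceptional class as $C=w-K_X$ with $w=C+K_X\in K_X^\perp$, the $240$ exceptional curves correspond to the $240$ roots $w$ (those with $w^2=-2$), and the restriction map $\rho\colon\operatorname{Pic}X\to\operatorname{Pic}E$ induces a group homomorphism $\psi\colon K_X^\perp\to\operatorname{Pic}^0E$. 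Using $-K_X|_E=[\mathcal O]$ one checks that $C$ passes through $Q$ precisely when $\psi(w)=\tau$, where $\tau=[Q]-[\mathcal O]\in\operatorname{Pic}^0E$ is independent of $C$. Thus the exceptional curves through $Q$ are in bijection with the roots lying in a single coset of $\ker\psi$, and I reduce the theorem to bounding the number of roots in such a coset.

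Next I would pin down the pairwise intersections. Two distinct curves in the coset both pass through $Q$, so $C\cdot C'\geq1$, i.e. $w\cdot w'\geq0$; and $C\cdot C'=3$ is impossible, since that forces $w'=-w$, a Bertini pair with $C+\sigma^*C=-2K_X$, and such pairs meet only on the ramification curve — indeed $\varphi$ contracts $C$ and $\sigma^*C$ to the same conic, so $C\cap\sigma^*C$ maps into the branch curve — whereas $Q$ lies off it. Hence any two roots in the coset satisfy $w\cdot w'\in\{0,1\}$, i.e. $C\cdot C'\in\{1,2\}$. The Gram matrix of such a system is $-(2I-A)$ for the adjacency matrix $A$ of the graph recording which products equal $1$; it is automatically negative semidefinite of rank at most $8$, which forces the graph to be a disjoint union of $ADE$ and affine $ADE$ diagrams. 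A short rank count in $E_8$ then shows that at most twelve roots can occur, with twelve attained only by four triangles $4\widetilde A_2$, that is $A_2^{\oplus4}\subset E_8$; this already yields the bound $12$, and I would likewise enumerate the configurations of size eleven.

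The characteristic now enters through a torsion condition. Each affine component of type $\widetilde A_n$ consists of $n+1$ roots summing to zero, all sent to $\tau$ by $\psi$, whence $(n+1)\tau=0$; in particular every triangle forces $3\tau=0$, so $\tau$ must be a nonzero $3$-torsion point of $\operatorname{Pic}^0E$ (the case $\tau=0$, i.e. $Q=\mathcal O$, being treated separately). The size of $\operatorname{Pic}^0E[3]$ is exactly what separates the characteristics: for $E$ smooth it is $(\mathbb Z/3)^2$ when $\operatorname{char}k\neq3$ but of order at most $3$ when $\operatorname{char}k=3$, while for a singular anticanonical curve $\operatorname{Pic}^0E$ is $\mathbb G_m$ or $\mathbb G_a$, the latter carrying no $3$-torsion unless $\operatorname{char}k=3$, in which case every point is $3$-torsion. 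Matching this against the glue $E_8/A_2^{\oplus4}\cong(\mathbb Z/3)^2$ — i.e. analysing which roots outside $A_2^{\oplus4}$ the extended $\psi$ can also carry to $\tau$ — one finds that in characteristic $\neq3$ the available $3$-torsion is too small for eleven or twelve roots to share one coset without forcing either a Bertini pair into it (hence $Q$ onto the ramification) or $\tau=0$, so the realizable maximum falls to ten; in characteristic $3$ the degeneration of the $3$-torsion removes this obstruction and the full $4\widetilde A_2$ is realized, giving twelve.

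I expect the main obstacle to be precisely this last step: controlling, characteristic by characteristic, which cosets of $\ker\psi$ are geometrically realizable, that is, combining the lattice classification of the roots in a coset with the precise structure of $\operatorname{Pic}^0E[3]$ and the glue group, and carefully excluding the degenerate possibilities ($\tau$ of order $2$, $\tau=0$, and $Q=\mathcal O$) in order to convert the uniform combinatorial bound of twelve into the sharp value ten away from characteristic $3$.
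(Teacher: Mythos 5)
Your route is genuinely different from the paper's, which never uses the elliptic fibration: the paper classifies cliques with edge weights $1,2$ in the intersection graph (maximal size $12$, no maximal cliques of size $11$, Proposition \ref{max12bvt}), uses transitivity of the Weyl group (Propositions \ref{trans12}, \ref{trans11}) to reduce to one explicit configuration of eight blown-up points, and then kills that configuration by the long elimination/Gr\"obner computation of Proposition \ref{key2}. Your first steps are correct and attractive: the bijection between exceptional curves through $Q$ and roots in a single coset of $\ker\psi$, the exclusion of weight-$3$ pairs off the ramification curve, the classification of the coset as a disjoint union of $ADE$/affine $ADE$ diagrams, the bound $12$ attained only by $\widetilde A_2^{\,4}$, and the deduction $3\tau=0$ from any $\widetilde A_2$-component. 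In particular the characteristic-$3$ bound of twelve already follows, and if your last step were completed this would give a conceptual proof where the paper needs machine computation.

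The genuine gap is exactly the step you flag, and as you state it the argument fails: counting $3$-torsion cannot separate the characteristics, because the inequality runs the wrong way. A point $\tau$ of exact order $3$ exists in \emph{every} characteristic (for smooth $E$ one has $\operatorname{Pic}^0(E)[3]\cong(\mathbb{Z}/3)^2$ precisely when $\operatorname{char}k\neq3$, and $\mu_3\cong\mathbb{Z}/3$ for nodal $E$), and nothing in your sketch prevents $\psi(E_8)$ from being just $\langle\tau\rangle\cong\mathbb{Z}/3$, which embeds in $\operatorname{Pic}^0 E$ always; the glue group $E_8/A_2^{\oplus4}\cong(\mathbb{Z}/3)^2$ only bounds the image from \emph{above}. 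What is needed is a lower bound on the image: if the coset contains the twelve roots of a $\widetilde A_2^{\,4}$, one must show $\ker\psi$ is \emph{exactly} the difference lattice $D$ (index $27$), whence $\psi(E_8)\cong E_8/D\cong(\mathbb{Z}/3)^3$ — and it is this group that cannot embed in $\operatorname{Pic}^0(E)$ when $\operatorname{char}k\neq3$ (where $\operatorname{Pic}^0(E)[3]$ has order at most $9$), while it does embed in $\mathbb{G}_a(k)=(k,+)$ when $\operatorname{char}k=3$. That forcing requires two ingredients absent from your proposal: (i) $\ker\psi$ contains no root, i.e.\ no exceptional curve meets $E$ at the base point $\mathcal O$ of $|-K_X|$ (if a root $w$ lay in the kernel, so would $-w$, and the curve and its partner would meet at $\mathcal O$, which is off the ramification curve); your parenthetical about a Bertini pair being forced into the coset of $\tau$ cannot be the mechanism, since for $\tau$ of order $3$ that coset never contains a pair $\pm w$ — the issue lives in the kernel, not in the coset; and (ii) the lattice facts that $E_8/D$ is elementary abelian of order $27$ and that every nonzero coset of $D$ contains a root ($9$ or $12$ of them), so any subgroup strictly between $D$ and $E_8$ contains a root. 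The size-$11$ case is also not ``likewise'': configurations such as $\widetilde A_2^{\,3}\oplus A_2$ or $\widetilde A_2^{\,2}\oplus\widetilde D_4$ survive your diagram classification, and eliminating them needs the extra observation that $3\tau=0$ makes the coset closed under completing an edge $\{w_1,w_2\}$ to the triangle root $-(w_1+w_2)$, which reduces every component to $A_1$ or $\widetilde A_2$; then eleven roots would require $A_2^{\oplus3}\oplus A_1^{\oplus2}$ of full rank inside $E_8$, impossible since its discriminant $108$ is not a square. Finally, the degenerate cases you set aside ($Q=\mathcal O$, and $Q$ a singular point of $E$) are asserted to be ``treated separately'' but never treated, and item (i) above is needed to handle the first of them anyway.
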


Using the ramification divisor of $\varphi$, we obtain with a simple geometrical argument a slightly weaker upper bound of 12 outside characteristic 2 for Theorem \ref{thm1}, see Remark~\ref{12upperbound}. This was pointed out to us by Niels Lubbes. 

\vspace{11pt}

In \cite{SL14}, Example 4.1, for any field of characteristic unequal to 2, 3, or 5, a del Pezzo surface of degree 1 is defined that contains a point outside the ramification curve that is contained in 10 exceptional curves. This shows that the upper bound for char $k\neq2,3,5$ in Theorem \ref{thm2} is sharp. In Section \ref{examples} we show in all characteristics except for characteristic 5 in the case of Theorem \ref{thm2}, that the upper bounds in Theorems \ref{thm1} and \ref{thm2} are sharp.
Theorems \ref{thm1} and \ref{thm2} are proved by using results on the automorphism group of the graph on the 240 exceptional curves, and by Propositions \ref{key1} and \ref{key2}, which are purely geometrical and show that certain curves in $\mathbb{P}^2$ do not go through the same point.

\vspace{11pt}

The article is organized as follows. We first recall some background and results on the set of exceptional curves on a del Pezzo surface of degree 1 in Section \ref{exccurves}. In this section we mostly use results about del Pezzo surfaces from \cite{Man74}, and results about the weighted graph on the exceptional curves from \cite{WvL}. In Section~\ref{ponthecurve} we prove Theorem \ref{thm1}, and in Section~\ref{poffthecurve} we prove Theorem \ref{thm2}. Finally, Section \ref{examples} contains examples.

\vspace{11pt}

We use \texttt{magma} (\cite{MR1484478}) for our computations, which is the case only in Propositions~\ref{key1} and \ref{key2}. The proofs of Propositions \ref{bovengrens}, \ref{max12bvt}, \ref{trans12}, and \ref{trans11} rely on results in \cite{WvL} that also make use of \texttt{magma}.

\vspace{11pt}

We want to thank Igor Dolgachev and Niels Lubbes for useful discussions and comments. We also want to thank an anonymous referee for giving useful remarks that improved the quality of the paper.
This paper is dedicated to Bas Edixhoven, who we thank posthumously for his insights on a seemingly shorter, but incorrect proof of Theorem \ref{thm1} that had been
presented to us.

\section{A weighted graph on exceptional classes}\label{exccurves}

In this section we recall some results on the Picard group of $X$. We then construct a weighted graph on the 240 exceptional curves, and use the relation between these curves and the $E_8$ root system to study this graph.

\vspace{11pt}

Throughout this paper, when we say that some points in $\mathbb{P}^2$ are in general position, we mean that in the following sense. 

\begin{definition}
Let $r\leq8$ be an integer, and let $P_1,\ldots,P_r$ be points in $\mathbb{P}^2$. 
Then we say that $P_1,\ldots,P_r$ are in \textsl{general position} if there is no line containing three of the points, no conic containing six of the points, and no cubic containing eight of the points with a singularity at one of them. 
\end{definition}

The surface $X$ is isomorphic to $\mathbb{P}^2$ blown up in eight points $P_1,\ldots,P_8$ in general position. Let Pic $X$ be the Picard group of $X$. For $i\in\{1,\ldots,8\}$, let $E_i$ be the class in Pic $X$ corresponding to the exceptional curve above $P_i$, and $L$ the class in Pic $X$ corresponding to the pullback of a line in $\mathbb{P}^2$ not passing through any of the~$P_i$. Then Pic $X$ is isomorphic to $\mathbb{Z}^9$, with basis $\{L,E_1,\ldots,E_8\}$ (\cite{Man74}, Corollary~20.9.1). For $i,j\in\{1,\ldots,8\},\;i\neq j$, we have 
$$E_i^2=-1,\;\;\;\;\;\;\;E_i\cdot E_j=0,\;\;\;\;\;\;\;L\cdot E_i=0,\;\;\;\;\;\;\;L^2=1.$$ Let $K_X$ be the class of a canonical divisor of $X$. Then we have $-K_X=3L-\sum_{i=1}^rE_i$ (\cite{Man74}, Proposition 20.10), hence $-K_X\cdot E_i=1$ for all $i$.

A class $D$ in Pic $X$ with $D^2=-1$ and $D\cdot K_X=-1$ is called an \textsl{exceptional class}, and every exceptional class contains exactly one exceptional curve on $X$ (\cite{Man74}, Theorem 26.2). We know exactly what the exceptional classes in Pic $X$ look like: the following proposition is \cite{Man74}, Proposition 26.1.

\begin{proposition}\label{exceptional}
The exceptional classes in Pic $X$ are the classes of the form  $aL-\sum_{i=1}^8b_iE_i$ where $(a,b_1,\ldots,b_8)$ is given by one of the rows of the following table, 
where all $b_i$ can be permuted.
$$\begin{matrix}
a & b_1 & b_2 & b_3 & b_4 & b_5 & b_6 & b_7 & b_8\\
\hline
0 & -1 & 0 & 0 & 0 & 0 & 0 & 0 & 0 \\
1&1&1&0&0&0&0&0&0\\
2&1&1&1&1&1&0&0&0\\
3&2&1&1&1&1&1&1&0\\
4&2&2&2&1&1&1&1&1\\
5&2&2&2&2&2&2&1&1\\
6&3&2&2&2&2&2&2&2\\
\end{matrix}$$
\end{proposition}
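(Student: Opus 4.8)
The plan is to prove the proposition as a purely Diophantine statement, since an exceptional class is \emph{defined} by the two numerical conditions $D^2=-1$ and $D\cdot K_X=-1$, and every element of $\mathrm{Pic}\,X$ is uniquely an integer combination of the basis $\{L,E_1,\ldots,E_8\}$. Writing $D=aL-\sum_{i=1}^8 b_iE_i$ and using the intersection numbers recorded above together with $-K_X=3L-\sum_i E_i$, I would first translate the two conditions into the system
\begin{align*}
\sum_{i=1}^8 b_i &= 3a-1,\\
\sum_{i=1}^8 b_i^2 &= a^2+1.
\end{align*}
An integer tuple $(a,b_1,\ldots,b_8)$ is then an exceptional class if and only if it satisfies this system, so it suffices to enumerate the integer solutions.

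Next I would bound $a$. By the Cauchy--Schwarz inequality $\bigl(\sum_i b_i\bigr)^2\le 8\sum_i b_i^2$, so $(3a-1)^2\le 8(a^2+1)$, which rearranges to $(a-7)(a+1)\le 0$ and hence $-1\le a\le 7$. At the two extreme values equality in Cauchy--Schwarz would be forced, requiring all $b_i$ equal to the common value $(3a-1)/8$, which is not an integer for $a=-1$ or $a=7$; thus these cases are impossible and $0\le a\le 6$.

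For each remaining value of $a$ the solutions are highly constrained. Setting $\mu=(3a-1)/8$ and subtracting the mean gives
\[
\sum_{i=1}^8 (b_i-\mu)^2 \;=\; \sum_i b_i^2 - 8\mu^2 \;=\; \frac{(7-a)(a+1)}{8}\;\le\; 2 ,
\]
so each $b_i$ lies within distance $\sqrt2$ of $\mu$ and can take only a handful of integer values. For each $a\in\{0,\ldots,6\}$ this reduces the problem to a short finite search, and I would check directly that the only multiset $(b_1,\ldots,b_8)$ satisfying both equations is the one listed in the corresponding row of the table; the equations are symmetric in the $b_i$, which is exactly why the $b_i$ may be permuted freely.

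This finite case analysis is the only real work, and two remarks streamline it. First, the map $D\mapsto -2K_X-D$ sends exceptional classes to exceptional classes and, in coordinates, acts by $(a,b_i)\mapsto(6-a,\,2-b_i)$; it interchanges the rows with $a$ and $6-a$ and fixes the $a=3$ row, so only the cases $a=0,1,2,3$ need to be carried out by hand. Second, as a consistency check, summing the number of permutations of the rows, namely $8+28+56+56+56+28+8$, yields $240$, matching the stated number of exceptional curves and confirming that no solution has been overlooked.
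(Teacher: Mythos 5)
Your proposal is correct, but it takes a genuinely different route from the paper: the paper offers no proof of Proposition \ref{exceptional} at all, deferring entirely to \cite{Man74}, Proposition 26.1, whereas you give a self-contained Diophantine argument. Your translation of the defining conditions into the system $\sum_{i=1}^8 b_i=3a-1$, $\sum_{i=1}^8 b_i^2=a^2+1$ is exactly right (the paper defines an exceptional class purely numerically, so nothing geometric is lost), the Cauchy--Schwarz bound $(a-7)(a+1)\le 0$ with the integrality argument at the endpoints is valid, and the variance bound $\sum_i(b_i-\mu)^2=(7-a)(a+1)/8\le 2$ does make each remaining case a short finite check (e.g.\ for $a=3$, counting the multiplicities $n_0,n_1,n_2$ of the values $0,1,2$ gives the linear system $n_0+n_1+n_2=8$, $n_1+2n_2=8$, $n_1+4n_2=10$, with unique solution $(1,6,1)$). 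It is worth noting that your symmetry $(a,b_i)\mapsto(6-a,2-b_i)$, i.e.\ $D\mapsto -2K_X-D$, is precisely the paper's ``partner'' map: one computes $D\cdot(-2K_X-D)=-2\,D\cdot K_X-D^2=3$, so it sends each exceptional class to the unique class meeting it with multiplicity $3$ (Lemma \ref{intersecting}(i)), i.e.\ it is the action of the Bertini involution on $\operatorname{Pic} X$; this gives your halving trick a geometric meaning inside the paper's own framework. What each approach buys: yours makes the classification verifiable from nothing but the intersection numbers of the basis $\{L,E_1,\ldots,E_8\}$ and yields the count $8+28+56+56+56+28+8=240$ as a built-in completeness check, while the paper's citation buys brevity and situates the table within Manin's general theory (Weyl group action, del Pezzo surfaces of all degrees), which the paper relies on elsewhere anyway. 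The only thing to tidy up before your argument is a finished proof is to actually record the small case analyses for $a\in\{0,1,2,3\}$, as sketched above; they are routine.
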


It follows that there are 240 exceptional curves on $X$. 

\begin{remark}\label{configuratie} In 
\cite{Man74}, 26.2, Manin gives a geometrical description of the table in Proposition \ref{exceptional}. An exceptional class of the form $D=aL-\sum_{i=1}^8b_iE_i$, with $(a,b_1,\ldots,b_8)$ a solution given by Proposition \ref{exceptional}, is either one of the $E_i$, where $i\in\{1,\ldots,8\}$ (which is the case if $b_i=-1$), or it corresponds to the class of the strict transform of a curve in $\mathbb{P}^2$ of degree $a$, going through $P_i$ with multiplicity $b_i$ for each~$i$. 
\end{remark}

By $C$ we denote the set of exceptional classes in Pic $X$. Let $U$ be the set $$\{(e_1,e_2,e_3,e_4,e_5,e_6,e_7,e_8)\in C^8\;|\;\forall i\neq j:e_i\cdot e_j=0\}.$$

\begin{lemma}\label{BD}
For $u=(e_1,e_2,e_3,e_4,e_5,e_6,e_7,e_8)\in U$, there exists a morphism $f\colon X\longrightarrow~\mathbb{P}^2$, and points $Q_1,\ldots,Q_8\in \mathbb{P}^2$ that are in general position, such that~$f$ is the blow-up of $\mathbb{P}^2$ at $Q_1,\ldots,Q_8$, and for all $i$, the element $e_i$ corresponds to the class in Pic $X$ of the exceptional curve above $Q_i$. 
\end{lemma}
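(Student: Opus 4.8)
The plan is to produce $f$ by contracting curves directly, rather than by exhibiting a linear system. Write $u=(e_1,\ldots,e_8)$. By Manin's theorem cited above (\cite{Man74}, Theorem 26.2), each exceptional class $e_i$ contains a unique exceptional curve $C_i$, and since $C_i^2=C_i\cdot K_X=-1$ the adjunction formula gives arithmetic genus $0$, so each $C_i$ is a smooth rational $(-1)$-curve. Because the $e_i$ are pairwise orthogonal, for $i\neq j$ the distinct irreducible curves $C_i,C_j$ satisfy $C_i\cdot C_j=e_i\cdot e_j=0$; sharing no component, all their local intersection multiplicities vanish, so $C_1,\ldots,C_8$ are pairwise disjoint.

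First I would contract these eight disjoint $(-1)$-curves. By Castelnuovo's contractibility criterion this yields a birational morphism $f\colon X\to Y$ onto a smooth projective surface $Y$, realizing $f$ as the blow-up of $Y$ at the eight points $Q_i=f(C_i)$, with $e_i$ the class of the exceptional curve above $Q_i$ by construction. In particular $\mathrm{Pic}\,X=f^{*}\mathrm{Pic}\,Y\oplus\bigoplus_i\mathbb{Z}e_i$ and $K_X=f^{*}K_Y+\sum_i e_i$.

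Next I would identify $Y$ with $\mathbb{P}^2$. Since $X$ is rational and $f$ is birational, $Y$ is a smooth rational surface with $\rho(Y)=\rho(X)-8=1$, so $\mathrm{Pic}\,Y=\mathbb{Z}\bar L$ for a generator $\bar L$. From $f^{*}K_Y=K_X-\sum_i e_i=-3L$, where $L:=f^{*}\bar L$, one computes $K_Y^2=9$ and $\bar L^2=1>0$; as the numerical group has rank one, every irreducible curve is numerically a nonnegative multiple of $\bar L$ and hence has nonnegative self-intersection, so $Y$ carries no $(-1)$-curve and is a minimal rational surface. The only minimal rational surface with Picard number one is $\mathbb{P}^2$, and $\bar L$ is then the class of a line. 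The relation $-K_X=3L-\sum_{i=1}^8 e_i$ falls out of the bookkeeping above; note this simultaneously shows that $L=(-K_X+\sum_i e_i)/3$ is automatically integral, so no separate integrality check is needed.

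Finally I would verify that $Q_1,\ldots,Q_8$ are in general position, and this is the step I expect to carry the real content. I would argue by contradiction: a failure of general position produces an effective class $D$ on $X$ with $D^2=-2$ and $D\cdot K_X=0$, namely the strict transform of the offending line ($D=L-e_i-e_j-e_k$), conic ($D=2L-\sum_{6}e_i$), or cubic singular at one point ($D=-K_X-e_j$). In each case $-K_X\cdot D=0$, contradicting the ampleness of $-K_X$, which holds because $X$ is a del Pezzo surface. Hence the points are in general position, completing the proof. The main obstacle is thus not the contraction itself but keeping the numerical bookkeeping straight and invoking ampleness of $-K_X$ at exactly the right place; the identification $Y\cong\mathbb{P}^2$ is routine once the Picard number is computed. (A lattice-theoretic alternative, using transitivity of the Weyl group $W(E_8)$ on $U$ from \cite{WvL}, would still require this same geometric realization step, so I prefer the contraction argument.)
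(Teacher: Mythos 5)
Your proposal is correct, and it follows the same basic strategy as the paper — contract the eight pairwise disjoint $(-1)$-curves by iterated Castelnuovo contractions — but it fills in the two remaining sub-steps with different machinery. The paper stays entirely inside del Pezzo theory: it cites Manin (Cor.\ 24.5.2(i)) that blowing down an exceptional curve on a del Pezzo surface yields a del Pezzo surface of degree one higher, so after eight contractions one lands on a degree-9 del Pezzo surface, i.e.\ $\mathbb{P}^2$; and general position of the images is then a citation to Manin (Thm.\ 24.3(ii)). You instead identify the target as $\mathbb{P}^2$ via $\rho(Y)=1$, minimality, and the classification of minimal rational surfaces, and you prove general position directly: a violating line, conic, or singular cubic gives an effective class $D$ with $-K_X\cdot D=0$ (your three numerical computations are all correct), contradicting ampleness of $-K_X$ by Nakai--Moishezon. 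This is essentially a proof of the Manin result the paper cites, so your argument is more self-contained, at the cost of length; the paper's is shorter but leans on two external results. One presentational wrinkle in your write-up: you assert $f^*K_Y=K_X-\sum_i e_i=-3L$ \emph{before} you have identified $Y\cong\mathbb{P}^2$, which is circular as written — at that stage you only know $K_Y=m\bar L$ with $m^2\bar L^2=9$. Nothing is lost, though, since the facts you actually use there ($K_Y^2=K_X^2+8=9$, hence $\bar L^2>0$) follow from the blow-up formula alone, and the precise values $\bar L^2=1$, $K_Y=-3\bar L$ should simply be recorded after the identification with $\mathbb{P}^2$.
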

\begin{proof}An exceptional curve on a surface $Y$ can be blown down in the sense of Castelnuovo (\cite{Har77}, Theorem V.5.7), and if $Y$ is a del Pezzo surface, the resulting surface is a del Pezzo surface too (\cite{Man74}, Corollary 24.5.2 (i)), of degree one higher than $Y$. Since the $e_i$ are disjoint, after blowing down one of them the remaining ones are exceptional curves on the resulting surface, so we can repeatedly blow all eight of them down. It follows that we obtain a morphism $f\colon X\longrightarrow\mathbb{P}^2$, which is the blow-up in eight points $Q_1,\ldots,Q_8$. Since $X$ is a del Pezzo surface it follows that $Q_1,\ldots,Q_8$ are in general position (\cite{Man74}, Theorem 24.3 (ii)). 
\end{proof}

Let $u=(e_1,\ldots,e_8)$ be an element in $U$. By the previous lemma there exists a morphism $f\colon X\longrightarrow\mathbb{P}^2$, and points $Q_1,\ldots,Q_8\in \mathbb{P}^2$ that are in general position, such that $X$ is isomorphic to the blow-up of $\mathbb{P}^2$ at $Q_1,\ldots,Q_8$, and $e_i$ corresponds to the exceptional curve above $Q_i$ for all $i$. It follows that we have $K_X=-3l+\sum_{i=1}^8e_i$, where $l$ corresponds to the strict transform of a line in $\mathbb{P}^2$ not containing any of the~$Q_i$, and $\{l,e_1,\ldots,e_8\}$ forms a basis for Pic $X$.

\begin{remark}\label{bijectionblowdown}
Let $A$ be the set of 240 vectors $(a,b_1,\ldots,b_8)$  that are in the table in Proposition \ref{exceptional} (where the $b_i$ can be permuted). We have a map $$f\colon U\longrightarrow \mbox{Hom}_{\mbox{Set}}(C,A)$$ as follows. Given $u=(e_1,\ldots,e_8)\in U$, let $l$ be the unique element such that $K_X=-3l+\sum_{i=1}^8e_i$. Then we define $f(u)$ as follows. $$f(u)\colon C\longrightarrow A,\;e\longmapsto(e\cdot l,e\cdot e_1,\ldots,e\cdot e_8).$$ The map $f(u)$ is a bijection with inverse $f(u)^{-1}((a,b_1,\ldots,b_8))=al-\sum_{i=1}^8b_ie_i\in C$. Therefore, every element of $U$ gives rise to a bijection between $C$ and $A$.
\end{remark}

Since there is a one-to-one correspondence between $C$ and the set of exceptional curves on $X$, we study the intersection of exceptional curves by studying how elements in $C$ intersect. We do this by constructing a weighed graph on the set $C$.

\begin{definition}
By a \textsl{graph} we mean a pair $(V,D)$, where $V$ is a set of elements called \textsl{vertices}, and $D$ a subset of the powerset of $V$ of which every element has cardinality 2; elements in $D$ are called \textsl{edges}, and the \textsl{size} of the graph is the cardinality of $V$. By a \textsl{weighted graph} we mean a graph $(V,D)$ together with a map $\psi\colon D\longrightarrow A$, where $A$ is any set, whose elements we call \textsl{weights}; for an element $d\in D$ we call $\psi(d)$ its weight. If $(V,D)$ is a weighted graph with weight function~$\psi$, then we define a \textsl{weighted subgraph} of $(V,D)$ to be a graph $(V',D')$ with map $\psi'$, where $V'$ is a subset of $V$, while $D'$ is a subset of the intersection of $D$ with the powerset of $V'$, and $\psi'$ is the restriction of $\psi$ to $D'$. A \textsl{clique} of a weighted graph is a complete weighted subgraph.\end{definition}

By $G$ we denote the complete weighted graph whose vertex set is $C$, and where the weight function is the intersection pairing in Pic $X$. 

\vspace{11pt}

When two exceptional curves intersect in a point on $X$, their corresponding classes in Pic $X$ are connected by an edge of positive weight in $G$. Therefore, an upper bound on the number of exceptional curves on $X$ that are concurrent in a point is given by the maximal size of cliques in $G$ that have only edges of positive weight. To study these cliques, we use the fact there is a one-to-one correspondence between the set~$C$ and the root system $E_8$. We describe this correspondence here.

\vspace{11pt}

Recall that we denote the intersection pairing in Pic $X$ with a dot. Let $\langle\cdot,\cdot\rangle$ be the negative of this intersection pairing on Pic $X$. Then $\langle\cdot,\cdot\rangle$ on $\mathbb{R}\otimes_{\mathbb{Z}}\mbox{Pic }X$ induces the structure of a Euclidean space on the orthogonal complement $K_X^{\perp}$ of the class of the canonical divisor, and with this structure, the set $$E=\{D\in\mbox{Pic }X\;|\;\langle D,D\rangle=2;\;D\cdot K_X=0\}$$ is a root system of type $E_8$ in $K_X^{\perp}$ (\cite{Man74}, Theorem 23.9). The root system $E$ has a system of simple roots given by $E_1-E_2,\;E_2-E_3,\ldots,E_7-E_8,\;L-E_1-E_2-E_3$ (\cite{Man74}, Proposition 25.5.6 (i)). For $c\in C$ we have $c+K_X\in K_X^{\perp}$ and $\langle c+K_X,c+K_X\rangle=2$, and this gives a bijection between $C$ and $E$, sending an exceptional class $c$ to the root $c+K_X$ in~$E$. For $c_1,c_2\in C$ we have $\langle c_1+K_X,c_2+K_X\rangle=1-c_1\cdot c_2$. As a consequence of this bijection, the group of permutations of $C$ that preserve the intersection pairing is isomorphic to the Weyl group $W_8$, which is the group of permutations of $E_8$ generated by the reflections in the hyperplanes orthogonal to the roots (\cite{Man74}, Theorem 23.9). 

\vspace{11pt}

In \cite{WvL}, we studied a complete weighted graph $\Gamma$ which has as vertex set the set of roots in $E$, and weight function the inner product $\langle\cdot,\cdot\rangle$. From the correspondence between $C$ and $E$ it follows that there is bijection between $G$ and $\Gamma$, that sends a vertex $c$ in $G$ to the corresponding vertex $c+K_X$ in $\Gamma$, and an edge $d=\{c_1,c_2\}$ in~$G$ with weight~$w$ to the edge $\delta=\{c_1+K_X,c_2+K_X\}$ in $\Gamma$ with weight $1-w$. The different weights that occur in $G$ are $0,1,2,$ and $3$, and they correspond to weights $1,0,-1,$ and $-2$, respectively, in $\Gamma$. 

\vspace{11pt}

Using the relation between the exceptional classes and the root system $E$, we state some results about $G$. 
 
\begin{lemma}\label{intersecting}
\begin{itemize}
\item[](i) Let $e$ be an exceptional class. Then there is exactly one exceptional class $f$ with $e\cdot f=3$, there are 56 exceptional classes $f$ with $e\cdot f=0$, there are 126 exceptional classes $f$ with $e\cdot f=1$, and 56 exceptional classes $f$ with $e\cdot f=2$. 
\item[](ii) For two exceptional classes $e_1,e_2$ with $e_1\cdot e_2= 2$, there is a unique exceptional class $f$ such that $e_1\cdot f=e_2\cdot f=2$.  
\item[](iii) For every pair $e_1,e_2$ of exceptional classes such that $e_1\cdot e_2=1$, there are exactly~60 exceptional classes $f$ with $e_1\cdot f =e_2\cdot f=1$, and 32 exceptional classes~$f$ with $e_1\cdot f=1$ and $e_2\cdot f=0$. 
\item[](iv) For $e_1,e_2$ two exceptional classes with $e_1\cdot e_2=3$, and $f$ a third exceptional class, we have $e_1\cdot f=1$ if and only if $e_2\cdot f=1$, and $e_1\cdot f=0$ if and only if $e_2\cdot f=2$. 
\end{itemize}
\end{lemma}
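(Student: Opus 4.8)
The plan is to translate the entire lemma into a statement about the root system $E$ of type $E_8$ via the bijection $c\mapsto c+K_X$ recalled above, and then read off each assertion from the geometry of $E_8$. Writing $r_i=e_i+K_X$ and $s=f+K_X$ for the roots attached to the exceptional classes, the identity $\langle c_1+K_X,c_2+K_X\rangle=1-c_1\cdot c_2$ converts intersection numbers into inner products: the values $e\cdot f=3,2,1,0$ become $\langle r,s\rangle=-2,-1,0,1$ respectively, while $e=f$ corresponds to $\langle r,s\rangle=2$. Since $\langle\cdot,\cdot\rangle$ is positive definite on $K_X^{\perp}$, all roots have norm $2$, and $E$ spans an even integral lattice, the inner product of two distinct roots lies in $\{-2,-1,0,1\}$, with the value $-2$ forcing $s=-r$. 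This last fact already gives the first claim of (i): the unique $f$ with $e\cdot f=3$ is the class with $f+K_X=-(e+K_X)$.

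For the remaining counts in (i) I would fix a root $r$ and record the distribution of the $240$ roots by their inner product with $r$. The involution $s\mapsto -s$ sends roots to roots and negates inner products, so it exchanges the roots with $\langle r,s\rangle=1$ and those with $\langle r,s\rangle=-1$; hence these two classes have equal size. The roots orthogonal to $r$ form a sub-root-system of type $E_7$, which has $126$ roots, so $126$ classes $f$ satisfy $e\cdot f=1$. Subtracting the antipodal pair $\pm r$ and these $126$ from the total $240$ leaves $112$ roots split evenly, i.e.\ $56$ with $\langle r,s\rangle=1$ (so $e\cdot f=0$) and $56$ with $\langle r,s\rangle=-1$ (so $e\cdot f=2$). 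The only external input here is $\lvert E_7\rvert=126$, which one may cite from the standard theory of root systems, take from \cite{WvL}, or verify directly from the list in Proposition \ref{exceptional}; the $W_8$-transitivity on roots guarantees the counts are independent of the chosen $r$.

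Parts (ii) and (iv) need no enumeration and follow from linear algebra in the Euclidean space $K_X^{\perp}$. For (ii), suppose $\langle r_1,r_2\rangle=-1$ and $s$ is a root with $\langle r_1,s\rangle=\langle r_2,s\rangle=-1$. Expanding $\langle r_1+r_2+s,\,r_1+r_2+s\rangle$ gives $2+2+2+2(-1)+2(-1)+2(-1)=0$, so positive definiteness forces $s=-(r_1+r_2)$; conversely $-(r_1+r_2)$ is a root with the required inner products, which proves both existence and uniqueness of the corresponding $f$. For (iv), the hypothesis $e_1\cdot e_2=3$ means $r_2=-r_1$, so $\langle r_2,s\rangle=-\langle r_1,s\rangle$ for every root $s$; then $e_1\cdot f=1\Leftrightarrow\langle r_1,s\rangle=0\Leftrightarrow\langle r_2,s\rangle=0\Leftrightarrow e_2\cdot f=1$, and $e_1\cdot f=0\Leftrightarrow\langle r_1,s\rangle=1\Leftrightarrow\langle r_2,s\rangle=-1\Leftrightarrow e_2\cdot f=2$.

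Finally, part (iii) is proved by iterating the orthogonal-complement description. Here $e_1\cdot e_2=1$ means $r_1,r_2$ are orthogonal roots, and the roots orthogonal to both span a sub-root-system of type $D_6$, which has $60$ roots; these give the $60$ classes $f$ with $e_1\cdot f=e_2\cdot f=1$. Among the $126$ roots orthogonal to $r_1$ (the copy of $E_7$ from (i)), the pair $\pm r_2$ accounts for $2$, the $D_6$ accounts for $60$, and the remaining $64$ split under $s\mapsto -s$ into $32$ with $\langle r_2,s\rangle=1$ and $32$ with $\langle r_2,s\rangle=-1$; the former are exactly the $32$ classes with $e_1\cdot f=1$ and $e_2\cdot f=0$. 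I expect the main obstacle throughout to be pinning down the root-system data $\lvert E_7\rvert=126$ and $\lvert D_6\rvert=60$ and, more subtly, justifying that the relevant orthogonal complements are genuinely of type $E_7$ and $D_6$; these facts are standard but must be cited or checked, whereas (ii) and (iv) come essentially for free once the translation to $E_8$ is in place.
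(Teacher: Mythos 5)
Your proof is correct, and it is more self-contained than the paper's. The paper proves this lemma essentially by citation: after recalling the same translation $\langle c_1+K_X,c_2+K_X\rangle=1-c_1\cdot c_2$, it refers (i) to Proposition 2.2, (ii) to Lemma 3.8, and (iii) to Lemmas 3.26, 3.11 and 3.12 of \cite{WvL}, and proves (iv) by the same antipodality observation you use ($e_1\cdot e_2=3$ forces $r_2=-r_1$). Where you differ is that you replace those citations by classical root-system arguments: positive definiteness of $\langle\cdot,\cdot\rangle$ on $K_X^{\perp}$ forces $s=-(r_1+r_2)$ in (ii), and the counts in (i) and (iii) follow from the facts that the roots of $E_8$ orthogonal to a fixed root form an $E_7$ ($126$ roots), those orthogonal to a further orthogonal root form a $D_6$ ($60$ roots), and the involution $s\mapsto-s$ splits the remaining roots evenly. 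One small simplification available to you: for existence in (ii) you do not even need the standard fact that the sum of two roots at $120^\circ$ is a root, since $-(r_1+r_2)$ has norm $2+2+2(-1)=2$, lies in $\mbox{Pic }X$, and is orthogonal to $K_X$, so it belongs to $E$ directly by the paper's definition of $E$ as the set of norm-$2$ lattice vectors in $K_X^{\perp}$. Your approach buys a proof that is checkable without consulting \cite{WvL}, at the cost of importing the standard identifications of the orthogonal complements ($A_1^{\perp}\cong E_7$ in $E_8$, $A_1^{\perp}\cong D_6$ in $E_7$, with $|E_7|=126$ and $|D_6|=60$), which, as you rightly flag, must be cited or verified against the table in Proposition 2.2; the paper's route is shorter and keeps all such combinatorial facts about the $E_8$ graph in a single external reference.
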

\begin{proof}
This is all in \cite{WvL}, using the fact that two exceptional classes have intersection pairing $a$ if and only if their corresponding roots in $E$ have inner product~$1-a$; (i) is Proposition 2.2, (ii) is Lemma 3.8, and (iii) is Lemma 3.26 and Lemmas 3.11 and 3.12. Finally, (iv) follows from the fact that two classes $e_1,e_2$ with $e_1\cdot e_2=3$ correspond to two roots in $E$ with inner product $-2$, which implies they are each other's inverse as vectors (Proposition 2.2 in \cite{WvL}).
\end{proof}

We also obtain a first upper bound for the number of exceptional curves that are concurrent in a point on $X$.

\begin{proposition}\label{bovengrens}
The number of exceptional curves that are concurrent in a point on $X$ is at most~16.
\end{proposition}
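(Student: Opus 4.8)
The plan is to translate the statement into Euclidean geometry through the bijection between $C$ and the root system $E$ recalled above, and then to apply a general linear-algebra bound on the number of vectors in a Euclidean space that are pairwise non-acute.

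First I would observe that if a set of exceptional curves is concurrent, then any two of them are distinct irreducible curves sharing a common point, so their intersection number on $X$ is at least $1$; hence the corresponding exceptional classes $c_1,c_2\in C$ satisfy $c_1\cdot c_2\geq 1$. Under the bijection $c\mapsto c+K_X$ from $C$ to $E$, we then have $\langle c_1+K_X,c_2+K_X\rangle = 1-c_1\cdot c_2\leq 0$. Thus a set of $m$ concurrent exceptional curves yields a set of $m$ roots in $E\subset K_X^{\perp}$ that are pairwise at non-positive inner product. Since $K_X^2=1$, the class $K_X$ spans a non-degenerate line in $\mathbb{R}\otimes_{\mathbb{Z}}\mbox{Pic }X$, so its orthogonal complement $K_X^{\perp}$ is an $8$-dimensional Euclidean space.

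Next I would establish the key lemma: a collection of nonzero vectors in an $n$-dimensional Euclidean space that are pairwise at non-positive inner product has at most $2n$ elements. I would prove this by induction on $n$, the case $n=1$ being clear since at most one vector can point in each of the two directions on a line. For the inductive step, set aside one vector $v$. At most one of the remaining vectors is a scalar multiple of $v$, and such a multiple is necessarily negative, because two negative multiples of $v$ would have strictly positive inner product. Projecting every other vector $v_i$ orthogonally onto $v^{\perp}\cong\mathbb{R}^{n-1}$ preserves the non-positivity of the pairwise inner products, since the correction term $-\langle v_i,v\rangle\langle v_j,v\rangle/\langle v,v\rangle$ is non-positive once $\langle v_i,v\rangle,\langle v_j,v\rangle\leq 0$; moreover these projections are nonzero except for the at most one vector parallel to $v$. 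By the inductive hypothesis there are at most $2(n-1)$ of these nonzero projections, so removing $v$ and the at most one parallel vector gives $m-2\leq 2(n-1)$, whence $m\leq 2n$.

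Applying the lemma with $n=8$ bounds the number of roots, and hence the number of concurrent exceptional curves, by $16$, which is the assertion. The step requiring the most care is the bookkeeping in the inductive argument, namely checking that orthogonal projection leaves at most one vector at the origin and that it never turns a non-positive inner product into a positive one; once the lemma is in place, the reduction through the $E_8$ correspondence is immediate.
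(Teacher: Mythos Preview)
Your proof is correct. The translation step---passing from concurrent exceptional curves to roots in the $8$-dimensional Euclidean space $K_X^{\perp}$ that are pairwise at non-positive inner product---is exactly what the paper does. The paper, however, does not prove the resulting bound itself: it simply cites \cite{WvL}, Appendix~A, for the statement that cliques in $\Gamma$ with edge-weights in $\{-2,-1,0\}$ have size at most~$16$. You instead supply a direct, self-contained argument via the classical fact that an $n$-dimensional Euclidean space admits at most $2n$ nonzero vectors with pairwise non-positive inner products, proved by a clean induction with orthogonal projection. This buys you independence from the external reference and makes the bound transparent; the paper's route buys brevity and consistency with the broader machinery in \cite{WvL} used elsewhere in the article. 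One very small point of bookkeeping: when no remaining vector is parallel to $v$ you actually get $m-1\leq 2(n-1)$, which is stronger; your stated inequality $m-2\leq 2(n-1)$ covers only the worst case, but that is of course all that is needed.
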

\begin{proof}
Cliques with edges of positive weight in $G$ correspond to cliques with edges of weights $-2,-1,0$ in $\Gamma$. The maximal size of such cliques in $\Gamma$ is 16 by \cite{WvL}, Appendix A.
\end{proof}

\begin{definition}
For an exceptional class $e$ in Pic $X$, we call the unique exceptional class $e'$ with $e\cdot e'=3$ its \textsl{partner}.
\end{definition}

The graph below summarizes Lemma \ref{intersecting}. Vertices are exceptional classes, and the number in a subset is its cardinality. The number on an edge between two subsets is the intersection pairing of two classes, one from each subset. For $i,j\in\{1,2,3\}$, the exceptional class $e_i'$ is the partner of the class $e_i$, and for $e_i\cdot e_j=2$, the class $e_{i,j}$ is the unique one that intersects both $e_i$ and $e_j$ with multiplicity 2. 

\begin{center}
\begin{tikzpicture}\label{graph}
\node[label=above right:$e_1$,draw,circle,fill,inner sep=0pt,minimum size=4pt](E1) at (5,5){};
\node[label=above:$e_1'$,draw,circle,fill,inner sep=0pt,minimum size=4pt](minE1) at (5,7) {};
\node[draw,circle,inner sep=1pt] (A) at (5,1.4) {
    \begin{tikzpicture} 
	  \node (a1) at (0,0) {126};
      \node [label=above:$e_2$,draw,circle,fill,inner sep=0pt,minimum size=4pt](E2) at (0,-1){};
      \node [label=below:$e_2'$,draw,circle,fill,inner sep=0pt,minimum size=4pt](minE2) at (0,-2.5){};
      \node [draw,circle] (a2) at (1,-1.75) {60};
      \node [draw,circle] (a3) at (-1.2,-0.7) {32};
      \node [draw,circle] (a4) at (-1.2,-2.8) {32};
      \path[every node/.style={}] 
       (E2) edge node [midway,above] {1} (a2)
       (E2) edge node [midway,right] {3} (minE2)
       (E2) edge node [midway,above] {0} (a3)
       (minE2) edge node [midway,below] {0} (a4)
	   (E2) edge node [midway,below] {2} (a4)
       (minE2) edge node [midway,above] {2} (a3)       
       (minE2) edge node [midway,below] {1} (a2);
     \end{tikzpicture}};
\node[draw,circle,inner sep=0pt] (B) at (8,5) {
  \begin{tikzpicture} 
     \node (b1) at (0,0) {56};
     \node [label=below:$e_3$,draw,circle,fill,inner sep=0pt,minimum size=4pt](minE3) at (-0.8,-0.8) {};
     \node [label={[xshift=0cm, yshift=-0.78 cm]$e_{1,3}$},draw,circle,fill,inner sep=0pt,minimum size=4pt](minE1E3) at (0.8,-0.8) {};
     \path[every node/.style={}] 
       (minE3) edge node [midway,above] {2} (minE1E3);
   \end{tikzpicture}};
 \node[draw,circle,inner sep=2pt] (C) at (2,5) {
   \begin{tikzpicture} 
     \node (c1) at (0,0) {56};
     \node [label=below:$e_3'$,draw,circle,fill,inner sep=0pt,minimum size=4pt](E3) at (0,-0.5) {};
   \end{tikzpicture}
    };
   \path[every node/.style={font=\sffamily\small}]
     (E1) edge node [midway,left]{3} (minE1)
     (E1) edge node [midway,below]{2} (B)
     (E1) edge node [midway,left]{1} (A)
     (E1) edge node [midway,above]{0} (C);
\end{tikzpicture}
\end{center}

\vspace{11pt} 

Let $\varphi$ be the morphism associated to the linear system $|-2K_X|$, which realizes $X$ as a double cover of a cone $Q$ in $\mathbb{P}^3$. We want to distinguish cliques in $G$ corresponding to exceptional curves that intersect in a point on the ramification curve of $\varphi$ from those intersecting in a point outside the ramification curve of $\varphi$. To this end we use Proposition \ref{cor}.
 
\begin{proposition}\label{cor} $ $
\begin{itemize}
\item[](i) If $e$ is an exceptional curve on $X$, then $\varphi(e)$ is a smooth conic, the intersection of $Q$ with a plane in $\mathbb{P}^3$ not containing the vertex of $Q$. Moreover $\varphi|_e\colon e\longrightarrow\varphi(e)$ is one-to-one. 
\item[](ii) If $H$ is a hyperplane section of $Q$ not containing the vertex of $Q$, then $\varphi^*H$ has an exceptional curve as component if and only if it has at least three (maybe infinitely near) singular points. If this is the case, then $\varphi^*H=e_1+e_2$ with $e_1,\;e_2$ exceptional curves, and $e_1\cdot e_2=3$. Every exceptional curve arises this way. 
\end{itemize}
\end{proposition}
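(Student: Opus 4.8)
The plan is to use that $\varphi$ is induced by $|-2K_X|$, so the pullback $\varphi^*\mathcal{O}_{\mathbb{P}^3}(1)$ of a hyperplane class equals $-2K_X$, together with the numerical data of Lemma~\ref{intersecting} and standard surface techniques (adjunction, Riemann--Roch, the notion of partner), kept characteristic-free throughout.

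For part (i), fix an exceptional curve $e$, so $e\cong\mathbb{P}^1$ and $(-2K_X)|_e$ has degree $e\cdot(-2K_X)=-2(e\cdot K_X)=2$. The first step is to show that the restriction map $H^0(X,-2K_X)\to H^0(e,\mathcal{O}_{\mathbb{P}^1}(2))$ is surjective. Its kernel is $H^0(X,-2K_X-e)$, and since $-2K_X-e$ is exactly the partner class $e'$ of $e$ (it is exceptional, with $e\cdot(-2K_X-e)=-2(e\cdot K_X)-e^2=3$), a short Riemann--Roch computation gives $H^1(X,\mathcal{O}_X(e'))=0$, whence surjectivity. Thus $\varphi|_e$ is given by the complete linear system $|\mathcal{O}_{\mathbb{P}^1}(2)|$, i.e.\ the $2$-uple embedding, and is an isomorphism onto a smooth conic spanning a plane $\Pi$. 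Finally $\varphi(e)\subseteq Q\cap\Pi$, and as both are curves of degree $2$ they coincide; were $\Pi$ to pass through the vertex of $Q$, the section $Q\cap\Pi$ would be a pair of lines and hence singular, so $\Pi$ misses the vertex.

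For part (ii), set $D:=\varphi^*H\in|-2K_X|$; by adjunction $2p_a(D)-2=(-2K_X)\cdot(-K_X)=2$, so $p_a(D)=2$. Moreover $D$ is reduced, since otherwise $\varphi$ would be ramified along a component of $D$, forcing that component's image $H$ to be a component of the branch curve, which is impossible as the latter is irreducible of degree $6$ while $H$ has degree $2$. Suppose first that $D$ has at least three singular points, counting infinitely near ones. If $D$ were irreducible, its total $\delta$-invariant would be $p_a(D)-p_g(D)\le2$, forcing at most two singular points even when counted with infinitely near points; hence $D$ is reducible. As a reduced, degree-$2$ finite cover of $H\cong\mathbb{P}^1$, it then splits into two components $e_1,e_2$, each mapping isomorphically onto $H$ and so each $\cong\mathbb{P}^1$. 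Since $\varphi|_{e_i}$ is an isomorphism onto the degree-$2$ conic $H$ we get $e_i\cdot(-2K_X)=2$, so $e_i\cdot K_X=-1$, and adjunction with $p_a(e_i)=0$ yields $e_i^2=-1$: both components are exceptional. Conversely, if $D$ has an exceptional component $e_1$, then $D=e_1+R$ with $R\sim-2K_X-e_1=e_1'$, the partner of $e_1$; as $e_1'$ is an exceptional class its only effective representative is the partner curve, so $R=e_1'$ and $D=e_1+e_1'$. In either case $D=e_1+e_2$ with $e_1\cdot e_2=-2(e_1\cdot K_X)-e_1^2=3$, so $e_1,e_2$ are partners; being smooth rational curves meeting in a $0$-cycle of degree $3$, they make $D$ singular at exactly these (possibly infinitely near) three points. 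This establishes the equivalence, and the final assertion follows by taking $H=\varphi(e)$ in part (i), which is a hyperplane section missing the vertex with $\varphi^*H=e+e'$.

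The step I expect to be the main obstacle is the careful accounting of singular points in part (ii), in particular controlling the count when the three intersection points of $e_1$ and $e_2$ collide into infinitely near points, and guaranteeing that the whole argument is uniform in every characteristic. The convenient device of writing $D\to H\cong\mathbb{P}^1$ as $y^2=f(x)$ with $\deg f=6$ and detecting splitting from $f$ being a perfect square is clean only away from characteristic $2$; to retain the characteristic-$2$ case needed for Theorem~\ref{thm1} I deliberately route the proof through the intrinsic arithmetic-genus and degree computations above, which never use separability of $\varphi$.
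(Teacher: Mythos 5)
Your proof is correct, but it takes a genuinely different route from the paper, because the paper does not prove Proposition \ref{cor} at all: its entire proof is a citation of \cite{CO99}, Proposition 2.6 and Key-lemma 2.7. Your argument is self-contained and, as far as I can check, sound in every characteristic: in part (i) the surjectivity of $H^0(X,-2K_X)\to H^0(e,\mathcal{O}_{\mathbb{P}^1}(2))$ via $H^1(X,\mathcal{O}_X(e'))=0$ (where $e'=-2K_X-e$ is the partner class; Riemann--Roch plus $h^0(e')=1$, $h^2(e')=0$) is exactly the right mechanism, and in part (ii) the chain adjunction $\Rightarrow p_a(\varphi^*H)=2$, reducedness, the $\delta$-invariant bound forcing reducibility once there are three or more (infinitely near) singular points, and the identification of the residual component with the partner via $-2K_X-e_1\sim e_1'$ all go through; the count of infinitely near singular points of $e_1+e_2$ as exactly $e_1\cdot e_2=3$ is also correct. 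What the paper's citation buys is brevity and the outsourcing of the delicate characteristic-$2$ facts (separability of $\varphi$, smoothness of the branch sextic) to \cite{CO99}; what your proof buys is that the proposition becomes internal to the paper, expressed in the same partner/intersection formalism as Lemma \ref{intersecting} and Section \ref{exccurves}. One spot where you lean on the paper's standing hypotheses without saying so: your reducedness argument needs the branch curve to be \emph{irreducible} of degree $6$, which does follow from the paper's setup (a smooth curve cut out on $Q$ by a cubic is a smooth $(2,3)$ complete intersection, hence connected, hence irreducible), but you assert it without justification. If you prefer to avoid the branch curve entirely, non-reducedness can be excluded numerically: $\varphi^*H=2C$ would force $C^2=1$ and $C\cdot K_X=-1$, hence $p_a(C)=1$, contradicting that the degree-$1$ finite map $C\to H\cong\mathbb{P}^1$ is an isomorphism.
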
\begin{proof}
\cite{CO99}, Proposition 2.6 and Key-lemma 2.7.
\end{proof}
  
\begin{remark}\label{rem} Let $e$ be an exceptional curve on $X$, and let $e'$ be its partner. Let $H$ be a hyperplane section of $Q$ with $\varphi^*H=e+e'$, which exists by Proposition \ref{cor} (ii). Since~$\varphi|_{f}$ is one-to-one for $f\in\{e,e'\}$ by part (i) of the same proposition, it follows that $\varphi(e)=\varphi(e')=H$. So every point on $H$ has two preimages under $\varphi$, except for the points with a preimage in $e\cap e'$. We conclude that the points where $e$ intersects the ramification curve of $\varphi$ are exactly the points in $e\cap e'$, hence are also contained in $e'$. Conversely, if a set of exceptional curves is concurrent in a point $P$, and this set contains an exceptional curve and its partner, then $P$ lies on the ramification curve of $\varphi$. 
\end{remark}

\section{Proof of Theorem \ref{thm1}}\label{ponthecurve}

In this section we prove Theorem \ref{thm1}. We first determine which cliques in~$G$ may correspond to sets of exceptional curves intersecting on the ramification curve of~$\varphi$ (Remark~\ref{partners}). We then show that the automorphism group of $G$ acts transitively on certain cliques of that form (Proposition~\ref{transitief}), which allows us to reduce to specific curves on $X$. In Proposition~\ref{key1}, which is key to the proof of Theorem \ref{thm1}, we show that seven specific curves are not concurrent. 

\begin{remark}\label{12upperbound}From Remark \ref{rem} it follows that there is a bijection between planes in $\mathbb{P}^3$ that are tritangent to the branch curve of $\varphi$ and do not contain the vertex of~$Q$, and pairs of exceptional curves $e_1,e_2$ with $e_1\cdot e_2=3$. Using this, we can find an upper bound for the number of exceptional curves that are concurrent in a point on the ramification curve. Let $P$ be a point on the branch curve of $\varphi$. From Lemma~4.5 in \cite{TVAV09}, it follows that over a field of characteristic unequal to 2, there are at most 7 planes that are tangent to the branch curve at $P$ and two other points. Moreover, Niels Lubbes gave us the insight that exactly one of those planes contains the vertex of $Q$, so we find an upper bound of 6 planes that are tritangent to the branch curve, that contain $P$, and that do not contain the vertex of $Q$. This gives an upper bound of 12 exceptional curves that contain the point $\varphi^{-1}(P)$ on the ramification curve of $\varphi$, if 
char $k\neq2$. 
\end{remark}

\begin{remark}\label{partners}From Remark \ref{rem} it follows that a maximal set of exceptional curves that are concurrent in a point on the ramification curve consists of exceptional curves and their partners, hence has even size. Moreover, since the weights in such a clique are positive, from Lemma \ref{intersecting} (iv) it follows that such a clique only has edges of weights 1 and 3. We conclude that all cliques in $G$ corresponding to a maximal set of exceptional curves that are concurrent in a point on the ramification curve are of the form $$K_n=\left\{\{e_1,\ldots,e_n,e_1',\dots,e_n'\}\;\left|\begin{array}{c}\forall i:\; e_i,e_i'\in C;\;e_i\mbox{ is the partner of }e_i';\\ \forall i\neq j: e_i\cdot e_j=e_i\cdot e_j'=e_i'\cdot e_j'=1
\end{array}\right.\right\}.$$ \end{remark}

Let $W$ be the group of permutations of $C$ that preserve the intersection pairing, and recall that $W$ is isomorphic to the Weyl group of the $E_8$ root system.

\begin{proposition}\label{transitief}
For $n\in\{2,3,5,6,7,8\}$, the group $W$ acts transitively on the set $K_n$.
\end{proposition}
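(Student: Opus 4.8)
The plan is to translate the statement into the root system $E$ and to recognize $K_n$ as the set of sub-root-systems of type $nA_1$. Recall the bijection $c\mapsto c+K_X$ between $C$ and $E$, under which $\langle c_1+K_X,c_2+K_X\rangle=1-c_1\cdot c_2$. An exceptional class $e$ and its partner $c$ satisfy $e\cdot c=3$, so their roots $r:=e+K_X$ and $c+K_X=-r$ are opposite (inner product $-2$), while for $i\neq j$ the conditions $e_i\cdot e_j=e_i\cdot c_j=c_i\cdot c_j=1$ all say that the corresponding roots are orthogonal. Hence a clique in $K_n$ corresponds bijectively to a set $\{\pm r_1,\ldots,\pm r_n\}$ of $n$ pairwise orthogonal root-axes, that is, to a subsystem of $E$ of type $nA_1$; since $W\cong W(E_8)$ acts on $E$ preserving inner products, transitivity of $W$ on $K_n$ is equivalent to transitivity of the Weyl group $W(E_8)$ on its sub-root-systems of type $nA_1$.

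To prove this I would argue by induction on $n$ using orthogonal complements. The base case is transitivity of $W(E_8)$ on the $240$ roots, which is the classical $n=1$ instance. For the inductive step, fix one axis $\{\pm r_1\}$ of an $nA_1$ subsystem; by Lemma \ref{intersecting}(i) there are exactly $126$ roots orthogonal to $r_1$, and these form a subsystem of type $E_7$, inside which the remaining $n-1$ axes lie. The stabilizer of $\{\pm r_1\}$ in $W(E_8)$ contains the reflection $s_{r_1}$ together with the pointwise stabilizer $W(E_7)$ of $r_1$, and it induces the full Weyl group of this $E_7$ on the orthogonal roots. Thus transitivity of $W(E_8)$ on $nA_1$ subsystems of $E_8$ follows from transitivity of $W(E_7)$ on $(n-1)A_1$ subsystems of $E_7$. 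Iterating, the orthogonal complement of two orthogonal roots is a $D_6$ (the $60$ roots of Lemma \ref{intersecting}(iii)), and so on down a chain $E_8\supset E_7\supset D_6\supset\cdots$, reducing everything to transitivity on single roots in successively smaller systems.

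The main obstacle is that this reduction does not by itself guarantee a single orbit: a root system may carry two $W$-orbits of orthogonal root-sets of a given size. Indeed $W(E_8)$ has two orbits on subsystems of type $4A_1$ --- according to whether the four axes lie in a common $D_4$ --- which is precisely why $n=4$ is absent from the statement, and the analogous splitting occurs for $3A_1$ inside $E_7$. The induction therefore cannot be applied blindly: wherever the complement fails to be transitive one must exhibit an element of the larger Weyl group that fuses the smaller orbits. For the admissible values $n\in\{2,3,5,6,7,8\}$ I would confirm a single orbit by one of two means. Either carry the complement chain far enough and settle the remaining low-rank cases ($A_k$, $D_k$, $E_6$) directly, tracking at each $D_4$-related branching a reflection that mixes the axes and collapses the two potential orbits into one. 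Or, more robustly, argue by counting: determine the total number $N_n$ of $nA_1$ subsystems (a straightforward chain count using the intersection data of Lemma \ref{intersecting}), compute for a fixed standard representative the orbit size $|W(E_8)|/|\mathrm{Stab}|$ with $|W(E_8)|=696729600$, and verify that it equals $N_n$.

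The heart of the argument, and the place where genuine care is needed, is thus the orbit-fusion bookkeeping around the $D_4$-type splittings: establishing that the configurations collapse to a single $W(E_8)$-orbit for exactly the values $n\neq 4$. Once that is in hand, the translation of the first paragraph converts the root-system statement back into the claimed transitivity of $W$ on each $K_n$.
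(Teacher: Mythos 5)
Your reduction is the right one, and your structural observations are accurate: under $c\mapsto c+K_X$ the cliques in $K_n$ correspond exactly to $nA_1$ subsystems of the root system $E$ (partners become opposite roots, intersection number $1$ becomes orthogonality), the orthogonal-complement chain $E_8\supset E_7\supset D_6\supset A_1+D_4$ is correct, and the exclusion of $n=4$ is indeed explained by the two $W(E_8)$-orbits of $4A_1$ subsystems (equivalently, by the splitting further down the chain: two orbits of $3A_1$ in $E_7$, of $2A_1$ in $D_6$). The problem is that your text stops exactly where the proof has to start. The inductive scheme alone settles $n=2,3$, since there the induced complement Weyl groups ($W(E_7)$ on its $126$ roots, then $W(D_6)$ on its $60$ roots) act transitively on single roots. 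But for $n\in\{5,6,7,8\}$ the induction necessarily passes through the non-transitive layers, and you never resolve them: you write that you ``would'' either chase explicit fusing elements through the $D_4$-branchings, or count $nA_1$ subsystems and compare with $|W(E_8)|/|\mathrm{Stab}|$ --- and you carry out neither. This is not a formality. For instance, a $5A_1$ such as $\{e_1\pm e_2,\,e_3\pm e_4,\,e_5-e_6\}$ (in standard $E_8$ coordinates) contains $4A_1$'s of both orbit types, so one must exhibit an element of $W(E_8)$ lying outside the stabilizer chain that merges the two $4A_1$ orbits after extension, or else complete the count; and the counting route needs orbit-refined data (e.g.\ how many roots are orthogonal to a $4A_1$ of each type), which Lemma \ref{intersecting} does not supply. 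The entire content of the proposition sits in this deferred bookkeeping, so what you have is a correct translation plus a plan, not a proof.

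For context, the paper does not prove the statement internally either: its proof is a citation of Proposition 5.10 of \cite{WvL}, where the transitivity is established by a direct analysis of the weighted graph $\Gamma$ on the $240$ roots. Your root-system approach, once the fusion or counting step is actually executed for each $n\in\{5,6,7,8\}$, would yield a self-contained alternative; as written, it reduces Proposition \ref{transitief} to precisely the orbit computation that \cite{WvL} performs, without performing it.
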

\begin{proof}
This is Proposition 5.10 in \cite{WvL}.
\end{proof}

We now set up notation for Lemma \ref{genpos1}, which is used in Propositions \ref{key1} and \ref{key2}. Lemma \ref{uniquecubic} is used in Proposition \ref{key1}.

\vspace{11pt}

Let $\mathbb{P}^2$ be the projective plane over $k$ with coordinates $x,y,z$. Let $R_1,\ldots,R_9$ be nine points in $\mathbb{P}^2$, with $R_i=(x_i:y_i:z_i)$ for $i\in\{1,\ldots,9\}$. For $i\in\{1,2,3,4\}$, let Mon$_i$ be the decreasing sequence of $r_i=\binom{i+2}{2}=\tfrac12(i+1)(i+2)$ monomials of degree $i$ in $x,y,z$, ordered lexicographically with $x>y>z$, and for $j\in\{1,\ldots,r_i\}$, let Mon$_i[j]$ be the $j^{\mbox{\tiny{th}}}$ entry of Mon$_i$. For $\delta\in\{x,y,z\},$ let Mon$_i^\delta$ be the list of derivatives of the entries in Mon$_i$ with respect to $\delta$. 
We will define matrices $M,N,L,H$. Note that each row is well defined up to scaling. This means that for all these matrices, the determinant is well defined up to scaling, so asking for the determinant to vanish is well defined. 

\begin{align*}
&M=\left(a_{i,j}\right)_{i,j\in\{1,2,3\}} & &\mbox{with } a_{i,j}=\mbox{Mon}_1[j](R_i);\\
&N=\left(b_{i,j}\right)_{i,j\in\{1,\ldots,6\}} & &\mbox{with } b_{i,j}=\mbox{Mon}_2[j](R_i);\\
&L=\left(c_{i,j}\right)_{i,j\in\{1,\ldots,10\}} & &\mbox{with } c_{i,j} = 
\left\{
	\begin{array}{ll}
		\mbox{Mon}_3[j](R_i)  & \mbox{for } i\leq 8 \\
		\mbox{Mon}_3^{x}[j](R_8)& \mbox{for } i = 9 \\
		\mbox{Mon}_3^{z}[j](R_8)& \mbox{for } i = 10 
	\end{array}
\right..\end{align*}

For $i\in\{7,8,9\}$, let $\alpha_i,\beta_i,\gamma_i$ be such that $\{\alpha_i,\beta_i,\gamma_i\}=\{x,y,z\}$. Define the matrix $$H=\left(d_{i,j}\right)_{i,j\in\{1,\ldots,15\}},$$  
\begin{align*}
& &\mbox{with } d_{i,j}=
\left\{
	\begin{array}{ll}
		\mbox{Mon}_4[j](R_i)  & \mbox{for } i\leq 9 \\
		\mbox{Mon}_4^{\beta_7}[j](R_7)& \mbox{for } i=10\\
		\mbox{Mon}_4^{\gamma_7}[j](R_7)& \mbox{for } i=11\\
		\mbox{Mon}_4^{\beta_8}[j](R_8)& \mbox{for } i=12\\
		\mbox{Mon}_4^{\gamma_8}[j](R_8)& \mbox{for } i=13\\
		\mbox{Mon}_4^{\beta_9}[j](R_9)& \mbox{for } i=14\\
		\mbox{Mon}_4^{\gamma_9}[j](R_9)& \mbox{for } i=15
	\end{array}
\right..\end{align*}

The definitions of $\alpha_i,\beta_i,\gamma_i$ may seem a little confusing; we use them to formulate part (iv) of the following lemma. Note that we choose them before we define the matrix $H$ above, so this matrix is well-defined. 
 
\begin{lemma}\label{genpos1}
The following hold.
\begin{itemize}
\item[](i) The points $R_1,R_2,$ and $R_3$ are collinear if and only if det$(M)=0$.
\item[](ii) The points $R_1,\ldots,R_6$ are on a conic if and only if det$(N)=0$.
\item[](iii) If the points $R_1,\ldots,R_8$ are on a cubic with a singular point at $R_8$, then det$(L)=0$. If $y_8\neq0$, then the converse also holds.
\item[](iv) If the points $R_1,\ldots,R_9$ are on a quartic that is singular at $R_7,\;R_8$ and $R_9$, then det$(H)=0$. If the $\alpha_i$ coordinate of $R_i$ is nonzero for $i\in\{7,8,9\}$, then the converse also holds. 
\end{itemize}
\end{lemma}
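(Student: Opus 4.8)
The plan is to treat each of the four statements as an instance of the same linear-algebra principle: a set of plane curves of degree $i$ is a linear system whose coefficients are indexed by the monomials in $\mathrm{Mon}_i$, and the conditions ``passing through a point $R$'' or ``being singular at $R$'' are linear conditions on those coefficients. Concretely, a curve of degree $i$ with coefficient vector $\mathbf{c}$ passes through $R$ precisely when $\sum_j c_j \,\mathrm{Mon}_i[j](R)=0$, and it is singular at $R$ when in addition all three partial derivatives vanish, i.e. $\sum_j c_j\,\mathrm{Mon}_i^\delta[j](R)=0$ for $\delta\in\{x,y,z\}$. Each row of $M,N,L,H$ encodes exactly one such linear condition, so the vanishing of the relevant determinant is equivalent to the existence of a nonzero coefficient vector in the kernel, i.e. to the existence of a curve of the prescribed degree satisfying all the listed incidence and singularity conditions. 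This observation handles the ``only if'' directions essentially for free, and gives the square-matrix count: three conditions for a line (so $M$ is $3\times 3$), six for a conic through six points ($N$ is $6\times 6$), and so on.

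For parts (i) and (ii) I would argue the equivalence directly. In (i), $\det(M)=0$ iff the three vectors $(\mathrm{Mon}_1[j](R_i))_j=(x_i,y_i,z_i)$ are linearly dependent, which is exactly collinearity of $R_1,R_2,R_3$; here the equivalence is unconditional because a line in $\mathbb{P}^2$ is determined by its three coefficients with no degeneracy to worry about. Part (ii) is the analogous statement for the six-dimensional space of conics: $\det(N)=0$ iff there is a nonzero conic vanishing at all six points, i.e. iff the six points lie on a conic. Again no side condition is needed, because a nonzero solution vector genuinely produces a conic through the six points.

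The subtlety, and the reason the converses in (iii) and (iv) carry hypotheses, lies in the difference between ``there exists a nonzero solution vector'' and ``there exists an honest curve of the correct degree that is genuinely singular at the marked points.'' For (iii) the matrix $L$ is $10\times 10$: rows $1$ through $8$ impose passage through $R_1,\dots,R_8$, while rows $9$ and $10$ impose vanishing of the $x$- and $z$-partials at $R_8$. If $R_1,\dots,R_8$ lie on a cubic singular at $R_8$, all ten conditions hold, so $\det(L)=0$; this is the easy direction. For the converse, a kernel vector yields a cubic $C$ vanishing at the eight points with $\partial_x C(R_8)=\partial_z C(R_8)=0$, but singularity at $R_8$ additionally requires $\partial_y C(R_8)=0$, which is not among the rows. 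The Euler relation $x\,\partial_x C + y\,\partial_y C + z\,\partial_z C = 3C$ comes to the rescue: evaluating at $R_8$, since $C(R_8)=0$ and $\partial_x C(R_8)=\partial_z C(R_8)=0$, we get $y_8\,\partial_y C(R_8)=0$, so $\partial_y C(R_8)=0$ \emph{provided} $y_8\neq 0$. This is exactly where the hypothesis $y_8\neq 0$ is used, and it recovers the missing third partial to conclude genuine singularity.

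Part (iv) is the same mechanism deployed three times, and it is the main bookkeeping obstacle. The quartic system is $15$-dimensional (so $H$ is $15\times 15$): rows $1$–$9$ force passage through $R_1,\dots,R_9$, and rows $10$–$15$ force vanishing of two chosen partials ($\partial_{\beta_i},\partial_{\gamma_i}$) at each of $R_7,R_8,R_9$. The forward direction is again immediate. For the converse, at each singular point $R_i$ ($i\in\{7,8,9\}$) I have two of the three partials vanishing by construction, plus $C(R_i)=0$, and I invoke Euler's relation in the form $\alpha_i\,\partial_{\alpha_i}C(R_i)+\beta_i\,\partial_{\beta_i}C(R_i)+\gamma_i\,\partial_{\gamma_i}C(R_i)=4\,C(R_i)$ to deduce that the third partial $\partial_{\alpha_i}C(R_i)$ vanishes as well, using precisely the hypothesis $\alpha_i\neq 0$. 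Since this argument is run independently at each of the three points, the combined hypothesis $\alpha_i\neq 0$ for $i\in\{7,8,9\}$ yields simultaneous singularity at all three, completing the equivalence. The one point to watch is that the Euler relation requires the correct degree factor ($3$ for cubics, $4$ for quartics) and that in each case the term one wishes to eliminate is the one weighted by the nonzero coordinate; the choice of labeling $\{\alpha_i,\beta_i,\gamma_i\}=\{x,y,z\}$ is exactly what lets us designate $\alpha_i$ as the coordinate assumed nonzero.
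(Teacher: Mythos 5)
Your proposal is correct and follows essentially the same route as the paper's proof: identify the vanishing of each determinant with the existence of a nonzero coefficient vector in the kernel, i.e.\ a curve of the prescribed degree through the points with the listed partial derivatives vanishing, and then use the Euler relation $x\lambda_x+y\lambda_y+z\lambda_z=d\lambda$ (with $d=3$ or $4$) to recover the missing partial derivative at each marked point, which is exactly where the hypotheses $y_8\neq 0$ and $\alpha_i\neq 0$ enter.
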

\begin{proof}
\begin{itemize}
\item[]
\item[](i) The determinant of $M$ is zero if and only if there is a nonzero element in the nullspace of $M$, that is, there is a nonzero vector $(m_1,m_2,m_3)$ such that for all $i\in \{1,2,3\}$, we have $m_1a_{i,1}+m_2a_{i,2}+m_3a_{i,3}=0$. But this is the case if and only if the line defined by $m_1x+m_2y+m_3z$ contains all three points. 
\item[](ii) This proof goes analogously to the proof of (i).
\item[](iii) The determinant of $L$ is zero if and only if there is a nonzero vector $(l_1,\ldots,l_{10})$ in $k^{10}$ such that for all $i\in \{1,\ldots,10\}$, we have $l_1c_{i,1}+\cdots+l_{10}c_{i,10}=0$. This is the case if and only if the cubic $C$ defined by $\lambda= \sum_{i=1}^{10}l_i\mbox{Mon}_3[i]$ contains all eight points $R_1,\ldots,R_8$, and moreover, the derivatives $\lambda_x,\lambda_z$ of $\lambda$ with respect to $x$ and $z$ vanish in $R_8$. So if $R_1,\ldots,R_8$ are on a cubic with a singular point at $R_8$, the determinant of $L$ vanishes. Conversely, if det$(L)=0$ and $y_8\neq0$, since we have $x\lambda_x+y\lambda_y+z\lambda_z=3\lambda$, this implies that also the derivative $\lambda_y$ of $\lambda$ with respect to $y$ vanishes in $R_8$, hence $C$ is singular in $R_8$. 
\item[](iv) The determinant of $H$ is zero if and only if there exists a nonzero vector given by $(h_1,\ldots,h_{15})$ such that for all $i\in \{1,\ldots,15\}$, we have $h_1d_{i,1}+\cdots+h_{15}d_{i,15}=0$. This is the case if and only if the quartic $K$ defined by $\lambda= \sum_{i=1}^{15}h_i\mbox{Mon}_4[i]$ contains $R_1,\ldots,R_9$, and moreover, for $i\in\{7,8,9\}$, the derivatives $\lambda_{\delta}$ for $\delta\in\{x,y,z\}\setminus\{\alpha_i\}$ vanish in $R_i$. So if $R_1,\ldots,R_9$ are on a quartic that is singular at $R_7,\;R_8$ and $R_9$, the determinant of $H$ vanishes. Conversely, if det$(H)=0$ and $\alpha_i\neq0$ for $i\in\{7,8,9\}$, then, since we have $x\lambda_x+y\lambda_y+z\lambda_z=4\lambda$, this implies that also $\lambda_{\alpha_i}$ vanishes in $R_i$ for $i\in\{7,8,9\}$. So $K$ is singular in $R_7$, $R_8$, and $R_9$.\qedhere
\end{itemize} 
\end{proof}

We recall that $k$ is an algebraically closed field, and $\mathbb{P}^2$ the projective plane over $k$.

\begin{lemma}\label{uniquecubic}If $R_1,\ldots,R_7$ are seven distinct points in $\mathbb{P}^2$ such that $R_1,\ldots,R_6$ are in general position, and the line $L$ containing $R_1$ and $R_7$ contains none of the other points, then there is a unique cubic containing all seven points that is singular in~$R_1$. This cubic does not contain $L$.\end{lemma}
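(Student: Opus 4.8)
The plan is to work in the $10$-dimensional $k$-vector space $V$ of cubic forms on $\mathbb{P}^2$ and to study the linear subspace $W\subseteq V$ consisting of those forms that vanish at $R_2,\ldots,R_7$ and are singular at $R_1$. Vanishing at the six points $R_2,\ldots,R_7$ imposes six linear conditions, and being singular at $R_1$ imposes three further linear conditions (the three partial derivatives must vanish at $R_1$; vanishing of the form itself at $R_1$ then follows from Euler's relation). Hence $W$ is cut out by at most nine linear conditions, so $\dim_k W\geq 10-9=1$, which already gives the existence of a cubic through all seven points and singular at $R_1$. It remains to show that $\dim_k W=1$ and that the curve it defines does not contain $L$.

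Both of these will follow from two complementary observations. First, I would show that \emph{no} member of $W$ contains $L$: if $f\in W$ and $L$ is a component of $V(f)$, then $f=\ell\cdot q$ for a linear form $\ell$ cutting out $L$ and a conic form $q$. Since $\ell(R_1)=0$ while $R_1$ is singular on $V(f)$, the product rule gives $\nabla f(R_1)=q(R_1)\,\nabla\ell(R_1)$, and as $\nabla\ell$ is a nonzero constant vector this forces $q(R_1)=0$; since none of $R_2,\ldots,R_6$ lies on $L$, they must all lie on $V(q)$. Thus $V(q)$ is a conic through all of $R_1,\ldots,R_6$, contradicting the hypothesis that these six points are in general position. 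Second, I would show that \emph{if} $\dim_k W\geq 2$, then some member of $W$ does contain $L$: choose a point $P\in L$ distinct from $R_1$ and $R_7$, and note that passage through $P$ is one extra linear condition, so there is a nonzero $f\in W$ with $f(P)=0$. The cubic $C=V(f)$ then meets $L$ at $R_1$ with multiplicity at least $2$ (as $R_1$ is a singular point of $C$ lying on $L$) and at the two further distinct points $R_7$ and $P$, giving total intersection at least $4>3=\deg C$; by B\'ezout, $L$ is a component of $C$.

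Putting these together, $\dim_k W\geq 2$ is impossible, so $\dim_k W=1$ and the cubic is unique, and since no member of $W$ contains $L$, this unique cubic does not contain $L$. I expect the main point to get right to be the intersection-theoretic bookkeeping in the second observation --- correctly accounting for the contribution of the singular point $R_1$ to $(C\cdot L)$ so that the total exceeds $3$ --- together with the clean deployment of the general-position hypothesis (no conic through $R_1,\ldots,R_6$) to exclude $L$ in the first observation. The remaining hypotheses are used precisely here: distinctness of the seven points and the fact that $L$ meets none of $R_2,\ldots,R_6$ are exactly what make $R_1,R_7,P$ distinct points of $C\cap L$ while keeping $R_2,\ldots,R_6$ off $L$, so that both observations apply without degenerate overlaps.
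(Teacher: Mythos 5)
Your proof has the same skeleton as the paper's: existence by a linear-algebra dimension count, exclusion of $L$ by producing a conic through $R_1,\ldots,R_6$, and uniqueness by forcing a member of the linear system to meet $L$ with total multiplicity at least $4$ and applying B\'ezout. Within that skeleton your uniqueness step is a genuine variant: you impose passage through an auxiliary point $P\in L\setminus\{R_1,R_7\}$, while the paper first shows every member of the system is smooth at $R_7$ and then uses the tangent lines at $R_7$ of two linearly independent members to manufacture a member whose tangent line at $R_7$ is $L$; both routes give intersection number at least $2+1+1=4>3$, respectively $2+2=4>3$, and your version is arguably the cleaner of the two.

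There is, however, one step that fails: the assertion that vanishing of $f$ at $R_1$ follows, via Euler's relation, from the vanishing of the three partials at $R_1$. Euler's relation for a cubic form reads $xf_x+yf_y+zf_z=3f$, and the lemma is needed in characteristic $3$ (here $k$ is an arbitrary algebraically closed field, and the lemma is invoked in Proposition \ref{key1}, which excludes only characteristic $2$). In characteristic $3$ the right-hand side is identically zero, so the partials vanishing at $R_1$ say nothing about $f(R_1)$; for instance $f=x^3$ has all three partials identically zero yet does not vanish at $(1:0:0)$. Thus your space $W$ is a priori cut out by ten conditions rather than nine, and the bound $\dim_k W\geq 1$ --- the entire existence step --- is unjustified precisely in characteristic $3$. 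The repair is cheap: in characteristic $3$ the same identity $xf_x+yf_y+zf_z=0$ shows that the three partial-derivative conditions at $R_1$ are linearly dependent (their combination weighted by the coordinates of $R_1$ vanishes identically in $f$), so adding $f(R_1)=0$ as a separate condition still yields at most nine independent conditions. Alternatively, argue as the paper does: take the system of cubics through all seven points (projective dimension at least $2$) and note that singularity at a point already lying on the cubic imposes only two further linear conditions --- a count valid in every characteristic.
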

\begin{proof}
The linear system of cubics containing $R_1,\ldots,R_7$ is at least two-dimensional. Requiring that a cubic in this linear system is singular in $R_1$ gives two linear conditions, defining a linear subsystem $\mathcal{C}$ of dimension $\geq0$, so there is at least one cubic containing $R_1,\ldots,R_7$ that is singular at $R_1$. \\
Let $D$ be an element of $\mathcal{C}$; we claim that $D$ does not contain the line $L$ that contains $R_1$ and $R_7$. Indeed, if $D$ were the union of $L$ and a conic $C$, then $R_1$ would be contained in $C$ since it is a singular point of $D$. Since the points $R_2,\ldots, R_6$ are not on $L$ by assumption, they would also be contained in $C$, contradicting the fact that $R_1,\ldots,R_6$ are in general position. So $D$ does not contain $L$. Note that this implies that $D$ is smooth in $R_7$, since if it were singular, then $D$ would intersect $L$ with multiplicity at least 4, hence $D$ would contain $L$.\\
Now assume that there is more than one element in $\mathcal{C}$. Then there are two cubics $D_1$ and $D_2$ that contain $R_1,\ldots,R_7$ with a singularity at $R_1$, and whose defining polynomials are linearly independent. By what we just showed, they are not singular in~$R_7$. For $i=1,2$, let $l_i$ be the tangent line to $D_i$ at $R_7$. 
If the equations defining $l_1$ and $l_2$ are not linearly independent, then there is an element $F$ of $\mathcal{C}$ that is singular in $R_7$, giving a contradiction. We conclude that the equations defining $l_1$ and $l_2$ must be linearly independent. Therefore, there is an element $G$ in $\mathcal{C}$ such that the line $L$ through $R_1$ and $R_7$ is the tangent line to $G$ at $R_7$. But then $L$ intersects~$G$ in four points counted with multiplicity, so it is contained in $G$. This contradicts the fact that $G$ is in $\mathcal{C}$. We conclude that there is a unique cubic through $R_1,\ldots,R_7$ that is singular in $R_1$. This cubic does not contain the line through $R_1$ and $R_7$.
\end{proof}

\begin{proposition}\label{key1}
Assume that the characteristic of $k$ is not 2. Let $Q_1,\ldots,Q_8$ be eight points in~$\mathbb{P}^2$ in general position. For $i\in\{1,2,3,4\}$, let $L_i$ be the line through $Q_{2i}$ and $Q_{2i-1}$, and for $i,j\in\{1,\ldots,8\}$, with $i\neq j$, let $C_{i,j}$ be the unique cubic through $Q_1,\ldots,Q_{i-1},Q_{i+1},\ldots,Q_8$ that is singular in $Q_j$, which exists by Lemma~\ref{uniquecubic}. Assume that the four lines $L_1,\;L_2,\;L_3$ and $L_4$ are concurrent in a point~$P$. Then the three cubics $C_{7,8}$, $C_{8,7}$, and $C_{6,5}$ do not all contain $P$. 
\end{proposition}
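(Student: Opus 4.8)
The plan is to translate the geometric concurrency statement into an explicit algebraic condition and then exhibit, via a direct computation, that the three conditions (the four lines meeting at $P$, and each of $C_{7,8}$, $C_{8,7}$, $C_{6,5}$ passing through $P$) cannot hold simultaneously for points in general position. Since the paper explicitly states that Proposition~\ref{key1} is one of the two places where \texttt{magma} is used, I expect the intended proof to be computational in nature, and my sketch follows that strategy. First I would fix a convenient normalization: using the action of $\mathrm{PGL}_3$ on $\mathbb{P}^2$, I can place several of the eight points $Q_1,\ldots,Q_8$ at standard coordinates (say the four reference points $(1{:}0{:}0)$, $(0{:}1{:}0)$, $(0{:}0{:}1)$, $(1{:}1{:}1)$), leaving the remaining points with free parameters. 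This reduces the dimension of the parameter space considerably while preserving general position for generic values of the free parameters.

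\textbf{Encoding the hypotheses.}
Next I would encode each geometric object through the determinantal criteria of Lemma~\ref{genpos1}. The four lines $L_1,\ldots,L_4$ are concurrent at a point $P$ precisely when the $3\times 3$ determinant whose rows are the line coefficients of $L_1,L_2,L_3$ vanishes (three lines already determine $P$; the fourth gives one more condition, but here we treat $P$ as the common intersection point and impose that $P$ lies on $L_4$ as well). Concretely, $P$ is obtained as the cross product of the coefficient vectors of $L_1$ and $L_2$, and concurrency of all four lines is the vanishing of the relevant determinants built from pairs among $L_1,\ldots,L_4$. For the cubics, Lemma~\ref{uniquecubic} guarantees each $C_{i,j}$ exists and is unique; I would recover its defining polynomial as the generator of the one-dimensional kernel of the appropriate matrix $L$ from the setup preceding Lemma~\ref{genpos1} (with the eight relevant points and the singularity condition at $Q_j$), and then the condition ``$C_{i,j}$ passes through $P$'' is the single equation obtained by evaluating that cubic at the explicit coordinates of $P$.

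\textbf{The computation and the main obstacle.}
Having assembled three polynomial equations in the free coordinates of the points, I would form the ideal they generate in the polynomial ring over $k$ (or over $\mathbb{Z}$, to handle all characteristics $\neq 2$ uniformly) and show it has no solution in the locus of points in general position. In practice this means computing a Gr\"obner basis and checking that the ideal, after saturating by the products of the non-degeneracy polynomials (the determinants from Lemma~\ref{genpos1} that must be \emph{nonzero} for the points to be in general position, together with $\mathrm{char}\,k\neq2$ expressed by inverting $2$), is the unit ideal. The hypothesis $\mathrm{char}\,k\neq 2$ is essential and I expect it to enter exactly here: the resultant or Gr\"obner basis computation should produce a factor that is a power of $2$ (or a polynomial that becomes trivial only in characteristic $2$), so that the contradiction is valid precisely away from characteristic $2$. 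The main obstacle is managing the size and the characteristic-dependence of this computation: the elimination must be carried out carefully so that the only obstructions to the emptiness of the solution set are the genuine degeneracy conditions (which are excluded by the general-position hypothesis) and the characteristic-$2$ locus (excluded by assumption), rather than spurious components introduced by the parametrization. Verifying that the saturation removes exactly the degenerate loci, and that what remains forces a contradiction in all characteristics other than $2$, is the delicate step that the \texttt{magma} computation is designed to settle.
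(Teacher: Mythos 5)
Your overall framing---encode everything through the determinantal criteria of Lemma \ref{genpos1}, then show the resulting polynomial system has no solutions in the general-position locus---matches the paper, and your guess about where char $k\neq 2$ enters is exactly right: the decisive identity in the paper has a constant factor $-2$. The genuine gap is that the core of your plan (form the ideal of all conditions, saturate by the non-degeneracy determinants, and have a Gr\"obner basis computation certify that the result is the unit ideal) is precisely the approach the authors rule out in Remark \ref{groebner1}: checking $A_0\subseteq S_0$ by Gr\"obner bases is possible in theory but ``turned out to be too big for \texttt{magma} to do.'' So as written, your proof defers its one ``delicate step'' to a computation that does not terminate in practice; all of the content that makes the proposition provable is missing. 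Your normalization also makes matters worse rather than better: fixing four of the $Q_i$ leaves eight free parameters and forces you to carry the concurrency of $L_1,\ldots,L_4$ as extra equations, whereas the paper normalizes $Q_1,Q_3,Q_5$ \emph{together with the concurrency point} $P=(0:0:1)$, so that concurrency is built into the coordinates and only six parameters $a,b,c,m,u,v$ remain.

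What is missing, and what constitutes the actual proof, is a hand-guided elimination and factorization that makes the computation tractable. First, the determinants expressing that $C_{6,5}$ and $C_{7,8}$ contain $P$ factor with essential parts $f=\alpha v+\beta$ and $g=\gamma u+\delta$, which are \emph{linear} in $v$ and $u$; this linearity yields a section $\mathbb{A}^4\setminus W\to\mathbb{A}^6$ eliminating those two variables. Second, the loci $\alpha=0$ and $\gamma=0$, where this elimination breaks down, are handled by a geometric argument: there the cubic would contain an entire line, hence split as a line plus a singular conic, forcing collinearity among the $Q_i$. Third, after the substitution, the condition that $C_{8,7}$ contains $P$ becomes a single polynomial in $a,b,c,m$ whose explicit factorization is $-2abm(m-1)^3(b-1)(a-1)(a+b-1)f_1f_2f_3$, and each non-constant factor is matched to a degeneracy of the configuration; in particular $f_1f_2f_3$ divides the conic determinant $\det(N)$ for $Q_3,\ldots,Q_8$, so those factors mean six points lie on a conic. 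The only computer computations are evaluations and factorizations of explicit determinants---never a Gr\"obner basis of the full system. Without an argument of this structure (or concrete evidence that your saturation is actually feasible), the proposal does not establish the proposition.
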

\begin{proof}
First note that if $P$ were equal to one of the $Q_i$, then three of the eight $Q_i$ would be on a line, which would contradict the fact that $Q_1,\ldots,Q_8$ are in general position. We conclude that $P$ is not equal to one of the $Q_i$. Moreover, if $P$ were collinear with any two of the three points $Q_1,Q_3,Q_5$, say for example with $Q_1$ and $Q_3$, then, since $P$ is also contained in $L_1$ and $L_2$, it would follow that $L_1$ and $L_2$ are equal, giving a contradiction. So $Q_1,Q_3,Q_5$ and $P$ are in general position.\\
Let $(x:y:z)$ be the coordinates in $\mathbb{P}^2$. After applying an automorphism of $\mathbb{P}^2$ if necessary, we can assume that we have 
\begin{align*}
&Q_1=(0:1:1);	& 	Q_3&=(1:0:1)\\
&Q_5=(1:1:1);	& 	P&=(0:0:1).
\end{align*}
Then we have the following.
\begin{align*}
&L_1\mbox{ is the line given by }x=0;\\
&L_2\mbox{ is the line given by }y=0;\\
&L_3\mbox{ is the line given by }x=y.
\end{align*}Since $L_4$ contains $P$, and is unequal to $L_1$ and $L_2$, there is an $m\in k^*$ such that $L_4$ is the line given by $my=x$. Since $Q_2,Q_7$ and $Q_8$ are not in $L_2$, and $Q_4$ is not in $L_1$, there are $a,b,c,u,v\in k$ such that 
\begin{align*}
&Q_2=(0:1:a);	& 	&Q_7=(m:1:v);\\
&Q_4=(1:0:b);	& 	&Q_8=(m:1:c).\\
&Q_6=(1:1:u);   &	&
\end{align*}

We define $\mathbb{A}^{6}$ to be the affine space with coordinate ring $T_{6}=k[a,b,c,m,u,v].$ Points in $\mathbb{A}^{6}$ correspond to configurations of the points $Q_1,\ldots,Q_8$. \\
Assume by contradiction that $C_{7,8}$, $C_{8,7}$, and $C_{6,5}$ all contain $P$. This assumption gives polynomial equations in the variables $a,b,c,m,u,v$, and hence defines an algebraic set $A_0$ in~$\mathbb{A}^{6}$. We define $S_0$ to be the algebraic set of all points in $\mathbb{A}^{6}$ that correspond to the configurations where three of the points $Q_1,\ldots,Q_8$ lie on a line, or six of the points lie on a conic. We want to show that $A_0$ is contained in $S_0$, which proves the proposition.\\
Note that the line containing $P$ and $Q_5$, which is $L_3$, does not contain any of the points $Q_1,Q_2,Q_3,Q_4,Q_8$. Set $(R_1,\ldots,R_7)=(Q_5,Q_1,Q_2,Q_3,Q_4,Q_8,P)$, then it follows from Lemma \ref{uniquecubic} that there is a unique cubic $D$ containing $Q_1,Q_2,Q_3,Q_4,Q_5,Q_8$ and $P$ that is singular in $Q_5$, and that $D$ does not contain $L_3$. By uniqueness,~$D$ must be equal to $C_{6,5}$, and therefore also contains $Q_7$. By Lemma \ref{genpos1}, the equation expressing that $Q_7$ is in $D$ (or equivalently that $P$ is in $C_{6,5}$) is given by det$(L)=0$, where $L$ is the matrix associated to $(R_1,\ldots,R_8)=(Q_1,Q_2,Q_3,Q_4,Q_7,Q_8,P,Q_5)$. We have 
$$\mbox{det}(L)=-m(m-1)(c-v)(b-1)(a-1)f,$$
where $f=\alpha v+\beta$ with $$\alpha=a-ac-bc+bm,\;\;\beta=b(a-1)m^2+b(c-2a)m+a(b+c-1).$$
The first five factors of det$(L)$ define subsets of $S_0$, and hence do not correspond to configurations where $Q_1,\ldots,Q_8$ are in general position. Therefore, $C_{6,5}$ contains~$P$ if and only if $f=0$.  
Define $V=Z(\alpha)$. Let $(a_0,b_0,c_0,m_0,u_0,v_0)$ be an element in $V\cap A_0$. Then $\alpha(a_0,b_0,c_0,m_0,u_0,v_0)=f(a_0,b_0,c_0,m_0,u_0,v_0)=0$, so we find $\beta(a_0,b_0,c_0,m_0,u_0,v_0)=0$. But $\alpha$ and $\beta$ do not depend on $v$, so this implies  $f(a_0,b_0,c_0,m_0,u_0,v')=0$ for every $v'$. So every element in $V\cap A_0$ corresponds to a configuration of $Q_1,\ldots,Q_8$ such that every point $(m:1:v')$ on $L_4$ is also contained in $D$. But if this is the case, then $D$ consists of $L_4$ and a conic, which is singular, since $Q_5$ is a singular point of $D$ that is not contained in~$L_4$. Since $L_4$ contains none of the points $Q_1,Q_2,Q_3,Q_4$, these four points are then on the singular conic, which implies that $Q_5$ is collinear with at least two other points. We conclude that $V\cap A_0$ is a subset of $S_0$.\\
Analogously, the fact that $C_{7,8}$ contains $P$ is expressed by det$(L')=0$, where $L'$ is the matrix denoted by $L$ in Lemma~\ref{genpos1} with $$(R_1,\ldots,R_8)=(Q_1,Q_2,Q_3,Q_4,Q_5,Q_6,P,Q_8).$$ 
We have 
$$\mbox{det}(L')=-m(u-1)(m-1)(b-1)(a-1)g,$$ where $g=\gamma u+\delta$ with 
$$\gamma=bm^3+(1-bc-c)m^2+(c^2-2c+1)m+a(1-c)+c^2-c,$$
and
\begin{multline}\delta =-abm^3+(abc+ab+ac-a+b-2bc)m^2+(ab-2abc+a+2bc^2-b-ac^2+2c^2-2c)m\\
+a(bc-b+2c^2-2c)-bc^2+bc-2c^3+2c^2.\nonumber
\end{multline}
The first five factors of det$(L')$ correspond to configurations where the eight points are not in general position, so $C_{7,8}$ contains $P$ if and only if $g=0$. Define $U=Z(\gamma)$. By the same reasoning as for $V\cap A_0$ (now using the fact that $D$ does not contain the line $L_3$), we have $U\cap A_0\subseteq S_0$.  
Set $$v'=\frac{-\beta}{\alpha}\;\;\mbox{ and }\;\;u'=\frac{-\delta}{\gamma}.$$ Define $\mathbb{A}^{4}$ to be the affine space with coordinate ring $T_{4}=k[m,a,b,c],$ and let $K_{4}$ be its fraction field. Let $W\subset\mathbb{A}^4$ be the set defined by $\alpha=\gamma=0$.  
Consider the ring homomorphism $\psi\colon T_{6}\longrightarrow K_{4}$ defined by
$$(m,a,b,c,u,v)\longmapsto(m,a,b,c,u',v').$$
This defines a morphism $i\colon\mathbb{A}^{4}\setminus W\longrightarrow \mathbb{A}^{6}\setminus \left(V\cup U\right)$, which is a section of the projection $\mathbb{A}^6\longrightarrow \mathbb{A}^4$ on the first four coordinates. Set $A_0'=A_0\setminus\left(V\cup U\right)$. Then we have $A_0\subset S_0$ if and only if $A_0'\subseteq S_0$. Moreover, $A_0'$ is contained in $Z(f,g)$, and since $f$ and $g$ are linear in $v$ and $u$ respectively, we have $i^{-1}(A_0')\cong A_0'$. Set $A_1=i^{-1}(A_0')$ and $S_1=i^{-1}(S_0)$, then $A_0'\subseteq S_0$ is equivalent to $A_1\subseteq S_1$.\\
Let $L''$ be the matrix denoted by $L$ in Lemma~\ref{genpos1} with $$(R_1,\ldots,R_8)=(Q_1,Q_2,Q_3,Q_4,Q_5,Q_6,P,Q_7).$$ Similarly to $C_{7,8}$, the fact that $C_{8,7}$ contains $P$ is expressed by the vanishing of the determinant of $L''$. We compute this determinant and write it in terms of the coordinates of $\mathbb{A}^4$ using $\psi$. We find the expression \begin{equation}\label{detl}-2abm(m-1)^3(b-1)(a-1)(a+b-1)f_1f_2f_3,
\end{equation}
with $$f_1=ac-a+bcm-bm^2-c^2+cm+c-m,$$

$$f_2=abm^2-2abm+ab-ac^2+2ac-a-bc^2+2bcm-bm^2,$$

and 

\begin{multline}f_3=abcm^2-2abcm+abc-abm^3+abm^2+abm-ab-ac^2m+2ac^2\\
+acm^2-3ac-am^2+am+a+2bc^2m-bc^2-3bcm^2+bc+bm^3\\
+bm^2-bm-2c^3+3c^2m+3c^2-cm^2-4cm-c+m^2+m. \nonumber\end{multline}

Expression (\ref{detl}) defines the set $A_1$ in $\mathbb{A}^4$. Since char $k\neq2$, we have $(\ref{detl})=0$ if and only if at least one of the non-constant factors of (\ref{detl}) equals zero. We show that all non-constant factors of expression (\ref{detl}) define components of $S_1$. If $a=0$, then $Q_2,\;Q_3$ and $Q_5$ are contained in the line given by $x-z=0$. Similarly, $b=0$ implies that $Q_1,\;Q_4$ and $Q_5$ are on the line given by $y-z=0$, and $a+b-1=0$ implies that $Q_2,\;Q_4,$ and $Q_5$ are on the line given by $bx+ay-z=0$. If $m=0$ then $L_4=L_2$, and $m=1$ implies $L_4=L_3$, so in both cases there are four points on a line. If $a=1$ or $b=1$, then two of the eight points would be the same. Set $(R_1,\ldots,R_6)=(Q_3,\ldots,Q_8)$, and let~$N$ be the corresponding matrix from Lemma~\ref{genpos1}. We compute the determinant of $N$ and find that $f_1f_2f_3$ divides det$(N)$.
This means that $f_1$, $f_2$, as well as $f_3$ define components of $S_1$, more specifically, they define configurations where $Q_3,\ldots,Q_8$ are on a conic.
We conclude that all irreducible components of $A_1$ are contained in~$S_1$, which finishes the proof.\end{proof}

\begin{remark}\label{groebner1}Note that in theory we could have proved Proposition \ref{key1} with a computer, by checking that $A_0$ is contained in $S_0$ using Gr\"{o}bner bases. However, in practice, this turned out to be too big for \texttt{magma} to do. 
\end{remark}

We can now prove Theorem \ref{thm1}. Recall that a maximal set of exceptional curves that are concurrent in a point on the ramification curve consists of curves and their partners (Remark \ref{partners}). 

\begin{proofoftheorem1} First note that by Proposition \ref{bovengrens}, the number of exceptional curves through any point in $X$ is at most sixteen in all characteristics; this proves the case char $k=2$. \\
Now assume char $k\neq2$. Consider the clique $K=\{e_1,\ldots,e_6,e_1',\ldots,e_6'\}$ in $G$, where 
\begin{align*}
&e_1=L-E_1-E_2;\\
&e_2=L-E_3-E_4;\\
&e_3=L-E_5-E_6;\\
&e_4=L-E_7-E_8;\\
&e_5=3L-E_1-E_2-E_3-E_4-E_5-E_6-2E_8;\\
&e_6=3L-E_1-E_2-E_3-E_4-2E_5-E_7-E_8,
\end{align*}
and $e_i'$ is the partner of $e_i$, for all $i\in\{1,\ldots,6\}$. Note that $K$ is an element of $K_6$ as defined in Remark \ref{partners}. By Remark \ref{configuratie}, the classes $e_1,\ldots,e_4$ correspond to the strict transforms of the four lines through $P_i$ and $P_{i+1}$ for $i\in\{1,3,5,7\}$, and $e_5,e_6,e_5'$ correspond to the strict transforms of the unique cubics through the points $P_1,\ldots,P_6,P_8$, and the points $P_1,\ldots,P_5,P_7,P_8$, and the points $P_1,\ldots,P_6,P_7$, respectively, that are singular in $P_8$, and $P_5$, and $P_7$, respectively. \\
Now let $K'$ be a clique in $G$ with only edges of weights 1 and 3, consisting of at least six sets of an exceptional class with its partner. Let $\{\{f_1,f_1'\},\ldots,\{f_6,f_6'\}\}$ be a set of six such sets in $K'$. Since $W$ acts transitively on $K_6$ by Proposition \ref{transitief}, after changing the indices and interchanging $f_i$'s with their partner if necessary, there is an element $w\in W$ such that $f_i=w(e_i)$ and $f_i'=w(e_i')$ for $i\in\{1,\ldots,6\}$. For $i\in\{1,\ldots,8\}$, set $E_i'=w(E_i)$. Since the $E_i'$ are pairwise disjoint, by Lemma \ref{BD} we can blow down $E_1',\ldots,E_8'$ to points $Q_1,\ldots,Q_8\in \mathbb{P}^2$ that are in general position, such that $X$ is isomorphic to the blow-up of~$\mathbb{P}^2$ at $Q_1,\ldots,Q_8$, and $E_i'$ is the class in Pic $X$ corresponding to the exceptional curve above $Q_i$ for all~$i$. By Remark \ref{bijectionblowdown}, the sequence $(E_1',\ldots,E_8')$ induces a bijection between the exceptional curves on $X$ and the 240 vectors in Proposition \ref{exceptional}, such that the element $f_i$ corresponds to the class of the strict transform of the line through $Q_{2i-1}$ and $Q_i$ for $i\in\{1,\ldots,4\}$, the elements $f_5$ and $f_6$ correspond to the classes of the strict transforms of the unique cubics through the points $Q_1,\ldots,Q_6,Q_8$ and $Q_1,\ldots,Q_5,Q_7,Q_8$, respectively, that are singular in $Q_8$ and $Q_5$ respectively, and $f_i'$ is the unique class in $C$ intersecting $f_i$ with multiplicity three for all $i$. From Proposition~\ref{key1} it follows that the curves on $X$ corresponding to $f_1,\ldots,f_6,f_5'$ and $f_6'$ are not concurrent.\\
We conclude that a set of at least six exceptional curves and their partners is never concurrent. Since any maximal set of exceptional curves going through the same point on the ramification curve forms a clique consisting of curves and their partners, hence of even size, we conclude that this maximum is at most ten.  \qed
\end{proofoftheorem1}

\section{Proof of Theorem \ref{thm2}}\label{poffthecurve}

In this section we prove Theorem \ref{thm2}. The structure of the proof is similar to that of Theorem \ref{thm1}; we first determine the cliques in $G$ that possibly come from a set of exceptional curves that are concurrent outside the ramification curve of~$\varphi$ (Remark~\ref{vormcliques}), and show that their maximal size is 12 (Proposition \ref{max12bvt}). Then we show that the group $W$ acts transitively on these cliques of size~12 (Proposition \ref{trans12}) and 11 (Proposition \ref{trans11}), and finally we show that ten specific curves on $X$ are not concurrent in Proposition~\ref{key2}. This final proposition is again key to the proof of Theorem \ref{thm2}. 

\begin{remark}\label{vormcliques}
From Remark \ref{rem} we know that cliques in $G$ corresponding to exceptional curves that intersect each other in a point outside the ramification curve have no edges of weight 3. We conclude that these cliques contain only edges of weights 1 and 2. 
\end{remark}

\begin{proposition}\label{max12bvt}
The maximal size of cliques in $G$ with only edges of weights 1 and 2 is 12, and there are no inclusion-wise maximal cliques of size 11 with only edges of weights 1 and 2. 
\end{proposition}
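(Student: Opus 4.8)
The plan is to transport the statement to the root system $\Gamma$ and then read it off from the combinatorics of $E_8$. Under the bijection $c\mapsto c+K_X$ of Section~\ref{exccurves}, an edge of weight $w\in\{1,2\}$ in $G$ becomes an edge of weight $1-w\in\{0,-1\}$ in $\Gamma$, so a clique of the required type is exactly a set $\{r_1,\dots,r_n\}$ of roots in $E$ with $\langle r_i,r_j\rangle\in\{0,-1\}$ for all $i\neq j$. I would encode such a set by the graph $H$ on $\{1,\dots,n\}$ whose edges are the pairs with $\langle r_i,r_j\rangle=-1$, so that the Gram matrix of the $r_i$ is $M=2I-A(H)$ for $A(H)$ the adjacency matrix of $H$.

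\textbf{Upper bound and existence.} Since $M$ is the Gram matrix of vectors in the $8$-dimensional space $K_X^{\perp}$, it is positive semidefinite of rank at most $8$, and positive semidefiniteness forces the largest eigenvalue of $A(H)$ to be at most~$2$. By Smith's classification of graphs with spectral radius at most~$2$, every connected component of $H$ is a simply-laced Dynkin diagram (spectral radius $<2$) or an affine diagram $\tilde A_m,\tilde D_m,\tilde E_m$ (spectral radius exactly~$2$, simple by Perron--Frobenius). Writing $c$ for the number of affine components, the nullity of $M$ equals $c$, so $\mathrm{rank}(M)=n-c\le 8$; moreover each affine component has at least three vertices and contributes rank at least~$2$, whence $2c\le\mathrm{rank}(M)\le 8$ and $c\le 4$. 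Combining these gives $n=\mathrm{rank}(M)+c\le 8+4=12$, with equality forcing $\mathrm{rank}(M)=8$, $c=4$, and all four affine components triangles, i.e. $H=4\tilde A_2$. Conversely, $E_8$ contains a subsystem of type $4A_2$; taking from each of the four mutually orthogonal $A_2$-factors the three roots that sum to zero (pairwise inner product $-1$) yields twelve roots with pairwise inner products in $\{0,-1\}$, hence a clique of the required type of size~$12$.

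\textbf{No maximal clique of size 11.} I would run the same analysis for $n=11$: now $11=\mathrm{rank}(M)+c\le 8+c$ gives $c\ge 3$, while four affine components already need at least twelve vertices, so $c=3$ and $\mathrm{rank}(M)=8$ exactly. A size-$11$ clique is non-maximal precisely when some root of $E$ extends it to size~$12$, and by the previous step any size-$12$ clique has shape $4\tilde A_2$; deleting one vertex from $4\tilde A_2$ turns one triangle into an edge, so the only shape that a non-maximal size-$11$ clique can have is $3\tilde A_2+A_2$. Hence the proposition reduces to two purely $E_8$-geometric facts: that every size-$11$ clique actually has shape $3\tilde A_2+A_2$ (equivalently, configurations such as $3\tilde A_2+2A_1$, or those with a larger affine component, are not realised by genuine roots), and that each $3\tilde A_2+A_2$ configuration does extend, i.e. its $A_2$-edge together with a suitable further root completes a fourth triangle orthogonal to the other three. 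These are exactly the statements imported from \cite{WvL}: the explicit, \texttt{magma}-assisted enumeration of the maximal cliques of $\Gamma$ of weight type $\{0,-1\}$ shows that they all have size~$12$, which simultaneously confirms the bound and rules out maximal cliques of size~$11$. The abstract Smith-graph argument pins the sizes and shapes down cleanly, but it cannot by itself decide which shapes embed into $E_8$; that realizability question is the main obstacle and is precisely where the computational results of \cite{WvL} are essential.
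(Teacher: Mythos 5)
Your proof is correct, but it takes a genuinely different route from the paper, which disposes of the entire proposition in one line by citing Proposition 5.20(iii) of \cite{WvL} (itself proved there by a \texttt{magma}-assisted enumeration of cliques in $\Gamma$). Your spectral argument --- Gram matrix $2I-A(H)$, Smith's classification of graphs with spectral radius at most $2$, nullity of $M$ equal to the number of affine components --- gives a conceptual, computer-free proof of the upper bound $12$, of the fact that every size-$12$ clique has shape $4\tilde{A}_2$, and, via the standard subsystem $4A_2\subset E_8$, of the existence of size-$12$ cliques; none of this reasoning appears in the paper. For the absence of maximal size-$11$ cliques your reduction to two realizability statements is correct, but at that point you fall back on the same citation as the paper, so that half of your proof is not independent. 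It is worth noting, however, that you are closer to a fully computation-free proof than you claim. First, the extension of any realized $3\tilde{A}_2+A_2$ needs nothing from \cite{WvL}: if $s_1,s_2$ are the roots of the $A_2$-edge, then $\langle s_1,s_2\rangle=-1$ forces $r=-(s_1+s_2)$ to be a root of $E$; it satisfies $\langle r,s_1\rangle=\langle r,s_2\rangle=-1$ and is orthogonal to the nine roots of the three triangles (being a linear combination of $s_1$ and $s_2$), so it completes the clique to size $12$. Second, the other shapes allowed by your rank count ($3\tilde{A}_2+2A_1$, $\tilde{A}_3+2\tilde{A}_2+A_1$, $\tilde{A}_4+2\tilde{A}_2$, $\tilde{D}_4+2\tilde{A}_2$, $2\tilde{A}_3+\tilde{A}_2$) can be excluded by standard orthogonal-complement facts: in $E_8$ the roots orthogonal to an $A_2$ form an $E_6$, those orthogonal to $2A_2$ form $2A_2$, and those orthogonal to an $A_3$ form a $D_5$ (inside which the complement of an $A_2$ is $2A_1$). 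For instance, any connected affine piece orthogonal to two triangles would have to lie in a single $A_2$, which has rank $2$, ruling out $\tilde{A}_4+2\tilde{A}_2$, $\tilde{D}_4+2\tilde{A}_2$ and $\tilde{A}_3+2\tilde{A}_2+A_1$; and $3\tilde{A}_2+2A_1$ fails because $A_2$ contains no pair of orthogonal roots. So your method, pushed only slightly further, would prove the whole proposition without any computer input --- strictly more than the paper's proof offers.
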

\begin{proof}
We use the correspondence with the graph $\Gamma$ in \cite{WvL}, where the corresponding cliques have only edges of colors $-1$ and 0; the statement is in Proposition~5.20~(iii).
\end{proof}

\begin{proposition}\label{trans12}
The group $W$ acts transitively on the set of cliques of size 12 in~$G$ with only edges of weights 1 and 2. 
\end{proposition}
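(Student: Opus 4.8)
The plan is to transport the statement to the root system $E$ and then establish transitivity by a finite orbit computation, exactly in the spirit of the proofs of Propositions \ref{transitief} and \ref{max12bvt}. Under the bijection $c\mapsto c+K_X$ between $C$ and $E$, an edge of weight $w$ in $G$ corresponds to an edge of weight $1-w$ in $\Gamma$; in particular weight $1$ becomes $0$ and weight $2$ becomes $-1$. Hence a clique of size $12$ in $G$ with only edges of weights $1$ and $2$ is the same thing as a set of twelve roots in $E$ whose pairwise inner products all lie in $\{0,-1\}$. By Proposition \ref{max12bvt} these are precisely the maximal cliques of this type, so the set on which $W$ acts is the collection $\mathcal{M}$ of all such twelve-element root sets, and the claim is that $W$ has a single orbit on $\mathcal{M}$.

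The direct route is computational. I would fix one explicit clique $K_0\in\mathcal{M}$, written out as twelve exceptional classes in the basis $\{L,E_1,\dots,E_8\}$. The group $W$ is generated by reflections in the roots of $E$, which act on $C$ by explicit linear maps preserving the intersection pairing, so $W$ is available as a concrete finite permutation group on the $240$ elements of $C$. Transitivity is then equivalent to the orbit $W\cdot K_0$ being all of $\mathcal{M}$, and this is checked by comparing two finite numbers: the orbit size $|W\cdot K_0|=|W|/|\mathrm{Stab}_W(K_0)|$, computed by letting the generators act, and the cardinality of $\mathcal{M}$, computed by a direct enumeration of maximal weight-$\{1,2\}$ cliques in $G$ (a bounded search, since Proposition \ref{max12bvt} guarantees that every such maximal clique has size exactly $12$). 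This is the method used throughout \cite{WvL}, so in practice the cleanest step is to invoke the corresponding result there.

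To see conceptually \emph{why} there is a single orbit, and to shrink the computation, I would first try to prove combinatorial rigidity: that every clique in $\mathcal{M}$ has the same underlying weighted graph, meaning that the pattern of which of the $\binom{12}{2}$ pairs carry weight $2$ rather than weight $1$ is, up to relabeling, independent of the chosen clique. Granting this, one extracts from the common pattern a distinguished frame of eight pairwise orthogonal classes, that is, an element of $U$; because $W$ acts transitively on such frames, a standard consequence of the structure of $\mathrm{Pic}\,X$ in \cite{Man74} together with Lemma \ref{BD}, one may move the frame to a standard position, after which only the stabilizer of the frame still acts and a small residual check finishes the identification.

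The main obstacle is precisely this rigidity statement: showing that $\mathcal{M}$ carries a single combinatorial type, so that the Gram matrix of the twelve roots agrees up to permutation in every case. Once that is known, transitivity of $W$ is essentially formal, reducing to its transitivity on the extracted frame together with the residual finite check; without it, one has no choice but the brute-force orbit computation. I expect the rigidity itself to come out of the classification of these maximal cliques carried out with \texttt{magma} in \cite{WvL}, which is why the proof ultimately rests on the results proved there.
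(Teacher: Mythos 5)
Your proposal is correct and matches the paper's approach: the paper's entire proof is the translation of weights $1,2$ in $G$ to weights $0,-1$ in $\Gamma$ and a citation of \cite{WvL}, Proposition 5.21~(i), which is exactly the \texttt{magma}-aided orbit computation you outline and ultimately fall back on. (Only a minor quibble: Proposition \ref{max12bvt} shows every size-12 clique is maximal and rules out maximal cliques of size 11, but does not by itself say \emph{all} maximal cliques of this type have size 12; this is inessential to your argument.)
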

\begin{proof}
This is \cite{WvL}, Proposition 5.21 (i).
\end{proof}

\begin{proposition}\label{trans11}
The group $W$ acts transitively on the set of cliques of size 11 in~$G$ with only edges of weights 1 and 2. 
\end{proposition}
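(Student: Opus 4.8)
The plan is to deduce transitivity on size-$11$ cliques from the already-available transitivity on size-$12$ cliques (Proposition~\ref{trans12}), using Proposition~\ref{max12bvt} as the bridge. Since Proposition~\ref{max12bvt} asserts that there are \emph{no} maximal cliques of size $11$ with edges of weights $1$ and $2$, every such size-$11$ clique $S$ fails to be maximal and is therefore properly contained in a larger clique with edges of weights $1$ and $2$; as the maximal size of such cliques is $12$, this forces $S$ to be contained in a clique of size exactly $12$. Thus every size-$11$ clique arises from some size-$12$ clique by deleting a single vertex. Recall also that each $w\in W$ preserves the intersection pairing, so it sends cliques with edges of weights $1$ and $2$ to cliques of the same type, bijectively and preserving cardinality.

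Granting this, I would fix once and for all a reference clique $T_0$ of size $12$ with edges of weights $1$ and $2$, and let $H=\mathrm{Stab}_W(T_0)$ be its stabilizer in $W$. Given two size-$11$ cliques $S_1$ and $S_2$, choose size-$12$ cliques $T_1\supseteq S_1$ and $T_2\supseteq S_2$ as above. By Proposition~\ref{trans12} there are $w_1,w_2\in W$ with $w_i(T_i)=T_0$. Then $w_1(S_1)\subseteq T_0$ and $w_2(S_2)\subseteq T_0$ are each obtained from $T_0$ by deleting one vertex, say $w_1(S_1)=T_0\setminus\{v_1\}$ and $w_2(S_2)=T_0\setminus\{v_2\}$. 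If $H$ acts transitively on the twelve vertices of $T_0$, there is $\sigma\in H$ with $\sigma(v_1)=v_2$; since $\sigma$ fixes $T_0$ setwise, it maps $T_0\setminus\{v_1\}$ to $T_0\setminus\{v_2\}$, and hence $w=w_2^{-1}\sigma w_1$ satisfies $w(S_1)=S_2$. This proves the proposition.

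The whole argument therefore reduces to the single claim that the stabilizer $H$ of a size-$12$ clique acts transitively on its twelve vertices, and this is where I expect the main difficulty to lie. A priori the vertices of $T_0$ could split into several $H$-orbits, in which case deleting vertices from distinct orbits would produce size-$11$ cliques in different $W$-orbits and the statement would fail; so everything hinges on showing that no vertex of $T_0$ is distinguished. I would settle this by writing down one explicit size-$12$ clique as a concrete set of twelve exceptional classes in the basis $\{L,E_1,\ldots,E_8\}$ of Proposition~\ref{exceptional}, computing its stabilizer in $W$, and checking that the induced permutation action on the twelve vertices is transitive --- a \texttt{magma} computation in the same spirit as those behind Propositions~\ref{trans12} and \ref{max12bvt} in \cite{WvL}. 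If one prefers a structural argument, the alternative is to exhibit enough elements of $W$ that fix $T_0$ setwise and permute its vertices so as to generate a vertex-transitive subgroup, using the reflections generating $W$ together with the symmetry of the corresponding configuration of twelve roots (pairwise of inner product $0$ or $-1$) in $E\cong E_8$.
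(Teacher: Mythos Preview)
Your argument is correct and matches the paper's proof exactly: reduce to size-$12$ cliques via Proposition~\ref{max12bvt}, use Proposition~\ref{trans12} for transitivity on those, and then invoke transitivity of the stabilizer of a size-$12$ clique on its vertices. The paper obtains that last fact by citing Corollary~5.22 of \cite{WvL} rather than by a fresh computation, so you need not carry out the \texttt{magma} check yourself.
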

\begin{proof}
By Proposition \ref{max12bvt}, any clique of size 11 with only edges of weights 1 and 2 is contained in a clique of size 12 with only edges of weights 1 and 2. By Corollary~5.22 in \cite{WvL}, for such a clique $K$ of size 12, the stabilizer $W_K$ acts transitively on~$K$, which implies that $W_K$ also acts transitively on the set of cliques of size 11 within~$K$. Since $W$ acts transitively on the set of all cliques of size 12 with only edges of weights~1 and 2 by Proposition \ref{trans12}, the statement follows. 
\end{proof}

Now that we know which cliques in $G$ to look at and what their maximal size is, we show that ten specific exceptional curves on $X$ are not concurrent in Proposition~\ref{key2}.  

\begin{remark}\label{uniquedingen}It is well known that two distinct points in $\mathbb{P}^2$ define a unique line, and five points in $\mathbb{P}^2$ in general position define a unique conic. Now let $R_1,\ldots,R_8$ be eight distinct points in $\mathbb{P}^2$ in general position. The linear system $\mathcal{Q}$ of quartics in~$\mathbb{P}^2$ has dimension $14$. For $R_i,R_j,R_l\in\{R_1,\ldots,R_8\}$, requiring a quartic to contain $R_1,\ldots,R_8$ and be singular in in $R_i,R_j,R_l$ gives $8+3\cdot2=14$ linear relations. Since the eight points are in general position, the 14 linear conditions are linearly independent, so this gives a zero-dimensional linear subsystem of $\mathcal{Q}$. Hence there is a unique quartic containing all eight points that is singular in $R_i,R_j,R_l$. 
\end{remark}

Let $R_1,\ldots,R_8$ be eight points in $\mathbb{P}^2$ in general position. Remark \ref{uniquedingen} allows us to define the following curves.
\begin{align*}
&L_1\mbox{ is the line through }R_1\mbox{ and }R_2;\\
&L_2\mbox{ is the line through }R_3\mbox{ and }R_4;\\
&C_1\mbox{ is the conic through }R_1,\;R_3,\;R_5,\;R_6\mbox{ and }R_7;\\
&C_2\mbox{ is the conic through }R_1,\;R_4,\;R_5,\;R_6\mbox{ and }R_8;\\
&C_3\mbox{ is the conic through }R_2,\;R_3,\;R_5,\;R_7\mbox{ and }R_8;\\
&C_4\mbox{ is the conic through }R_2,\;R_4,\;R_6,\;R_7\mbox{ and }R_8;\\
&D_1\mbox{ is the quartic through all eight points with singular points in }R_1,\;R_7\mbox{ and }R_8;\\
&D_2\mbox{ is the quartic through all eight points with singular points in }R_2,\;R_5\mbox{ and }R_6;\\
&D_3\mbox{ is the quartic through all eight points with singular points in }R_3,\;R_6\mbox{ and }R_8;\\
&D_4\mbox{ is the quartic through all eight points with singular points in }R_4,\;R_5\mbox{ and }R_7.
\end{align*}

\begin{proposition}\label{key2}
Assume that the characteristic of $k$ is not 3. Then the ten curves $L_1,\;L_2,\;C_1,\ldots C_4,\;D_1,\ldots,D_4$ are not concurrent.
\end{proposition}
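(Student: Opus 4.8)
The plan is to follow the strategy of Proposition~\ref{key1}. Suppose, for contradiction, that some point $P$ lies on all ten curves. Since $L_1$ and $L_2$ both contain $P$, and the general position of $R_1,R_2,R_3,R_4$ forces $L_1\neq L_2$, the point is pinned down: $P=L_1\cap L_2$ is their unique intersection. First I would verify that $P$ equals none of the $R_i$ (otherwise three of the eight points would be collinear), and record the general-position facts about the subsets of $\{R_1,\dots,R_8,P\}$ that the determinant criteria below require.

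I would then normalize coordinates with an automorphism of $\mathbb{P}^2$: put $P=(0:0:1)$, $L_1=\{y=0\}$, $L_2=\{x=0\}$, and then fix $R_1=(1:0:0),\,R_2=(1:0:1)$ on $L_1$ and $R_3=(0:1:0),\,R_4=(0:1:1)$ on $L_2$. A short computation shows that this exhausts all of $\mathrm{PGL}_3$, leaving the coordinates of $R_5,R_6,R_7,R_8$ as free parameters; recording them gives an affine space $\mathbb{A}^8$ parametrizing the admissible configurations, exactly as $\mathbb{A}^6$ appeared in Proposition~\ref{key1}.

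Next I would convert the eight remaining incidences into determinantal equations via Lemma~\ref{genpos1}. The condition $P\in C_i$ says that six points lie on a conic and is governed by part~(ii); after the normalization the three known points on each conic lie among the coordinate vertices and $R_2,R_4$, so the condition collapses to a $3\times3$ determinant in the coordinates of three of the four free points. The condition $P\in D_j$ says that the nine points $R_1,\dots,R_8,P$ lie on a quartic with the three prescribed double points; since such a quartic is unique by Remark~\ref{uniquedingen}, part~(iv) shows $P\in D_j$ is equivalent to the vanishing of the associated $15\times15$ determinant $\det H_j$. Together these cut out an algebraic set $A_0\subseteq\mathbb{A}^8$, and with $S_0$ the locus where three of the $R_i$ are collinear or six lie on a conic, the goal becomes $A_0\subseteq S_0$.

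Finally, as in Proposition~\ref{key1}, I would use the simple conic equations to solve rationally for several coordinates and descend, via a section like the map $i$ there, to a much smaller affine space; substituting into the quartic determinants and factoring, I expect every non-constant factor to define a component of $S_0$ (three points collinear, or six on a conic). The hypothesis $\mathrm{char}\,k\neq3$ should enter precisely here: the remaining determinant ought to carry an overall constant factor divisible by $3$ (the analogue of the factor $-2$ in expression~(\ref{detl})), so that in characteristic $\neq3$ its vanishing is equivalent to the vanishing of a degeneracy factor, whereas in characteristic $3$ it vanishes identically, which is what permits the larger bound of Theorem~\ref{thm2}. The main obstacle is computational: the matrices $H_j$ contain rows of partial derivatives at the singular points, so both the elimination and the factorization are heavy and must be carried out in \texttt{magma}; as warned in Remark~\ref{groebner1}, a direct Groebner-basis check that $A_0\subseteq S_0$ is likely infeasible, so the real work lies in choosing the eliminating equations so that the final determinant factors transparently, and in confirming that every denominator introduced is invertible away from $S_0$.
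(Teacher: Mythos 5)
Your scaffolding (argue by contradiction, pin down $P=L_1\cap L_2$, normalize the five points $P,R_1,R_2,R_3,R_4$ by a projective transformation, translate the remaining incidences into determinantal conditions via Lemma~\ref{genpos1}, and reduce to showing $A_0\subseteq S_0$ after rational elimination) matches the paper's strategy, and your observation that each condition $P\in C_i$ collapses to a $3\times3$ determinant is sound. The genuine gap is in your final step. You expect the pattern of Proposition~\ref{key1} to persist: that after elimination, \emph{each} quartic determinant factors so that every non-constant factor defines a component of $S_0$. This is false here. In the paper's computation, each $\det(H_i)$, pulled back to the final four-dimensional affine space, contains exactly \emph{one} irreducible factor $g_i$ that does \emph{not} correspond to any degeneracy of the eight points; no single condition $P\in D_i$ forces a degeneration. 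The non-concurrency emerges only from imposing all four conditions simultaneously: one must prove $Z(g_1,g_2,g_3,g_4)\subseteq S_1$, and the paper does this by exhibiting an explicit product $\delta$ of degeneracy polynomials and verifying the ideal membership $\delta\in(g_1,g_2,g_3,g_4)$ with a Gr\"obner-basis computation over $\mathbb{Q}$. This simultaneous ideal-membership step is the missing idea; as written, your argument stalls exactly where the four factors $g_i$ appear.

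Relatedly, your proposed mechanism for the hypothesis char~$k\neq3$ --- an overall constant factor divisible by $3$ in the final determinant, analogous to the $-2$ in expression~(\ref{detl}) --- is not what occurs: the constant factors that actually arise are powers of $2$, and characteristic $2$ is handled by running the whole computation a second time with a different normalization (your choice of base points, incidentally, avoids that issue since it involves no sign). Characteristic $3$ instead enters through the positive-characteristic bookkeeping of the Gr\"obner computation: the membership $\delta\in(g_1,\ldots,g_4)$ is certified over $\mathbb{Q}$, it reduces modulo $p$ for every prime $p$ outside the finite set $\mathcal{P}$ of primes dividing the denominators produced along the way, and the finitely many remaining primes other than $2$ and $3$ are checked individually. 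The membership genuinely fails in characteristic $3$ --- as it must, since Example~\ref{char3} exhibits these ten curves concurrent in characteristic $3$ with the eight points in general position --- so the role of char~$k\neq3$ cannot be located in a single determinant factor; it lives in the arithmetic of the ideal membership.
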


\begin{remark}\label{groebner2}
As in the case of Proposition \ref{key1}, in theory we could prove Proposition~\ref{key2} with a computer by using Gr\"{o}bner bases, but in practice, this is undoable since the computations become too big (see also Remark \ref{groebner1}). In the case of Proposition \ref{key2} the computations become even bigger, since we now have 10 curves to check, of which four are of degree 4, in contrast to the 7 curves of degrees at most~3 in Proposition \ref{key1}.  
\end{remark}

Before we write down the proof of Proposition \ref{key2}, we make some reductions. In $\mathbb{P}^2$, we can choose four points in general position. Fix these and call them $Q_1,Q_5,Q_6,$ and $X$. We are interested in those configurations of five points $Q_2,Q_3,Q_4,Q_7$ and $Q_8$ in $\mathbb{P}^2$ such that the following 11 conditions hold.
\begin{align*}
0) &\mbox{ The points }Q_1,\ldots,Q_8\mbox{ are in general position.}\\
1) &\mbox{ There is a line through }X,Q_1,Q_2.\\
2) &\mbox{ There is a line through }X,Q_3,Q_4.\\
3) &\mbox{ There is a conic through }X,Q_1,Q_3,Q_5,Q_6,Q_7.\\
4) &\mbox{ There is a conic through }X,Q_1,Q_4,Q_5,Q_6,Q_8.\\
5) &\mbox{ There is a conic through }X,Q_2,Q_3,Q_5,Q_7,Q_8.\\
6) &\mbox{ There is a conic through }X,Q_2,Q_4,Q_6,Q_7,Q_8.\\
7) &\mbox{ There is a quartic through all nine points that is singular in }Q_1,Q_7,Q_8.\\
8) &\mbox{ There is a quartic through all nine points that is singular in }Q_2,Q_5,Q_6.\\
9) &\mbox{ There is a quartic through all nine points that is singular in }Q_3,Q_6,Q_8.\\
10) &\mbox{ There is a quartic through all nine points that is singular in }Q_4,Q_5,Q_7.
\end{align*}
 
We will prove Proposition \ref{key2} by showing that there are no such configurations: all of the configurations satisfying 1-10 are such that condition 0 is violated. 

\vspace{11pt}

We consider the space $(\mathbb{P}^2)^5$. Within this space, we define the following two sets.
$$Y=\left\{\left(Q_2,Q_3,Q_4,Q_7,Q_8\right)\in(\mathbb{P}^2)^5\;\left|\;\mbox{conditions 1-5 are satisfied}\right.\right\}.$$
$$S=\left\{\left(Q_2,Q_3,Q_4,Q_7,Q_8\right)\in(\mathbb{P}^2)^5\;\left|\;\mbox{three of the points }Q_1,\ldots,Q_8\mbox{ are collinear}\right.\right\}.$$

\vspace{11pt}

Note that for an element $(Q_2,Q_3,Q_4,Q_7,Q_8)$ in $S$, condition 0 is violated. Let~$F_1$ be the linear system of conics through $X,Q_1,Q_5,Q_6$. Note that this is a one-dimensional linear system that is isomorphic to $\mathbb{P}^1$. Let $F_2$ be the linear system of lines through $X$, which is also isomorphic to $\mathbb{P}^1$. We will show in Proposition \ref{inversephi} that there is a bijection between $Y\setminus S$ and a subset of $F_1^2\times F_2^3$. We start with two lemmas.

\begin{lemma}\label{QP}
If $(Q_2,Q_3,Q_4,Q_7,Q_8)$ is a point in $Y\setminus S$, then we have $Q_i\neq X$ for $i=2,3,4,7,8$.
\end{lemma}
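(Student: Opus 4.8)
The plan is to argue by contradiction, exploiting the fact that conditions 1 and 2 in the definition of $Y$ are themselves collinearity statements about $X$ together with pairs of the $Q_i$. First I would record a preliminary observation that supplies distinctness for free: if any two of the eight points $Q_1,\dots,Q_8$ were to coincide, say $Q_a=Q_b$ with $a\neq b$, then $Q_a,Q_b$ together with any third of the eight points lie on a common line, so three of the eight points are collinear and the configuration lies in $S$. Hence every point of $Y\setminus S$ has the property that $Q_1,\dots,Q_8$ are pairwise distinct; this is the only non-degeneracy input the argument needs, and it is exactly what the definition of $S$ hands us.

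Next I would assume, for contradiction, that $X=Q_i$ for some $i\in\{2,3,4,7,8\}$, and split into two cases according to which of the two line conditions survives without repeating the index $i$. Since the configuration lies in $Y$, condition 1 asserts that $X,Q_1,Q_2$ are collinear and condition 2 asserts that $X,Q_3,Q_4$ are collinear. If $i=2$, substituting $X=Q_2$ into condition 2 shows that $Q_2,Q_3,Q_4$ are collinear; if instead $i\in\{3,4,7,8\}$, substituting $X=Q_i$ into condition 1 shows that $Q_i,Q_1,Q_2$ are collinear. One must use condition 2 (rather than 1) when $i=2$, since condition 1 would then involve only the two distinct points $Q_1$ and $Q_2=X$; symmetrically, for $i\in\{3,4\}$ one uses condition 1 rather than condition 2.

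In either case the three collinear points produced are three genuinely distinct members of $\{Q_1,\dots,Q_8\}$, by the preliminary distinctness observation. Therefore the configuration lies in $S$, contradicting the hypothesis that it lies in $Y\setminus S$. This forces $Q_i\neq X$ for every $i\in\{2,3,4,7,8\}$, as claimed.

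I do not expect a serious obstacle: once one notices that the line conditions 1 and 2 directly encode collinearity of three of the eight points the moment $X$ is identified with one of them, the rest is bookkeeping. The only point requiring care — and the step I would spell out most explicitly — is the matching between the index $i$ and the condition used, so that the resulting triple is not accidentally a pair with a repeated entry; the distinctness observation made at the outset is precisely what guarantees the triple consists of three honestly distinct collinear points and hence really lands in $S$.
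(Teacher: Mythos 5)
Your proof is correct and takes essentially the same route as the paper: you use condition 1 to handle $X=Q_i$ for $i\in\{3,4,7,8\}$ and condition 2 to handle $X=Q_2$, in each case producing three collinear points among $Q_1,\ldots,Q_8$ and hence a contradiction with $Q\notin S$. Your preliminary distinctness observation is harmless extra care that the paper omits, since membership in $S$ only requires three of the indexed points $Q_1,\ldots,Q_8$ to lie on a common line, which holds for the triples you produce regardless of whether any of them coincide.
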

\begin{proof}
Take a point $Q=(Q_2,Q_3,Q_4,Q_7,Q_8)$ in $Y\setminus S$. Since $Q$ is an element of~$Y$, by condition 1 the points $X,Q_1,Q_2$ are on a line. That means that if $X=Q_i$ for $i=3,4,7,8$, the points $Q_i,Q_1,Q_2$ would be on a line, contradicting the fact that~$Q$ is not in $S$. Moreover, by condition 2, the points $X,Q_3,Q_4$ are on a line, so if $X=Q_2$ then $Q_2,Q_3,Q_4$ are on a line, again contradicting the fact that $Q$ is not in~$S$.
\end{proof}

The following result is well known, but we include a proof, as we could not find a reference for this exact statement.

\begin{lemma}\label{uniqueC}
If $S_1,\ldots,S_5$ are five distinct points in $\mathbb{P}^2$, such that $S_1,\ldots,S_4$ are in general position, then there is a unique conic containing $S_1,\ldots,S_5$. This conic is irreducible if all five points are in general position.
\end{lemma}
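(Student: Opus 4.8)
The plan is to separate the statement into an existence claim, a uniqueness claim, and the irreducibility claim, treating the first two by a dimension count together with B\'ezout's theorem, and the last by a pigeonhole argument.

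First I would establish existence. The conics in $\mathbb{P}^2$ (allowing degenerate ones) are parametrized by the projective space $\mathbb{P}^5$ of nonzero homogeneous degree-two forms in $x,y,z$ up to scaling, since such a form has six coefficients. For a fixed point $R$, the condition that a conic passes through $R$ is the vanishing of the form at $R$, which is one linear equation in the six coefficients. Imposing passage through the five points $S_1,\ldots,S_5$ therefore cuts out a linear subspace of $\mathbb{P}^5$ of projective dimension at least $5-5=0$, so at least one conic through the five points exists. Note that this step uses nothing about general position.

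Next I would prove uniqueness. Suppose, for contradiction, that two distinct conics $C$ and $C'$ both contain $S_1,\ldots,S_5$. If $C$ and $C'$ had no common component, then by B\'ezout's theorem they would meet in at most $2\cdot 2=4$ points, contradicting that they share the five distinct points $S_1,\ldots,S_5$. Hence $C$ and $C'$ share a common line $L$, so $C=L\cup L_1$ and $C'=L\cup L_2$ for lines $L_1,L_2$. If $L_1=L_2$ then $C=C'$, a contradiction; so $L_1\neq L_2$, and then any of the five points not lying on $L$ must lie on both $L_1$ and $L_2$, hence on the single point $L_1\cap L_2$. Since the $S_i$ are distinct, at most one of them lies off $L$, so at least four of the five points lie on $L$; among these four, at least three belong to $\{S_1,\ldots,S_4\}$, forcing three of $S_1,\ldots,S_4$ to be collinear and contradicting their general position. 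This proves uniqueness.

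Finally I would treat irreducibility, now assuming all five points are in general position. Let $C$ be the unique conic. A reducible or non-reduced conic is a union of two lines, possibly equal. If $C=\ell\cup\ell'$ with $\ell\neq\ell'$, then assigning each of the five points to a line through it and applying the pigeonhole principle yields at least $\lceil 5/2\rceil=3$ points on one line, contradicting that no three of $S_1,\ldots,S_5$ are collinear; the double-line case $\ell=\ell'$ puts all five on a single line and is even worse. Hence $C$ is irreducible. I expect the only delicate point to be the uniqueness argument: one must resist concluding $C=C'$ directly from B\'ezout and instead carefully handle the case of a shared component, where general position of just the first four points is exactly what is needed to rule out the degenerate configuration.
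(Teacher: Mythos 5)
Your proof is correct and follows essentially the same route as the paper: existence by a linear-conditions dimension count, uniqueness by noting that two distinct conics through five distinct points must share a line and then using general position of $S_1,\ldots,S_4$ to rule this out, and irreducibility from the fact that no three of the five points are collinear. The only differences are cosmetic (you count conditions directly in $\mathbb{P}^5$ rather than via the pencil through $S_1,\ldots,S_4$, and you run the shared-line case analysis contrapositively, bounding points off the line instead of on it), so nothing further is needed.
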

\begin{proof}
The linear system of conics containing $S_1,\ldots,S_4$ is one-dimensional and has only these four points as base points. Requiring for a conic in this linear system to contain the point $S_5$ gives a linear condition, and since $S_5$ is different from $S_1,\ldots,S_4$, this condition defines a linear subspace of dimension at least zero. If there were two distinct conics in this subspace, they would intersect in 5 distinct points, so they would have a common component, which is a line. Since no 4 of the points $S_1,\ldots, S_5$ are collinear, there are at most 3 of the 5 points on this line. But then the other two points uniquely determine the second component of both conics, contradicting that they are distinct. We conclude that there is a unique conic containing $S_1,\ldots, S_5$. If, moreover, $S_5$ is such that all five points are in general position, then no three of them are collinear by definition, so the unique conic containing them cannot contain a line, hence it is irreducible. 
\end{proof}

Let $(Q_2,Q_3,Q_4,Q_7,Q_8)$ be a point in $Y\setminus S$. Note that by condition 3, there is a conic through the points $X,Q_1,Q_3,Q_5,Q_6$, and $Q_7$, and by Lemma \ref{uniqueC} it is unique, since $X, Q_1, Q_5, Q_6$ are in general position. We call this conic $A_1$. By the same reasoning and condition 4, there is a unique conic containing the points $X, Q_1,Q_4,Q_5,Q_6,Q_8$. We call this conic $A_2$. 
By Lemma \ref{QP}, the points $Q_3,Q_7,Q_8$ are all different from $X$, so we can define the line $M_1$ through $X$ and $Q_3$, the line $M_2$ through $X$ and $Q_7$, and the line $M_3$ through $X$ and $Q_8$. 

\vspace{11pt}

We now define a map
\begin{align*}\varphi\colon Y\setminus S&\longrightarrow F_1^2\times F_2^3,\\
(Q_2,Q_3,Q_4,Q_7,Q_8)&\longmapsto(A_1,A_2,M_1,M_2,M_3).
\end{align*}

Note that $\varphi$ is well defined by the definitions of $A_1,A_2,M_1,M_2,M_3$. We want to describe its image. To this end, define the set 

$$U=\left\{(B_1,B_2,N_1,N_2,N_3)\in F_1^2\times F_2^3\;\left|\begin{array}{c}
B_1,B_2\mbox{ irreducible}\\
B_1\neq B_2\\
N_1,N_2\mbox{ not tangent to }B_1\\
N_1,N_3\mbox{ not tangent to }B_2\\
N_1\neq N_2,N_3\\
Q_1,Q_5,Q_6\not\in N_1,N_2,N_3\\\end{array}\right.\right\}$$

\vspace{11pt}

\begin{lemma}\label{imagephi}
The image of $\varphi$ is contained in $U$.
\end{lemma}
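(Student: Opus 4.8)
The plan is to verify that for any point $(Q_2,Q_3,Q_4,Q_7,Q_8)\in Y\setminus S$, its image $\varphi(Q_2,Q_3,Q_4,Q_7,Q_8)=(A_1,A_2,M_1,M_2,M_3)$ satisfies each of the six defining conditions of $U$ in turn. I would go through them one at a time, using the fact that the point lies in $Y\setminus S$, so conditions 1--5 hold while no three of $Q_1,\ldots,Q_8$ are collinear.

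First I would establish irreducibility of $A_1$ and $A_2$. By condition 3, $A_1$ is the unique conic through $X,Q_1,Q_3,Q_5,Q_6,Q_7$ (uniqueness from Lemma \ref{uniqueC} applied to the five points $X,Q_1,Q_5,Q_6,Q_7$, of which $X,Q_1,Q_5,Q_6$ are in general position). To invoke the irreducibility clause of Lemma \ref{uniqueC} I must check that these five points are in general position, i.e.\ no three collinear; but any three among $X,Q_1,Q_3,Q_5,Q_6,Q_7$ being collinear would, since these are among $Q_1,\ldots,Q_8$ together with $X$, contradict that the point avoids $S$ (and the general-position hypothesis on $Q_1,\ldots,Q_8$). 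The same argument handles $A_2$. Next, for $A_1\neq A_2$: the conic $A_1$ passes through $Q_3,Q_7$ and $A_2$ through $Q_4,Q_8$, but if they coincided the single conic would contain $X,Q_1,Q_3,Q_4,Q_5,Q_6,Q_7,Q_8$, which is seven of the eight points $Q_i$ plus $X$ on one conic --- forcing a violation of general position (six of the $Q_i$ on a conic), so $A_1\neq A_2$.

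For the tangency and incidence conditions I would argue as follows. The lines $M_1,M_2,M_3$ are the lines through $X$ and $Q_3,Q_7,Q_8$ respectively (well defined since $Q_3,Q_7,Q_8\neq X$ by Lemma \ref{QP}). To see $N_1=M_1$ is not tangent to $B_1=A_1$: the line $M_1$ already meets $A_1$ in the two distinct points $X$ and $Q_3$ (distinct because $Q_3\neq X$), so it is a secant, not a tangent; similarly $M_2$ meets $A_1$ in $X$ and $Q_7$, and $M_1,M_3$ meet $A_2$ in $\{X,Q_3\}$ and $\{X,Q_8\}$ respectively. For $N_1\neq N_2,N_3$, i.e.\ $M_1\neq M_2$ and $M_1\neq M_3$: equality $M_1=M_2$ would put $X,Q_3,Q_7$ on a line, and $M_1=M_3$ would put $X,Q_3,Q_8$ on a line, both forbidden by $S$. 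Finally, $Q_1,Q_5,Q_6\notin M_1,M_2,M_3$: if say $Q_1\in M_1$, then $X,Q_3,Q_1$ would be collinear, again contradicting the avoidance of $S$, and likewise for the remaining eight incidences.

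I expect no single step to be a deep obstacle; the proof is a careful bookkeeping exercise, and the main point requiring attention is simply to make sure that every tangency, distinctness, and non-incidence claim is reduced to a statement ``three of the points $Q_1,\ldots,Q_8$ are collinear'' or ``six of the $Q_i$ lie on a conic,'' each of which is excluded by membership in $Y\setminus S$ together with the standing general-position hypothesis. The one place to be slightly careful is the non-tangency conditions, where I must exhibit \emph{two distinct} intersection points of the relevant line with the conic (using $Q_i\neq X$ from Lemma \ref{QP}) rather than merely a single incidence, so that secancy rather than tangency is guaranteed.
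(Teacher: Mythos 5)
Your outline coincides with the paper's proof---go through the six defining conditions of $U$ one at a time, and rule out tangency by exhibiting two distinct intersection points---but several of your justifications appeal to hypotheses that are not available, and one step fails outright. The only hypotheses in force are that conditions 1--5 hold and that no three of $Q_1,\ldots,Q_8$ are collinear (avoidance of $S$). Membership in $Y\setminus S$ does \emph{not} give general position of $Q_1,\ldots,Q_8$: that is condition 0, which is precisely what the proof of Proposition \ref{key2} is trying to show fails, so it cannot be assumed; and $S$ says nothing about collinear triples involving the auxiliary point $X$, since $X$ is not one of the $Q_i$. Your argument for $A_1\neq A_2$ breaks on the first point: you derive ``six of the $Q_i$ on a conic'' and declare this a violation, but nothing in $Y\setminus S$ excludes six of the $Q_i$ lying on a conic, so no contradiction has been reached. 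The paper's argument instead uses condition 2: if $A_1=A_2$, this single conic contains $X,Q_3,Q_4$, which are collinear, contradicting irreducibility. You never invoke the collinearity of $X,Q_3,Q_4$, and it is exactly the missing ingredient.

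The second point infects several other steps. The triples ``$X,Q_3,Q_7$ collinear'' (your argument for $M_1\neq M_2$), ``$X,Q_3,Q_1$ collinear'' (your argument for $Q_1\notin M_1$), and triples such as $X,Q_5,Q_7$ in your irreducibility check are \emph{not} forbidden by $S$. They can be excluded, but only with extra work: either trade $X$ for another $Q_i$ using conditions 1 and 2 (the line through $X$ and $Q_1$ contains $Q_2$; the line through $X$ and $Q_3$ contains $Q_4$), or use irreducibility plus B\'ezout (a line meeting an irreducible conic in three distinct points is impossible); the paper does both. For irreducibility itself, the clean route is to apply Lemma \ref{uniqueC} to the five points $Q_1,Q_3,Q_5,Q_6,Q_7$ (no $X$ among them), whose general position \emph{does} follow from avoiding $S$; your choice of five points including $X$ forces you into the unjustified claims above. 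Finally, a local error: $M_1$ meets $A_2$ in $X$ and $Q_4$ (by conditions 2 and 4), not in $X$ and $Q_3$; the point $Q_3$ need not lie on $A_2$ at all, and here again condition 2 is the ingredient you never use. Each gap is repairable, but the repairs are essentially the content of the paper's proof, so as written the proposal does not establish the lemma.
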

\begin{proof}
Take a point $Q=(Q_2,Q_3,Q_4,Q_7,Q_8)\in Y\setminus S$ and consider its image under $\varphi$ given by  $\varphi(Q)=(A_1,A_2,M_1,M_2,M_3)$. Since $Q$ is not in $S$, by Lemma \ref{uniqueC}, the conics $A_1$  and $A_2$ are unique and irreducible. Moreover, if they were equal to each other, then they would both contain the points $X,Q_3,Q_4$, which are collinear by condition 2, contradicting the fact that they are irreducible. \\
The line $M_1$ is tangent to $A_1$ only if $X$ is equal to $Q_3$, the line $M_2$ is tangent to $A_1$ only if $X$ is equal to $Q_7$, and the line $M_3$ is tangent to $A_2$ only if $X$ is equal to $Q_8$, all of which are impossible by Lemma \ref{QP}. Note that by condition 2, the line $M_1$ contains $Q_4$, so $M_1$ is tangent to $A_2$ only if $X=Q_4$, which is again impossible by Lemma \ref{QP}. If $M_2$ or $M_3$ were equal to $M_1$, then either $Q_7$ or $Q_8$ is contained in $M_1$, which also contains the points $X,Q_3,Q_4$. But this can not be true since $Q$ is not in $S$. If $M_1$ or $M_2$ contained any of the points $Q_1,Q_5,Q_6$, then this line would have three points in common with $A_1$, which implies that $A_1$ contains a line, contradicting the fact that $A_1$ is irreducible. Similarly, if $M_3$ contained $Q_1,Q_5,$ or $Q_6$, then $A_2$ would contain $M_3$, contradicting the irreducibility of $A_2$. 
\end{proof}

We want to define an inverse to $\varphi$. Let $A=(A_1,A_2,M_1,M_2,M_3)$ be a point in~$U$. Since the conics $A_1$ and $A_2$ are irreducible, they do not contain any of the lines $M_1,M_2,M_3$, and moreover, since $M_1,M_2$ are not tangent to $A_1$, and $M_1,M_3$ are not tangent to $A_2$, we can define the following five points in $\mathbb{P}^2$.
\begin{align*}
Q_3=&\mbox{ the point of intersection of }A_1\mbox{ with }M_1\mbox{ that is not }X.\\
Q_4=&\mbox{ the point of intersection of }A_2\mbox{ with }M_1\mbox{ that is not }X.\\
Q_7=&\mbox{ the point of intersection of }A_1\mbox{ with }M_2\mbox{ that is not }X.\\
Q_8=&\mbox{ the point of intersection of }A_2\mbox{ with }M_3\mbox{ that is not }X.
\end{align*}

\begin{lemma}\label{uniqueconic}There is a unique conic through $X, Q_3, Q_5, Q_7$, and $Q_8$, which does not contain the line through $X$ and $Q_1$. 
\end{lemma}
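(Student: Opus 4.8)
The plan is to obtain the conic from Lemma \ref{uniqueC} applied to the five points $X, Q_3, Q_5, Q_7, Q_8$, taking the first four as $S_1,\dots,S_4$, and then to exclude by hand the possibility that it degenerates into a pair of lines one of which is the line $L_0$ through $X$ and $Q_1$. For the existence and uniqueness I must verify the two hypotheses of Lemma \ref{uniqueC}: that the five points are pairwise distinct, and that $X, Q_3, Q_5, Q_7$ are in general position. Both follow by unwinding the defining conditions of the set $U$, together with Lemma \ref{QP} and the observation that, since $A_1$ and $A_2$ are distinct irreducible conics sharing the four base points $X, Q_1, Q_5, Q_6$ of $F_1$, Bézout forces $A_1\cap A_2=\{X, Q_1, Q_5, Q_6\}$.

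For distinctness I would argue as follows. The inequalities $X\neq Q_3, Q_7, Q_8$ hold by construction, and $X\neq Q_5$ because $X,Q_1,Q_5,Q_6$ are in general position. The points $Q_3, Q_7, Q_8$ lie on $M_1, M_2, M_3$ respectively, whereas the condition $Q_1,Q_5,Q_6\notin M_1,M_2,M_3$ in $U$ keeps $Q_5$ off each line, so $Q_5$ differs from all three. Since $M_1,M_2,M_3$ pass through $X$ and $M_1\neq M_2,M_3$ (from $N_1\neq N_2,N_3$), any two of these lines meet only at $X$, which gives $Q_3\neq Q_7$ and $Q_3\neq Q_8$. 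The one delicate coincidence is $Q_7=Q_8$, which the conditions of $U$ do not directly exclude (they permit $M_2=M_3$); here I use $A_1\cap A_2=\{X, Q_1, Q_5, Q_6\}$, so $Q_7=Q_8$ would put this point in $\{X,Q_1,Q_5,Q_6\}$, contradicting $Q_7\neq X$ together with $Q_1,Q_5,Q_6\notin M_2$.

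For the general position of $X, Q_3, Q_5, Q_7$ I would check the four triples: the line $XQ_3$ equals $M_1$, and $Q_5,Q_7\notin M_1$; the line $XQ_7$ equals $M_2$, and $Q_5\notin M_2$; and $Q_3,Q_5,Q_7$ all lie on the irreducible conic $A_1$, so they cannot be collinear. Hence no three of the four are collinear, and Lemma \ref{uniqueC} produces the unique conic $C$ through $X, Q_3, Q_5, Q_7, Q_8$. To finish, suppose $C$ contained $L_0$, so that $C=L_0\cup L'$ for some line $L'$. Then $Q_3\notin L_0$, since otherwise $L_0=M_1$ (both pass through $X$ and $Q_3$), forcing $Q_1\in M_1$ against $Q_1\notin M_1$; the same argument with $M_2$ gives $Q_7\notin L_0$; and $Q_5\in L_0$ would make $X,Q_1,Q_5$ collinear, against general position. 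Thus $Q_3,Q_5,Q_7\in L'$, three collinear points on the irreducible conic $A_1$, a contradiction, so $C$ does not contain $L_0$.

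The main obstacle I anticipate is purely the bookkeeping of distinctness and non-collinearity, and specifically the case $Q_7=Q_8$: the line-intersection arguments that dispatch the other coincidences break down precisely when $M_2=M_3$, and one is forced to pass through the intersection $A_1\cap A_2=\{X,Q_1,Q_5,Q_6\}$. Once that identification is recorded, every other step is a direct reading-off of the conditions defining $U$ and the irreducibility of $A_1$, so I expect no further difficulty.
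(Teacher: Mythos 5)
Your proof is correct and follows essentially the same route as the paper: both verify the hypotheses of Lemma \ref{uniqueC} for the five points $X,Q_3,Q_5,Q_7,Q_8$ (distinctness plus general position of the first four, the latter coming down to the fact that they lie on the irreducible conic $A_1$) and then rule out the line through $X$ and $Q_1$ by producing three collinear points on $A_1$. The only variation is at the delicate coincidence $Q_7=Q_8$, where you use B\'ezout to get $A_1\cap A_2=\{X,Q_1,Q_5,Q_6\}$, while the paper applies Lemma \ref{uniqueC} to $X,Q_1,Q_5,Q_6$ and the common fifth point to force $A_1=A_2$; these two steps are interchangeable.
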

\begin{proof}Note that $Q_3$ and $Q_7$ are different from $X$ by definition, and they are different from $Q_1,Q_5,Q_6$ since $Q_1,Q_5,Q_6$ are not contained in $M_1$, nor in $M_2$, by definition of $U$. If $Q_3$ were equal to $Q_7$, then $M_1$ and $M_2$ would both contain $X$ and $Q_3$, hence they would be equal, contradicting the fact that $A$ is an element of $U$. So $X, Q_3, Q_5, Q_7$ are all distinct, and since they are all contained in $A_1$, they are in general position because $A_1$ is irreducible. We will show that $Q_8$ is different from any of these four points. By definition, $Q_8$ is different from $X$. If $Q_8$ were equal to $Q_3$, then $A_1$ and $A_2$ would both contain $X,Q_1,Q_5,Q_6$ and $Q_3$. But since $Q_3$ is different from $X,Q_1,Q_5,Q_6$, there is a unique conic through these five points by Lemma \ref{uniqueC}. So this would imply $A_1=A_2$, contradicting the fact that $A$ is in $U$. Hence $Q_8$ is different from $Q_3$, and similarly, $Q_8$ is different from $Q_7$. Finally, $Q_8$ is different from $Q_5$, since the line $M_3$ does not contain $Q_5$. We conclude that by Lemma \ref{uniqueC}, there is a unique conic $C$ through the points $X,Q_3,Q_5,Q_7,$ and~$Q_8$.
Note that $X, Q_3, Q_5, Q_7$ are all distinct from $Q_1$. If $C$ contained the line $L$ through $X$ and $Q_1$, then $C$ would be the union of two lines (one of them being $L$). This means that either $L$ would contain one of the points $Q_3,Q_5,Q_7$, or the points $Q_3,Q_5,Q_7$ are all on the second line. But since $X,Q_1,Q_3,Q_5,Q_7$ are all in $A_1$, which is irreducible, both of these cases would be a contradiction. We conclude that $C$ does not contain~$L$.
\end{proof}

We now define a fifth point $Q_2$ to be the point of intersection of the conic through $X, Q_3, Q_5, Q_7,Q_8$ with the line through $X$ and $Q_1,$ that is not $X$. Note that $Q_2$ is well defined by Lemma \ref{uniqueconic}. From a point $A$ in $U$ we have now defined an element $(Q_2,Q_3,Q_4,Q_7,Q_8)$ of $(\mathbb{P}^2)^5$, and it is easy to see that for this point conditions 1-5 are satisfied, hence it is an element of $Y$. This leads us to define the following map. 
\begin{align*}
\psi\colon U&\longrightarrow Y,\\
(A_1,A_2,M_1,M_2,M_3)&\longmapsto(Q_2,Q_3,Q_4,Q_7,Q_8).
\end{align*}

Let $T$ be the set $\psi^{-1}(S)$. 

\begin{proposition}\label{inversephi} The map $\psi|_{U\setminus T}\colon U\setminus T\longrightarrow Y\setminus S$ is a bijection, with inverse given by $\varphi$.
\end{proposition}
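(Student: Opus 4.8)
The plan is to show that $\varphi$ and $\psi|_{U\setminus T}$ are mutually inverse by verifying two things: first, that the construction of $\psi$ genuinely inverts the construction of $\varphi$ on the point level, and second, that $\psi$ maps $U\setminus T$ into $Y\setminus S$ (and symmetrically $\varphi$ maps $Y\setminus S$ into $U\setminus T$). The set $T=\psi^{-1}(S)$ is defined precisely so that the target-side exclusion matches up, so once I establish the pointwise inverse relations, the statement about bijectivity on the complements should follow formally.

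First I would check that $\psi$ lands in $Y$: this is essentially asserted already, since the five points $Q_2,\ldots,Q_8$ built from $A=(A_1,A_2,M_1,M_2,M_3)$ satisfy conditions 1--5 by construction ($X,Q_1,Q_2$ lie on $L$ by definition of $Q_2$; $X,Q_3,Q_4$ lie on $M_1$; $A_1$ witnesses condition 3 after checking it equals the conic through $X,Q_1,Q_3,Q_5,Q_6,Q_7$, and similarly $A_2$ for condition 4, and the conic $C$ of Lemma~\ref{uniqueconic} for condition 5). I would spell out that $A_1\in F_1$ already passes through $X,Q_1,Q_5,Q_6$, and $Q_3,Q_7$ were defined as its intersections with $M_1,M_2$, so $A_1$ is the conic of condition 3; this identifies the constructed conic with $A_1$ exactly. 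Restricting to $U\setminus T$ then lands us in $Y\setminus S$ by the definition of $T$.

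Next I would verify the two composition identities. For $\psi\circ\varphi=\mathrm{id}$ on $Y\setminus S$: starting from $(Q_2,Q_3,Q_4,Q_7,Q_8)\in Y\setminus S$, the image $\varphi(Q)=(A_1,A_2,M_1,M_2,M_3)$ recovers the same conics and lines, and I must check that feeding these back through $\psi$ returns the original five points. The key point is that each point is characterized as ``the intersection of a specific conic or line with another, distinct from $X$'': e.g. $Q_3$ is recovered as the second intersection of $A_1$ with $M_1$, and because $Q$ lies in $Y\setminus S$ the relevant intersections are transverse and the ``other'' intersection point is unambiguous (using Lemma~\ref{QP} and the non-tangency/distinctness conditions encoded in $U$, which $\varphi(Q)$ satisfies by Lemma~\ref{imagephi}). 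The uniqueness assertions in Lemma~\ref{uniqueconic} and Lemma~\ref{uniqueC} guarantee that the conic $C$ used to recover $Q_2$ is forced, so $Q_2$ comes back correctly. For $\varphi\circ\psi=\mathrm{id}$ on $U\setminus T$: starting from $A\in U\setminus T$, I set $(Q_i)=\psi(A)$ and must check $\varphi(\psi(A))=A$, i.e. that the conics/lines reconstructed from these points are exactly $A_1,A_2,M_1,M_2,M_3$. This again follows because $A_1$ passes through the five points $X,Q_1,Q_3,Q_5,Q_6$ with $Q_3,Q_7\in A_1$ by construction, and uniqueness (Lemma~\ref{uniqueC}) pins it down; likewise $M_1$ is the line $XQ_3$, which is how it was used to define $Q_3$.

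The main obstacle I anticipate is bookkeeping the transversality and distinctness conditions so that every ``the intersection point that is not $X$'' is well defined and the recovered object is genuinely unique rather than merely existent. Concretely, I must ensure on the $U$ side that non-tangency of $M_1,M_2$ to $A_1$ and of $M_1,M_3$ to $A_2$ (part of the definition of $U$) gives exactly two intersection points each, so the non-$X$ point is determined; and on the $Y\setminus S$ side that being outside $S$ prevents any coincidence that would make an intersection multiplicity jump or a defining conic degenerate. Since $Y\setminus S$ is carved out by removing configurations with three collinear points, and $U$ is defined with precisely the matching irreducibility, distinctness, and non-tangency conditions, I expect these to align cleanly; the routine verification is checking each of the finitely many ways a degeneracy could occur against the listed conditions of $U$ and the hypotheses of Lemmas~\ref{QP}, \ref{uniqueC}, and \ref{uniqueconic}. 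Once both compositions are shown to be the identity, the restriction of $\psi$ to $U\setminus T$ is a bijection onto $Y\setminus S$ with inverse $\varphi$, completing the proof.
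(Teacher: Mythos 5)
Your proposal is correct and takes essentially the same approach as the paper: the paper's proof also verifies the two composition identities (phrased there as injectivity via $\varphi(\psi(A))=A$ on $U\setminus T$ and surjectivity via $\psi(\varphi(Q))=Q$ on $Y\setminus S$), invoking Lemmas \ref{QP}, \ref{uniqueC}, \ref{imagephi}, and \ref{uniqueconic} exactly where you do. In particular, the membership $\varphi(Q)\in U\setminus T$ is obtained, as in your outline, from Lemma \ref{imagephi} together with the identity $\psi(\varphi(Q))=Q$ and the definition $T=\psi^{-1}(S)$.
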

\begin{proof}
Take $A=(A_1,A_2,M_1,M_2,M_3)\in U\setminus T$. Write $\psi(A)=(Q_2,Q_3,Q_4,Q_7,Q_8)$ and $\varphi(\psi(A))=(B_1,B_2,N_1,N_2,N_3)$. Since $\psi(A)$ is not in $S$ by definition of $T$, no three of the points $Q_1,\ldots,Q_8$ are collinear. Therefore, $B_1$ and $B_2$ are the unique and irreducible conics through $Q_1,Q_3,Q_5,Q_6,Q_7$ and through $Q_1,Q_4,Q_5,Q_6,Q_8$, respectively, by Lemma \ref{uniqueC}. Since $A_1$ and $A_2$ both contain $Q_1,Q_5,Q_6$, and $A_1$ contains $Q_3$ and $Q_7$ and $A_2$ contains $Q_4$ and $Q_8$ by definition of $\psi(A)$, we conclude that $B_1=A_1$ and $B_2=A_2$. The line $N_1$ is defined as the line containing $X$ and $Q_3$, which are both contained in $M_1$ as well by definition. We conclude that $N_1=M_1$, and similarly $N_2=M_2$, and $N_3=M_3$. We conclude that $\varphi(\psi(A))=A$. This proves injectivity of $\psi|_{U\setminus T}$.\\
To prove surjectivity of $\psi|_{U\setminus T}$, take a point $Q=(Q_2,Q_3,Q_4,Q_7,Q_8)$ in $Y\setminus S$. Write $\varphi(Q)=(A_1,A_2,M_1,M_2,M_3)$ and $\psi(A_1,A_2,M_1,M_2,M_3)=(R_2,R_3,R_4,R_7,R_8)$.
The point $R_3$ is defined by taking the second point of intersection of $A_1$ with the line $M_1$ through $X$ and $Q_3$. Since $A_1$ is irreducible ($\varphi(Q)$ is in $U$ by Lemma \ref{imagephi}), it does not contain $M_1$, so $R_3=Q_3$. Similarly, we have $R_7=Q_7$, $R_4=Q_4$, and $R_8=Q_8$. Therefore there is a unique conic~$C$ through $X,Q_3,Q_5,Q_7,Q_8$ by Lemma~\ref{uniqueconic}. Since there is a conic through $X,Q_3,Q_5,Q_7,Q_8$ and $Q_2$ by condition 5, we conclude that $C$ contains $Q_2$ by uniqueness. Since the line $L$ through $X$ and $Q_1$ is not contained in $C$ by Lemma~\ref{uniqueconic}, and since $L$ contains $Q_2$ by condition~1, it follows that $Q_2$ is the second point of intersection of $L$ and $C$. Hence $R_2=Q_2$. We conclude that $\psi(\varphi(Q))=Q$, and hence $\varphi(Q)$ is contained in $U\setminus T$, and $\psi|_{U\setminus T}$ is surjective. \\
Since $\psi_{U\setminus T}\colon U\setminus T\longrightarrow Y\setminus S$ is a bijection and we showed that for all elements $A\in U\setminus T$ we have $\varphi(\psi(A))=A$, we conclude that $\varphi$ is the inverse function.  
\end{proof}

We now prove Proposition \ref{key2}. The computations were verified in \texttt{magma}; see \cite{Magmab} for the code. 

\begin{proofofkey2}Recall the curves $L_1,L_2,C_1,\ldots,C_4,D_1,\ldots,D_4$ that are defined above Proposition \ref{key2}. We assume that these ten curves contain a common point $P$, and will show that this contradicts the fact that $R_1,\ldots,R_8$ are in general position. First note that if $P$ were equal to one of the eight points $R_1,\ldots,R_8$, then one of the conics would contain six of the eight points, which would contradict the fact that $R_1,\ldots,R_8$ are in general position. Moreover, if $P$ and any two of the three points $R_1,R_5,R_6$ lie on a line $L$, then the conic $C_1$ would intersect~$L$ in $P$ and the two points. But this implies that $C_1$ is not irreducible, and since $C_1$ contains five of the points $R_1,\ldots,R_8$, this implies that at least three of them are collinear, contradicting the fact that $R_1,\ldots,R_8$ are in general position. We conclude that $R_1,R_5,R_6$ and $P$ are in general position. \\
Let $(x:y:z)$ be the coordinates in $\mathbb{P}^2$. Without loss of generality, after applying an automorphism of $\mathbb{P}^2$ if necessary, we can choose $R_1,R_5,R_6$, and $P$ to be any four points in general position in $\mathbb{P}^2$. We now distinguish between char $k\neq2$ and char~$k=2$.

\vspace{5pt}

\underline{Assume char $k\neq 2$.} Set
\begin{align*}
&R_1=(1:0:1); &	R_6&=(0:-1:1);\\
&R_5=(0:1:1); &	P&=(-1:0:1).
\end{align*} 
It follows that the line $L_1$, which contains $R_1$ and $P$, is given by $y=0$. The linear system of quadrics through $R_1,\;R_5,\;R_6$ and $P$ is generated by two linearly independent quadrics, and we take these to be $x^2+y^2-z^2$ and $xy$. Let $l,m \in k$ be such that\begin{align*}
&C_1\mbox{ is given by }x^2+y^2-z^2=2lxy;\\
&C_2\mbox{ is given by }x^2+y^2-z^2=2mxy.
\end{align*}
Since $R_3,R_4,R_7,$ and $R_8$ are not contained in $L_1$, there are $s,t,u\in k$ such that
\begin{align*}
&\mbox{the line }L_2\mbox{ is given by }sy=x+z;\\
&\mbox{the line }L_3\mbox{ through }P\mbox{ and }R_7\mbox{ is given by }ty=x+z;\\
&\mbox{the line $L_4$ through }P\mbox{ and }R_8\mbox{ is given by }uy=x+z.
\end{align*}
We want to show that all possible configurations of the five points $R_2,R_3,R_4,R_7,R_8$ in~$\mathbb{P}^2$ such that all ten curves contain $P$, are such that $R_1,\ldots,R_8$ are not in general position. By Proposition \ref{inversephi}, all configurations of $R_2,R_3,R_4,R_7,R_8$ such that $L_1,L_2,C_1,C_2,C_3$ contain the point $P$ and no three of the points $R_1,\ldots,R_8$ are collinear are given in terms of the conics $C_1$ and $C_2$ and the lines $L_2,L_3,L_4$. By computing the appropriate intersections we find
\begin{align*}
&R_3=\left(-s^2+1:2l-2s:2ls-s^2-1\right);\\
&R_4=\left(-s^2+1:2m-2s:2ms-s^2-1\right);\\
&R_7=\left(-t^2+1:2l-2t:2lt-t^2-1\right);\\
&R_8=\left(-u^2+1:2m-2u:2mu-u^2-1\right).
\end{align*}
By Lemma \ref{uniqueconic}, there is a unique conic containing $R_3,R_5,R_7,R_8$, and $P$, and we compute a defining polynomial and find 
\begin{multline}\left(2l^2u+2l^2-2lmu-2lm-lsu-ls-ltu-lt+lu^2+2lu+l+mst+ms+mt\right.\nonumber\\
\left.-2mu-m+st-su-tu+u^2
\vphantom{\tfrac12}\right)x^2\nonumber\\
\left(2l^2u^2+2l^2u+2lmst-2lmsu-2lmtu-2lmu-lstu+lst-lsu+ls-ltu+lt\right.\nonumber\\
\left.+2lu^2+lu+l+mstu+mst-msu-ms-mtu-mt-mu-m\vphantom{\tfrac12}\right)xy\nonumber \\ 
+2(u+1)(l+1)(l-m)xz\nonumber\\
+\left(\vphantom{\tfrac12}lstu+lst+lu^2+lu-mstu-msu-mtu-mu+st-su-tu+u^2\vphantom{\tfrac12}\right)y^2\nonumber\\
+(u+1)(t+1)(s+1)(l-m)yz\nonumber\\
+\left(\vphantom{\tfrac12}lsu+ls+ltu+lt-lu^2+l-mst-ms-mt-m-st+su+tu-u^2\vphantom{\tfrac12}\right)z^2.
\end{multline} 

Intersecting this conic with the line $L_1$ gives besides $P$ the point $R_2$, and we find 
\begin{multline}R_2=(-(lsu+ls+ltu+lt-lu^2+l-mst-ms-mt-m-st+su+tu-u^2)\nonumber\\
:0:\nonumber\\
(2l^2-2lm-ls-lt)(u+1)+lu^2+2lu+l+mst+ms+mt-2mu-m+st-su-tu+u^2).\end{multline}

We define $\mathbb{A}^{5}$ to be the affine space with coordinate ring $T_{5}=k[l,m,s,t,u].$ Following all the above, points in $\mathbb{A}^{5}$ correspond to configurations of the points $R_1,\ldots,R_8$. The fact that the ten curves contain $P$ gives polynomial equations in these five variables, and hence defines an algebraic set $A_0$ in $\mathbb{A}^{5}$. We define $S_0$ to be the algebraic set of all points in $\mathbb{A}^{5}$ that correspond to the configurations where the points $R_1,\ldots,R_8$ are not in general position. We want to show that $A_0$ is contained in $S_0$, which would prove the proposition. In what follows we will show that indeed every component of~$A_0$ is contained in $S_0$.\\
Note that by construction of $R_1,\ldots,R_8$, the curves $L_1,L_2,C_1,C_2,C_3$ contain~$P$. We will add conditions for $C_4,D_1,\ldots,D_4$ to contain $P$, too. We start with $C_4$. The equation expressing that $P$ is contained in $C_4$, is given by det($N)=0$, where $N$ is the matrix in Lemma \ref{genpos1} corresponding to $(R_2,R_4,R_6,R_7,R_8,P)$. This determinant is given by 
$$\mbox{det}(N)=16(u+1)(t+1)(s+1)(s-u)(m-u)(m-s)(l-t)(l-m)f_1f_2,$$
where 
\begin{multline}f_1=l^2u+l^2-lmu-lm-lsu-ls-ltu-lt+lu^2+lu+mst+ms\nonumber\\
+mt-mu+st-su-tu+u^2
,\end{multline}
and 
$$f_2=at^2+btu+cu^2+dt+eu+f,$$
with \begin{align*}
&a=(s+1)(m-1)(m+1),\;\;&\;b=d=-e=2s(m-1)(l+1),\\
&c=(s-1)(l-1)(l+1),\;\;&\;f=(l-m)(ls-l-ms-m+2s).\end{align*}
Let $F_2\subset\mathbb{A}^5$ be the affine variety given by $f_2=0$. Every component of $A_0$ is contained in one of the components of the algebraic set given by det$(N)=0$. With \texttt{magma} it is an easy check that apart from $f_2$, all non-constant factors of det$(N)$ define configurations of $R_1,\ldots,R_8$ where three of the points are collinear (see \cite{Magmab}; $f_1=0$ corresponds to $R_2,R_3,R_4$ being collinear), and hence they define components of~$S_0$. Therefore, it suffices to prove that $A_0\cap F_2$ is contained in $S_0$. \\
Since $f_2$ is quadratic in $t$ and $u$, the projection $\pi$ from $F_2$ to the affine space $\mathbb{A}^3$ with coordinates $l,m,s$ has fibers that are (possibly non-integral) affine conics. Let~$\Delta$ be the discriminant of the quadratic form that is the homogenisation of $f_2$ with respect to $t$ and $u$, which is given by $$\Delta=4acf-ae^2-b^2f+bde-cd^2;$$ the singular fibers of $\pi$ lie exactly above the points $(l,m,s)\in\mathbb{A}^3$ for which $\Delta=0$. We compute the factorization of $\Delta$ in $\mathbb{Z}[l,m,s]$, and find $$\Delta=4(s-1)(s+1)(m-1)(m+1)(l-1)(l+1)(l-m)g,$$
with $g=ls-l-ms-m+2s$. All non-constant factors of $\Delta$ except for $g$, when viewed as elements of $T_5$, define components of $S_0$ in $\mathbb{A}^5$. Therefore, the fibers under~$\pi$ above the zero sets of these factors in $\mathbb{A}^3$ are contained in $S_0$. We will show that the same holds for the inverse image under $\pi$ of the zero set $Z(g)\subset\mathbb{A}^3$ of $g$, which is given by the zero set $Z(f_2,g)$ in $\mathbb{A}^5$. Note that we can write $$f_2=(s-1)(l+1)(u-t)a_1+(t-1)ga_2,$$
with $a_1=(l-1)(u+1) - (m+1)(t-1)$ and $a_2=(l+1)(u+1) - (m+1)(t+1)$. Therefore, the set $Z(f_2,g)$ is given by $g=(s-1)(l+1)(u-t)a_1=0$, so $Z(f_2,g)$ is the union of four algebraic sets: $$Z(f_2,g)=Z(g,s-1)\cup Z(g,l+1)\cup Z(g,u-t)\cup Z(g,a_1)\subset \mathbb{A}^5.$$ Note that $s-1,l+1,$ and $u-t$ define components of $S_0$, so the first three terms in this union are contained in $S_0$. With \texttt{magma}, we check that the irreducible polynomial $\gamma=(m-u)(l-1)g + (l-s)(m-1)a_1$ corresponds to a configuration where the six points $R_3,\ldots,R_8$ are contained in a conic, and hence it defines a component of $S_0$. Since $\gamma$ is contained in the ideal in $\mathbb{Z}[l,m,s,t,u]$ generated by $g$ and $a_1$, it follows that $Z(g,a_1)$ is also contained in $S_0$. We conclude that all the singular fibers of~$\pi$ lie in $S_0$. \\
The generic fiber $F_{2,\eta}$ of $\pi$ is a conic in the affine plane $\mathbb{A}^2$ with coordinates $t$ and~$u$ over the function field $k(l,m,s)$, where $l,m,s$ are transcendentals. This fiber contains the point $(t,u)=(l,m)$. We can parametrize $F_{2,\eta}$ with a parameter $v$ by intersecting it with the line $M$ given by $v(t-l)=(u-m)$, which intersects $F_{2,\eta}$ in the point $(l,m)$ and a second intersection point that we associate to $v$. Consider the open subset $F_2'\subset F_2$ given by the complement in $F_2$ of the singular fibers of $\pi$ and the hyperplane section defined by $t-l=0$, so $F_2\setminus F_2'\subset S_0$. 
In what follows, we use the idea of this parametrization to construct an isomorphism between $F_2'$ and an open subset of the affine space $\mathbb{A}^4$ with coordinates $l,m,s,v$. \\
Consider the ring $T_5^v=k[l,m,s,t,v]$, and the map $\varphi\colon T_5\longrightarrow T_5^v, $ that sends $u$ to $v(t-l)+m$ and $l,m,s,t$ to themselves. We have $\varphi(f_2)=(t-l)(\alpha t+\beta)$, where $$\alpha=l^2sv^2-l^2v^2-2lmsv+2lsv+m^2s+m^2-2msv-sv^2+2sv-s+v^2 -1,$$
and 
\begin{multline}\beta=l^3sv^2-l^3v^2-2l^2msv+2l^2mv+lm^2s-lm^2-2lmsv-lsv^2+2lsv\nonumber\\
-ls+lv^2+l+2m^2s-2mv+2sv-2s.\end{multline}
The map $\varphi$ induces a birational morphism $\psi\colon\mathbb{A}^5_v\longrightarrow\mathbb{A}^5,$ where $\mathbb{A}^5_v$ is the affine space with coordinate ring $T_5^v$. Moreover, $\psi$ is an isomorphism on the complements of the zero sets of $t-l$ in its domain and codomain. Set $$G=Z(\alpha t+\beta)\setminus Z(t-l)\subset\mathbb{A}^5_v,$$ then $\psi$ induces an isomorphism $G\cong F_2\setminus Z(t-l)$. In particular, $\psi$ induces an isomorphism from $G\setminus Z(\Delta)$ to $F_2'$. We want to show that $G\setminus Z(\alpha\Delta)$ equals $G\setminus Z(\Delta)$; to do this it suffices to show that $\psi(G \cap Z(\alpha))$ is contained in a union of singular fibers of~$\pi$. Note that we have $G\cap Z(\alpha)=G\cap Z(\alpha,\beta)$. Let $(l_0,m_0,s_0,t_0,v_0)$ be a point in $G\cap Z(\alpha,\beta)$, then, since $\alpha$ and $\beta$ do not depend on $t$, the point $(l_0,m_0,s_0,t,v_0)$ is contained in $Z(\alpha t+\beta)$ for all $t$. It follows that the fiber on $F_2$ in $\mathbb{A}^2(t,u)$ under~$\pi$ above the point $(l_0,m_0,s_0)\in\mathbb{A}^3$ contains the line $u=v_0(t-l_0)+m_0$, hence is singular. Moreover, this fiber contains the point $\psi((l_0,m_0,s_0,t_0,v_0))$. We conclude that $\psi(G\cap Z(\alpha))$ is contained in a union of singular fibers of $F_2$. It follows that $$\psi(G\setminus Z(\alpha\Delta))=\psi(G\setminus Z(\Delta))=F_2'.$$
Consider the ring $T_{4}=k[l,m,s,v]$, and let $K_4$ be its field of fractions. Consider the ring homomorphism $\rho\colon T_5^v\longrightarrow K_4$ that sends $t$ to~$\tfrac{-\beta}{\alpha}$, and $l,m,s,v$ to themselves. This induces a birational map $i\colon \mathbb{A}^4\longrightarrow Z(\alpha t+\beta)\subset\mathbb{A}^5_v$, where $\mathbb{A}^4$ is the affine space with coordinate ring $T_4$. The map $i$ induces an isomorphism from $\mathbb{A}^4\setminus Z(\alpha)$ to $Z(\alpha t +\beta)\setminus Z(\alpha)$; this isomorphism sends the zero set of $\Delta$ in $\mathbb{A}^4\setminus Z(\alpha)$ to the zero set of $\Delta$ in $Z(\alpha t +\beta)\setminus Z(\alpha)$, and the zero set of $t-l$ in $Z(\alpha t +\beta)\setminus Z(\alpha)$ corresponds to the zero set of $\alpha l+\beta$ in $\mathbb{A}^4\setminus Z(\alpha)$. Hence, we have an isomorphism $$\mathbb{A}^4\setminus Z(\alpha\Delta(\alpha l+\beta))\cong G\setminus Z(\alpha\Delta).$$ We conclude that we have an isomorphism $$\psi\circ i\colon\mathbb{A}^4\setminus Z(\alpha\Delta(\alpha l+\beta))\longrightarrow F_2'.$$
Recall that our aim is to show that $A_0\cap F_2$ is contained in $S_0$. Since we showed that all components of $F_2\setminus F_2'$ are contained in $S_0$, we have $A_0\cap F_2\subset S_0$ if and only if $A_0\cap F_2'\subset S_0$. Moreover, after setting $$A_1=i^{-1}(\psi^{-1}(A_0\cap F_2'))\mbox{ and }S_1=i^{-1}(\psi^{-1}(S_0\cap F_2')),$$ showing $A_0\subseteq S_0$ is equivalent to showing $A_1\subseteq S_1$.\\
For $i$ in $\{1,2,3,4\}$, the expression stating that $P$ is contained in $D_i$ is given by det$(H_i)=0$, where $H_i$ is the matrix denoted by $H$ in Lemma \ref{genpos1} associated to
\begin{align*}
&(R_2,R_3,R_4,R_5,R_6,R_1,R_7,R_8)\mbox{ for }i=1;\\
&(R_1,R_3,R_4,R_7,R_8,R_2,R_5,R_6)\mbox{ for }i=2;\\
&(R_1,R_2,R_4,R_5,R_7,R_3,R_6,R_8)\mbox{ for }i=3;\\
&(R_1,R_2,R_3,R_6,R_8,R_4,R_5,R_7)\mbox{ for }i=4,
\end{align*}
where we set $(\alpha_i,\beta_i,\gamma_i)=(x,y,z)$ for $i\in\{1,2\}$, and  
$(\alpha_i,\beta_i,\gamma_i)=(y,x,z)$ for $i$ in $\{3,\ldots,8\}$. For $i\in\{1,2,3,4\}$, let $B_i\subset F_2\subset\mathbb{A}^5$ be the locus of points corresponding to configurations of $R_1,\ldots,R_8$ such that $D_i$ contains $P$. Then $A_0\cap F_2=\bigcap_{i=1}^4 B_i$, so $A_0\cap F_2'=\bigcap_{i=1}^4(B_i\cap F_2')$, and hence $A_1=\bigcap_{i=1}^4i^{-1}(\psi^{-1}(B_i\cap F_2'))$.
Note that~$B_i$ is defined by $f_2=$det$(H_i)=0$. For $i\in\{1,2,3,4\}$, we compute the determinant of~$H_i$ and its factorization in $\mathbb{Z}[l,m,s,t,u]$ in \texttt{magma}. For all $i$, this factorization has a constant factor that is a power of 2, and there is exactly one irreducible factor~$h_i$ that does not define a component of $S_0$; it follows that $Z(f_2,h_i)\setminus S_0=B_i\setminus S_0.$ Note that for $i\in\{1,2,3,4\}$, the set $i^{-1}(\psi^{-1}(Z(f_2,h_i)\setminus Z(\alpha\Delta(t-l)))$ is defined in $\mathbb{A}^4\setminus Z(\alpha\Delta(\alpha l+\beta))$ by the numerator of $\rho(\varphi(h_i))$; we compute the factorization of this numerator in $\mathbb{Z}[l,m,s,v]$. Again, for all $i$, this factorization has as constant factor a power of~2, and contains exactly one irreducible factor that does not define a component of~$S_1$; we call this factor~$g_i$. It follows that for $i\in\{1,2,3,4\}$, the set $i^{-1}(\psi^{-1}(B_i\setminus S_0))$ is contained in $Z(g_i)$, so $A_1\setminus S_1$ is contained in $Z(g_1,g_2,g_3,g_4)$.  
Computing $g_1,g_2,g_3,g_4$ takes \texttt{magma} over an hour, and these polynomials are too big to write down here; you can find them in \cite{Magmaa}. Set 
\begin{multline}
\delta=(ls-l-ms-m+2s)^2(l-m)(l-s)(l+1)(m-1)(s+1)\cdot\nonumber\\
(l-1)(m+1)(s-1)v^2.
\end{multline} 
We check that all factors of $\delta\in\mathbb{Z}[l,m,s,v]$ define components of $S_1$ (the first factor corresponds to both $R_2,R_3,R_5$ and $R_2,R_4,R_6$ being collinear). We will show that~$\delta$ is contained in $I$. We use a Gr\"obner basis for~$I$ to check this. In \texttt{magma}, we define the ideal $I$ in the ring $T_4$ with $k=\mathbb{Q}$ with the ordering $s>v>m>l$. With the function $$\mbox{ \texttt{G,b:=GroebnerBasis(I:ReturnDenominators)}}$$ we compute the reduced Gr\"obner basis $G$ for~$I$; after using this function, \texttt{magma} uses $G$ as a generator set for $I$. We then use $G$ to check that $\delta$ is contained in $I$, again over~$\mathbb{Q}$. This finishes the proof for char $k=0$; We continue the proof for char~$k=p>0$ with $p\neq2,3$. \\
The element $\delta$ can be written as a linear combination of the elements in $G$ with coefficients in $T_4$. Let $C$ be the set of these coefficients (obtained by the function \texttt{Coordinates(I,f)}). In the proces of computing $G$, \texttt{magma} makes divisions by integers, which are stored in the set $b$. Let $\mathcal{P}$ be the set containing the prime divisors of all elements in $b$, and all prime divisors of the denominators of the coefficients of the elements in $G$, and all prime divisors of the denominators of the coefficients of the elements in $C$. Then for a prime $p\not\in\mathcal{P}$, the reductions modulo $p$ of the elements in $G$ are well defined. Moreover, since $\mathcal{P}$ contains all prime divisors of the elements in $b$, the reductions modulo $p$ of the elements in $G$ still form a Gr\"obner basis for the ideal $J$ generated by the reductions modulo $p$ of $g_1,g_2,g_3,g_4$. Finally, the reduction modulo $p$ of $\delta$ is contained in $J$, since the prime divisors of the denominators of the coefficients of the elements in $C$ are in $\mathcal{P}$. This finishes the proof for char $k=p>0$ with $p\neq 2,3$, $p\notin\mathcal{P}$.\\
For all finitely many $p\in\mathcal{P}\setminus\{2,3\}$, let $\overline{T_4}$ be the ring $\mathbb{F}_p[l,m,s,v]$, let $\overline{\delta}$ be the reduction of $\delta$ modulo~$p$, and for $i\in\{1,2,3,4\}$, let $\overline{g_i}$ be the reduction of $g_i$ modulo~$p$; then it is a quick check in $\texttt{magma}$ that $\overline{\delta}$ is contained in the ideal $(\overline{g_1},\overline{g_2},\overline{g_3},\overline{g_4})$ of~$\overline{T_4}$. 
We conclude that for char $k\neq2,3$, the set $A_1\setminus S_1$ is contained in the union of the varieties defined by the factors of $\delta$, so $A_1\setminus S_1$ is a subset of $S_1$. We conclude that~$A_1$ is contained in $S_1$. This finishes the proof for char~$k\neq2$. 

\vspace{5pt}

\underline{Assume char $k=2$.}\\
Since the points $R_1,R_5,R_6,P$ as defined in the previous case are not in general position over a field of characteristic 2, we redefine these points here. The proof then goes completely analogous to the previous case; see \cite{Magmab} for the code in \texttt{magma} where we verify everything over the field $k=\mathbb{F}_2$ of two elements. Set
\begin{align*}
&R_1=(1:0:1); &	R_6&=(0:1:1);\\
&R_5=(0:1:0); &	P&=(1:0:0).
\end{align*} These four points are in general position in $\mathbb{P}^2$. For the two generators of the linear system of quadrics through $R_1,\;R_5,\;R_6$ and $P$ we take $z^2+xz+yz$ and $xy$. \\
We now do all the steps as in the previous case, and everything works analogously. In fact, checking that all singular fibers of the analog of $\pi$ from the previous case are contained in the analog of $S_0$ can be done even more directly in \texttt{magma} than as described in the previous case. We obtain again an algebraic set $A_1\subset\mathbb{A}^4$, where $\mathbb{A}^4$ is the affine space over $\mathbb{F}_2$ with coordinates $l,m,s,v$, and $A_1$ is the algebraic set corresponding to the configurations where the ten curves $L_1,L_2,C_1,\ldots,C_4,D_1,\ldots,D_4$ all contain the point $P$. Again, we want to show that $A_1$ is contained in $S_1$, where $S_1\subset\mathbb{A}^4$ is the algebraic set defined by the polynomials that correspond to the eight points $R_1,\ldots,R_8$ not being in general position. Completely analogously to the case char $k\neq2$, from the conditions that~$P$ is contained in $D_1,D_2,D_3,D_4$, we now obtain four polynomials $g_1,g_2,g_3,g_4$ in $\mathbb{F}_2[l,m,s,v]$ (see \cite{Magmaa}).
Again, we have $A_1\setminus S_1\subset Z(g_1,g_2,g_3,g_4)$. Set $$\delta=(ls+ms+m+s)(lv+m+1)(l+m)(l+s)(m+s)(l+1)(m+1)m^3(s+1)lvs
.$$ It is a quick check with \texttt{magma} that $\delta$ is contained in $I$. Moreover, it is again a quick check that all factors of $\delta$ correspond to three points being collinear, and hence define a component of $S_1$. We conclude again that~$A_1$ is contained in $S_1$. \qed
\end{proofofkey2}

We can now prove Theorem \ref{thm2}.

\begin{proofoftheorem2}Recall that every set of exceptional curves without partners corresponds to a clique in $G$ with only edges of weights 1 and~2, so by Proposition \ref{max12bvt}, the number of exceptional curves that are concurrent in a point outside the ramification curve of $\varphi$ is at most twelve. This proves the case char~$k=3$. \\
Consider the eleven classes in $C$ given by 
\begin{align*}
&e_1=L-E_1-E_2;\\
&e_2=L-E_3-E_4;\\
&e_3=2L-E_1-E_3-E_5-E_6-E_7;\\
&e_4=2L-E_1-E_4-E_5-E_6-E_8;\\
&e_5=2L-E_2-E_3-E_5-E_7-E_8;\\
&e_6=2L-E_2-E_4-E_6-E_7-E_8;\\
&e_7=4L-2E_1-E_2-E_3-E_4-E_5-E_6-2E_7-2E_8;\\
&e_8=4L-E_1-2E_2-E_3-E_4-2E_5-2E_6-E_7-E_8;\\
&e_9=4L-E_1-E_2-2E_3-E_4-E_5-2E_6-E_7-2E_8;\\
&e_{10}=4L-E_1-E_2-E_3-2E_4-2E_5-E_6-2E_7-E_8;\\
&e_{11}=5L-2E_1-2E_2-2E_3-2E_4-2E_5-E_6-E_7-2E_8;
\end{align*}
It is straightforward to check that they form a clique with only edges of weights 1 and 2 in $G$. By Remark~\ref{configuratie}, we know that $e_1,\ldots,e_{10}$ correspond to the classes in Pic $X$ of the strict transforms of the curves $L_1,L_2,C_1,\ldots,C_4,D_1,\ldots,D_4$, defined as above Proposition \ref{key2} with respect to $P_i$ instead of $R_i$ for $i\in\{1,\ldots,8\}$.  \\
Let $K=\{c_1,\ldots,c_{11}\}$ be a clique of size eleven in $G$ with only edges of weights 1 and~2. By Proposition \ref{trans11}, after changing the indices if necessary, there is an element $g\in G$ such that $c_i=g(e_i)$ for $i\in\{1,\ldots,11\}$. Set $E_i'=g(E_i)$. Then, since the $E_i'$ are pairwise disjoint, by Lemma \ref{BD} we can blow down $E_1',\ldots,E_8'$ to points $Q_1,\ldots,Q_8$ in $\mathbb{P}^2$ that are in general position, such that $X$ is isomorphic to the blow-up of $\mathbb{P}^2$ at $Q_1,\ldots,Q_8$, and $E_i'$ is the class in Pic $X$ that corresponds to the exceptional curve above $Q_i$ for all~$i$. By the bijection in Remark \ref{configuratie}, the elements $c_1,\ldots,c_{10}$ are the classes that correspond to the strict transforms of $L_1,L_2,C_1,\ldots,C_4,D_1,\ldots,D_4$ defined as above Proposition \ref{key2} with respect to $Q_i$ instead of $R_i$ for $i\in\{1,\ldots,8\}$. If char $k\neq3$, it follows from Proposition \ref{key2} that the curves corresponding to $c_1,\ldots,c_{10}$ are not concurrent. We conclude that the number of concurrent exceptional curves in a point outside the ramification curve of $\varphi$ is less than eleven for char $k\neq3$.
\qed
\end{proofoftheorem2}

\section{Examples}\label{examples}

\subsection{On the ramification curve}

This section contains examples that show that the upper bounds in Theorem \ref{thm1} are sharp. Example \ref{char2} is a del Pezzo surface over a field of characteristic 2 with 16 concurrent exceptional curves, Example \ref{ex1} is a del Pezzo surface over any field of characteristic unequal to  $2,3,5,7,11,13,17,19$ with 10 concurrent exceptional curves, and Example \ref{char5711311719} contains examples of ten concurrent exceptional curves on del Pezzo surfaces in the remaining 7 characteristics.  

\begin{example}\label{char2}
Set $f=x^5+x^2+1\in \mathbb{F}_2[x]$, and let $F\cong\mathbb{F}_2[x]/(f)$ be the finite field of 32 elements defined by adjoining a root $\alpha$ of $f$ to $\mathbb{F}_2$. Define the following eight points in $\mathbb{P}^2_{F}$.
\begin{align*}
&Q_1=(0:1:1); & &Q_5=(1:1:1);\\
&Q_2=(0:1:\alpha^{19}); & &Q_6=(\alpha^{20}:\alpha^{20}:\alpha^{16});\\
&Q_3=(1:0:1); & &Q_7=(\alpha^{24}:\alpha^{25}:1);\\
&Q_4=(1:0:\alpha^5); & &Q_8=(\alpha^{30}:1:\alpha^5).
\end{align*}

With $\texttt{magma}$ we check that the determinants of the appropriate matrices in Lemma~\ref{genpos1} are all nonzero, so these eight points are in general position. Therefore, the blow-up of $\mathbb{P}^2$ in $\{Q_1,\ldots,Q_8\}$ is a del Pezzo surface $S$. 
We have the following four lines in~$\mathbb{P}^2$.
\begin{align*}
&\mbox{The line }L_1 \mbox{ through }Q_1\mbox{ and }Q_2,\mbox{ which is given by }x=0;\\
&\mbox{the line }L_2 \mbox{ through }Q_3\mbox{ and }Q_4,\mbox{ which is given by }y=0;\\
&\mbox{the line }L_3 \mbox{ through }Q_5\mbox{ and }Q_6,\mbox{ which is given by }x=y;\\
&\mbox{the line }L_4 \mbox{ through }Q_7\mbox{ and }Q_8,\mbox{ which is given by }y=\alpha x.
\end{align*}
Let $C_{i,j}$ be the unique cubic through $Q_1,\ldots,Q_{i-1},Q_{i+1},\ldots,Q_8$ that is singular in~$Q_j$. Set $(R_1,\ldots,R_8)=(Q_1,Q_3,Q_4,Q_5,Q_6,Q_7,Q_8,Q_2)$, and let $L$ be the corresponding matrix from Lemma \ref{genpos1}. Then the equation defining $C_{1,2}$ is the determinant of $L'$, where $L'$ is equal to $L$ after replacing the first row by Mon$_3$. Similarly, we compute the defining equations of $C_{3,4},\;C_{5,6},\;C_{7,8}$ and $C_{8,7}$, and find the following. 
\begin{multline*}C_{1,2}\colon 
x^3+\alpha^{24}x^2y+\alpha^{28}x^2z+\alpha^{30}xy^2+\alpha^9xyz+\alpha^{26}xz^2+\alpha^{13}y^3+\alpha^6yz^2=0
\end{multline*}
\begin{multline*}
C_{3,4}\colon
x^3+\alpha^{12}x^2y+\alpha^4xy^2+\alpha^{11}xyz+\alpha^{21}xz^2+y^3+\alpha^{23}y^2z+\alpha^{12}yz^2=0
\end{multline*}
\begin{multline*}
C_{5,6}\colon
x^3+ \alpha^4 x^2y+\alpha^{28} x^2z+\alpha^{25}xy^2+\alpha^{20}xyz+\alpha^{26}xz^2+\alpha^{17}y^3+\alpha^9y^2z+\alpha^{29}yz^2=0
\end{multline*}
\begin{multline*}
C_{7,8}\colon
x^3+ \alpha x^2y+\alpha^{28} x^2z+\alpha^{17}xy^2+\alpha^{10}xyz+\alpha^{26}xz^2+\alpha^{16}y^3+\alpha^8y^2z+\alpha^{28}yz^2=0
\end{multline*}
\begin{multline*}
C_{8,7}\colon 
x^3+ \alpha^{26} x^2y+\alpha^{28} x^2z+\alpha^{19}xy^2+\alpha^{10}xyz+\alpha^{26}xz^2+\alpha^{16}y^3+\alpha^8y^2z+\alpha^{28}yz^2=0
\end{multline*}

Let $e_1,\ldots,e_8$ be the strict transforms of the eight curves $$L_1,\ldots,L_4,C_{1,2},C_{3,4},C_{5,6},C_{7,8},$$ and let $c_8$ be the strict transform of $C_{8,7}$. Since these nine curves all contain the point $(0:0:1)$, the exceptional curves $e_1,\ldots,e_8,c_8$ are concurrent in a point $P$ on~$S$. Let~$\psi$ be the morphism associated to the linear system $|-2K_S|$.   Since $e_8\cdot c_8=3$, the point $P$ lies on the ramification curve of $\psi$ by Remark \ref{rem}. Therefore, by the same remark, for $i\in\{1,\ldots,7\}$, the partners of $e_1,\ldots,e_7$ contain $P$, too. We conclude that there are sixteen exceptional curves on~$S$ that are concurrent in $P$. 
\end{example}

\begin{example}\label{ex1}
Let $k$ be a field of characteristic unequal to $2,3,5,7,11,13,17,19$. Define the following eight points in $\mathbb{P}^2_{k}$.
\begin{align*}
&Q_1=(0:1:1); & &Q_5=(1:1:1);\\
&Q_2=(0:5:3); & &Q_6=(4:4:5);\\
&Q_3=(1:0:1); & &Q_7=(-2:2:1);\\
&Q_4=(-1:0:1); & &Q_8=(2:-2:1).
\end{align*}
With \texttt{magma} we compute the determinants of the matrices in Lemma \ref{genpos1} that determine whether three of the points are on a line, or six of the points are on a conic, or seven of them are on a cubic that is singular at one of them. These determinants are nonzero for char~$k\neq2,3,5,7,11,13,17,19$, so the points are in general position.  
Therefore, the blow-up of $\mathbb{P}^2_k$ in $\{Q_1,\ldots,Q_8\}$ is a del Pezzo surface~$S$. 
We define the lines $L_1,L_2,L_3$ as in Example \ref{char2}. We define $L_4$ to be the line containing $Q_7$ and $Q_8$, which is given by $x=-y$.\\
Let $C_{7,8}$ be the unique cubic through $Q_1,\ldots,Q_6,Q_8$ that is singular in $Q_8$, and $C_{8,7}$ the unique cubic through $Q_1,\ldots,Q_7$ that is singular in $Q_7$. 
As in Example \ref{char2} we compute the defining equations for $C_{7,8}$ and $C_{8,7}$, and we find
\begin{align*}&C_{7,8}\colon x^3-\tfrac{3}{4}x^2y-\tfrac{31}{12}xy^2+\tfrac{10}{3}xyz-xz^2-y^3+\tfrac{8}{3}y^2z-\tfrac{5}{3}yz^2=0,\\
&C_{8,7}\colon x^3+\tfrac{13}{4}x^2y+\tfrac{43}{4}xy^2-14xyz-xz^2+15y^3-40y^2z+25yz^2=0.
\end{align*}
On $S$, we define the four exceptional curves $e_1,\ldots,e_4$ to be the strict transforms of $L_1,\ldots,L_4$, and $e_5,c_5$ the strict transforms of $C_{7,8}$ and $C_{8,7}$, respectively. 
Since $L_1,\ldots,L_4,C_{7,8},C_{8,7}$ all contain the point $(0:0:1)$, the six exceptional curves $e_1,\ldots,e_5,c_5$ are concurrent in a point $P$ in $S$. Let $\psi$ be the morphism associated to the linear system $|-2K_S|$. By Remark~\ref{rem}, since $e_5\cdot c_5=3$, the point~$P$ lies on the ramification curve of $\psi$, and for $i\in\{1,\ldots,4\}$, the partners of $e_1,\ldots,e_4$ contain~$P$, too. We conclude that there are ten exceptional curves on $S$ that are concurrent in~$P$. 
\end{example}

\begin{example}\label{char5711311719}for $p\in\{3,5,7,11,13,17,19\}$, we construct a del Pezzo surface over a field of characteristic $p$ with ten exceptional curves that are concurrent in a completely analogous way to the one in Example~\ref{ex1}. \\
Let $p$ be a prime, and $\mathbb{F}_p$ be the finite field of $p$ elements. Let $f_p\in\mathbb{F}_p[x]$ be an irreducible polynomial. Let $\alpha$ be a root of $f_p$, and $\mathbb{F}\cong\mathbb{F}_p[x]/f_p$ the field extension of~$\mathbb{F}_p$ obtained by adjoining $\alpha$ to $\mathbb{F}_p$. 
For $a,b,c,m,u,v\in\mathbb{F}$, define the following eight points in $\mathbb{P}^2_{\mathbb{F}}$. 
\begin{align*}
&Q_1=(0:1:1); & &Q_5=(1:1:1);\\
&Q_2=(0:1:a); & &Q_6=(1:1:c);\\
&Q_3=(1:0:1); & &Q_7=(m:1:u);\\
&Q_4=(1:0:b); & &Q_8=(m:1:v).
\end{align*}
Let $x,y,z$ be the coordinates of $\mathbb{P}^2_{\mathbb{F}}$. We define again the lines $L_1,L_2,L_3$ as in Example~\ref{char2}, and the line $L_4$ by $x=my$. Note that $L_1,\ldots,L_4$ all contain the point~$(0:0:1)$. Let $C_{7,8}$ be the unique cubic through $Q_1,\ldots,Q_6,Q_8$ that is singular in $Q_8$, and $C_{8,7}$ the unique cubic through $Q_1,\ldots,Q_7$ that is singular in~$Q_7$. 
For all fixed $(p,f_p,a,b,c,m,u,v)$ that we describe below, we check as we did in Example \ref{ex1} that the eight points are in general position, and compute the defining equations for $C_{7,8}$ and $C_{8,7}$. In all cases, the point $(0:0:1)$ is also contained in $C_{7,8}$ and $C_{8,7}$, and as in Example \ref{ex1} this implies that there are 10 exceptional curves on the del Pezzo surface obtained by blowing up $\mathbb{P}^2_{\mathbb{F}}$ in $Q_1,\ldots,Q_8$, that are concurrent in a point on the ramification curve.

\vspace{5pt}

$\bullet$ For $p=3$ we take $$f_p=x^3 + 2x + 1,\;\;(a,b,c,m,u,v)=(\alpha,\alpha^{20},\alpha^{15},\alpha^{8},\alpha^2,\alpha^{12}).$$ 
%We find the cubics
%\begin{multline*}
%C_{7,8}\colon 
% \alpha^8x^3 + \alpha^5x^2y + \alpha^6x^2z + \alpha^{24}xy^2 + \alpha^{15}xyz + \alpha^{14}xz^2 + \alpha^{18}y^3 + 2y^2z + \alpha^{17}yz^2=0,
% \end{multline*}
% \begin{multline*}
%C_{8,7}\colon 
% \alpha^{20}x^3 + \alpha^4x^2y + \alpha^{18}x^2z + \alpha^{24}xy^2 + 
%    \alpha^{22}xyz + xz^2 + y^3 + \alpha^{21}y^2z + \alpha^{25}yz^2=0.
%    \end{multline*}
$\bullet$ For $p=5$ we take $$f_p=x^2+4x+2,\;\;(a,b,c,m,u,v)=(\alpha^{19},\alpha^{11},\alpha^{10},\alpha^{21},\alpha^3,\alpha^{14}).$$
% We find the cubics
% \begin{multline*}
% C_{7,8}\colon 
% \alpha^{13}x^3 + x^2y + 2x^2z + \alpha^4xy^2 + \alpha^{11}xyz + 
%    \alpha^2xz^2 + \alpha^{19}y^3 + \alpha^{21}y^2z + yz^2=0,
%\end{multline*}
% \begin{multline*}
% C_{8,7}\colon 
% \alpha^{10}x^3 + \alpha^{17}x^2y + \alpha^3x^2z + \alpha^2xy^2 + 
%    \alpha^{16}xyz + \alpha^{23}xz^2 \\
%    + \alpha^{21}y^3 + \alpha^{23}y^2z + \alpha^2yz^2=0.
%    \end{multline*}
$\bullet$ For $p=7$ we take $$f_p=x^2+6x+3,\;\;(a,b,c,m,u,v)=(3,\alpha^{45},\alpha^{35},\alpha^{4},\alpha^{46},\alpha^9).$$
%We find the cubics
% \begin{multline*}
%C_{7,8}\colon 
% \alpha^{34}x^3 + \alpha^{22}x^2y + \alpha^7x^2z + \alpha^{30}xy^2 + 
%    \alpha^{30}xyz + \alpha^{37}xz^2 \\
%    + \alpha^4y^3 + \alpha^4y^2z + \alpha^{44}yz^2=0,
%\end{multline*}
% \begin{multline*}   
%C_{8,7}\colon 
% 4x^3 + \alpha^{37}x^2y + \alpha^5x^2z + \alpha^{47}xy^2 + \alpha^{11}xyz + 
%    \alpha^{35}xz^2 \\
%    + \alpha^{26}y^3 + \alpha^{26}y^2z + \alpha^{18}yz^2=0.
% \end{multline*}
 $\bullet$ For $p=11$ we take $$f_p=x^2+7x+2,\;\;(a,b,c,m,u,v)=(\alpha^{106},\alpha^{94},4,\alpha^{62},\alpha^{111},\alpha^{6}).$$
%We find the cubics
% \begin{multline*}
%C_{7,8}\colon 
% \alpha^{90}x^3 + 5x^2y + \alpha^{38}x^2z + \alpha^9xy^2 + \alpha^{32}xyz + 
%    \alpha^{116}xz^2 \\
%    + \alpha^{92}y^3 + \alpha^{95}y^2z + \alpha^{106}yz^2=0,
%\end{multline*}
% \begin{multline*}   
%C_{8,7}\colon 
% 8x^3 + \alpha^3x^2y + \alpha^{104}x^2z + 4xy^2 + \alpha^{33}xyz + 
%    \alpha^{62}xz^2 \\
%    + \alpha^{77}y^3 + \alpha^{80}y^2z + \alpha^{91}%yz^2=0.
%\end{multline*}
$\bullet$ For $p=13$ we take $$f_p=x^2+12x+2,\;\;(a,b,c,m,u,v)=(\alpha^{161},\alpha^{156},\alpha^{83},\alpha^{94},\alpha^{132},\alpha^{146}).$$
%We find the cubics
%\begin{multline*}  
%C_{7,8}\colon 
% \alpha^{33}x^3 + \alpha^{145}x^2y + \alpha^{116}x^2z + %\alpha^{32}xy^2 + 
%    \alpha^{108}xyz + \alpha^{45}xz^2 \\
%    + \alpha^{46}y^3 + \alpha^{148}y^2z + 
%    \alpha^{53}yz^2=0,
%  \end{multline*}  
% \begin{multline*}     
%C_{8,7}\colon 
% \alpha^{12}x^3 + \alpha^{105}x^2y + \alpha^{95}x^2z + \alpha^{86}xy^2 + 
%    \alpha^{101}xyz + \alpha^{24}xz^2 \\
%    + \alpha^{116}y^3 + \alpha^{50}y^2z + 
 %   \alpha^{123}yz^2=0.
%\end{multline*}
$\bullet$ For $p=17$ we take $$f_p=x^2+16x+3,\;\;(a,b,c,m,u,v)=(\alpha^{74},\alpha^{166},\alpha^{64},\alpha^{24},\alpha^{178},\alpha^{250}).$$
%We find the cubics
%\begin{multline*}  
%C_{7,8}\colon 
% \alpha^{154}x^3 + \alpha^{141}x^2y + \alpha^{241}x^2z + %12xy^2 + 
%    \alpha^{200}xyz + \alpha^{276}xz^2 \\
%    + 3y^3 + \alpha^{101}y^2z + \alpha^{232}yz^2=0,
%\end{multline*}
%\begin{multline*} 
%C_{8,7}\colon 
% \alpha^{132}x^3 + \alpha^{173}x^2y + \alpha^{219}x^2z + %6xy^2 + 
%    \alpha^{278}xyz + \alpha^{254}xz^2\\
%     + 10y^3 + \alpha^{137}y^2z + \alpha^{268}yz^2=0.
%   \end{multline*}
$\bullet$ For $p=19$, we take $\mathbb{F}=\mathbb{F}_{19}$, and 
$(a,b,c,m,u,v)=(2,2,14,8,7,12)$.

\vspace{5pt}

%We find the cubics
%\begin{multline*} 
%C_{7,8}\colon 
% 15x^3 + 4x^2y + 6x^2z + 5xy^2 + 10xyz + 17xz^2 + 15y^3 + 6y^2z + 17yz^2=0,
%\end{multline*}
%\begin{multline*} 
%C_{8,7}\colon 
% 2x^3 + 2x^2y + 16x^2z + 16xy^2 + xyz + xz^2+ y^3 + 8y^2z + 10yz^2=0.
%\end{multline*}
All these examples are generated in \texttt{magma} by generating random values for the elements $a,b,c,m,u,v$ in each case, until the points defined by the values are in general position. 
\end{example}

\subsection{Outside the ramification curve}
In this section we give examples that show that the upper bound in Theorem \ref{thm2} is sharp. Example \ref{char3} gives a del Pezzo surface of degree one over a field of characteristic 3 with twelve exceptional curves that are concurrent in a point outside the ramification curve. In Example \ref{ex2} we give a del Pezzo surface over a field of characteristic unequal to 5 that contains ten exceptional curves that are concurrent in a point outside the ramification curve. This surface is isomorphic to the one in Example 4.1 in \cite{SL14} if the characteristic of $k$ is unequal to $2$ and $3$. We do not give an example in characteristic 5, since we have not found one; it might very well be that the maximum in this case is less than ten.

\begin{example}\label{char3}Let $f=x^3+2x+1$ be a polynomial in $\mathbb{F}_3[x]$. Let $\alpha$ be a root of $f$, and let $\mathbb{F}\cong\mathbb{F}_3[x]/f$ be the field of 27 elements obtained by adjoining $\alpha$ to $\mathbb{F}_3$. Let $\mathbb{P}^2_{\mathbb{F}}$ be the projective plane over $\mathbb{F}$, and define the following eight points in this plane.
\begin{align*}
&Q_1=(1:0:1); & &Q_5=(0:1:1);\\
&Q_2=(\alpha^{20}:0:\alpha^{18}); & &Q_6=(0:2:1);\\
&Q_3=(\alpha^{6}:\alpha^{23}:\alpha^{2}); & &Q_7=(\alpha^{9}:\alpha^{23}:2);\\
&Q_4=(\alpha^{15}:\alpha^{19}:\alpha^{18}); & &Q_8=(\alpha^{24}:\alpha^{7}:\alpha^{5}).
\end{align*}

With \texttt{magma} we check that no three of these points are on a line, no six of them are on a conic, and no seven of them are on a cubic that is singular at one of them, by checking that the appropriate determinants of the matrices in Lemma \ref{genpos1} are nonzero. Therefore, the blow-up of $\mathbb{P}^2_{\mathbb{F}}$ in these eight points is a del Pezzo surface $S$ of degree one. \\
Let $L_1$ be the line containing $Q_1$ and $Q_2$, which is given by $y=0$. Let $L_2$ be the line containing $Q_3$ and $Q_4$, which is given by $\alpha^{23}y=x+z$. 
For five points $Q_{i_1},\ldots,Q_{i_5}$ we find the equation of the conic containing these points by computing the determinant of the matrix $N$ in Lemma \ref{genpos1}, with $(R_2,\ldots,R_6)=(Q_{i_1},\ldots,Q_{i_5})$, and where the first row is replaced by the list Mon$_2$. We obtain the following conics in $\mathbb{P}^2_{\mathbb{F}}$.  

\vspace{5pt}

$C_1\colon x^2 + \alpha^7xy + y^2 + 2z^2=0,\mbox{ containing }Q_1,Q_3,Q_5,Q_6,Q_7.$\\
$C_2\colon x^2 + \alpha^{16}xy + y^2 + 2z^2=0,\mbox{ containing }Q_1,Q_4,Q_5,Q_6,Q_8.$\\
$C_3\colon x^2 + \alpha^{25}xz + \alpha^{16}y^2 + \alpha^{11}yz + \alpha^{15}z^2=0,\mbox{ containing }Q_2,Q_3,Q_5,Q_7,Q_8.$  \\ 
$C_4\colon x^2 + \alpha^9xy + \alpha^{25}xz + \alpha^{20}y^2 + \alpha^6yz + \alpha^{15}z^2=0,
 \mbox{ containing }Q_2,Q_4,Q_6,Q_7,Q_8.$
 
\vspace{5pt}

Similarly, we compute defining equations for the quartics $D_1,D_2,D_3,D_4$ containing all the eight points with singularities in $Q_1,Q_7,Q_8$, and $Q_2,Q_5,Q_6$, and $Q_3,Q_6,Q_8$, and $Q_4,Q_5,Q_7$, respectively. We find
\begin{multline*}   
D_1\colon \alpha^4x^4 + \alpha^{11}x^3y + \alpha^{12}x^3z + \alpha^{24}x^2y^2 + \alpha^{10}x^2yz +
    \alpha^{16}x^2z^2 + \alpha^{16}xy^3 + \alpha^{21}xy^2z \\
    + \alpha^{17}xyz^2 + 
    \alpha^{25}xz^3 + \alpha^6y^4 + \alpha^{12}y^3z + \alpha^{25}yz^3 + \alpha^{19}z^4=0,
\end{multline*}    
\begin{multline*}   
D_2\colon \alpha^{14}x^4 + x^3y + \alpha^{16}x^3z + \alpha^4x^2y^2 + \alpha^4x^2yz + 
    \alpha^{21}x^2z^2 + \alpha^{25}xy^3 + \alpha^{16}xy^2z  \\
  +   \alpha^{12}xyz^2 + 
    \alpha^3xz^3 + \alpha^5y^4 + \alpha^5y^2z^2 + \alpha^5z^4=0,
\end{multline*}    
\begin{multline*}   
D_3\colon \alpha^{21}x^4 + \alpha^4x^3y + \alpha^{20}x^3z + \alpha^9x^2y^2 + \alpha^{19}x^2yz + 
    \alpha^3x^2z^2 + \alpha^{21}xy^3
     + \alpha^{11}xy^2z \\
     + \alpha^2xyz^2 + 
    \alpha^7xz^3 + \alpha^2y^4 + \alpha^{17}y^3z + \alpha y^2z^2 + \alpha^4yz^3 + 
    \alpha^{23}z^4=0,
\end{multline*} 
\begin{multline*}   
 D_4\colon \alpha^{19}x^4 + \alpha^{22}x^3y + \alpha^{18}x^3z + \alpha^{20}x^2y^2 + \alpha^{21}x^2yz 
    + \alpha x^2z^2 + \alpha^2xy^3 + \alpha^{20}xy^2z \\ 
    + \alpha^{10}xyz^2     + \alpha^5xz^3 + \alpha^{23}y^4 + \alpha^{20}y^3z + \alpha^3y^2z^2 + \alpha^7yz^3 +
    \alpha^{21}z^4=0.
\end{multline*}
Finally, in a similar way we compute the defining equations of the quintics $G_1$ and $G_2$, which contain all eight points and are singular in $Q_1,Q_2,Q_3,Q_4,Q_5,Q_8$, and $Q_1,Q_2,Q_3,Q_4,Q_6,Q_7$, respectively. We obtain 
\begin{multline*}
G_1\colon \alpha x^5 + \alpha^8x^4y + 2x^4z + \alpha^{21}x^3y^2 + \alpha^{20}x^3yz + 
    \alpha^{23}x^3z^2 + \alpha^5x^2y^3 + \alpha^{25}x^2y^2z \\
    + \alpha^{22}x^2yz^2 + \alpha^7x^2z^3 + \alpha^{25}xy^4 + \alpha^{12}xy^3z + 2xy^2z^2 + 
    \alpha^{25}xyz^3 + \alpha^2xz^4 \\
    + \alpha^{21}y^5 + \alpha^6y^4z 
    + \alpha^8y^3z^2
    + \alpha y^2z^3 + \alpha^5z^5=0,
\end{multline*}
\begin{multline*}
G_2\colon \alpha^4x^5 + \alpha^{11}x^4y + \alpha^{16}x^4z + \alpha^7x^3y^2 + \alpha^{16}x^3yz + 
    x^3z^2 + \alpha x^2y^3 + \alpha^{25}x^2y^2z \\
    + \alpha^2x^2yz^2 + 
    \alpha^{10}x^2z^3 + \alpha^{17}xy^3z +
     \alpha^{15}xy^2z^2 + \alpha^8xyz^3 
     + 
    \alpha^5xz^4 
    + \alpha^{14}y^5 \\
    + \alpha^{16}y^4z + \alpha^{11}y^3z^2 + 
    \alpha^{10}y^2z^3 + \alpha^{25}yz^4 + \alpha^8z^5=0.
\end{multline*}
Now consider the point $P=(2:0:1)$ in $\mathbb{P}^2_{\mathbb{F}}$. It is an easy check that $P$ is contained in all twelve curves $L_1,L_2,C_1,\ldots,C_4,D_1,\ldots,D_4,G_1,G_2$. Therefore, the twelve exceptional curves on $S$ that are the strict transforms of these twelve curves in $\mathbb{P}^2_{\mathbb{F}}$ are concurrent in a point $Q$ on $S$. Let $\psi$ be the morphism associated to the linear system $|-2K_S|$. Since none of the twelve exceptional curves intersect each other with multiplicity 3, the point $Q$ is outside the ramification curve of $\psi$.
\end{example}

\begin{example}\label{ex2}
Let $k$ be a field of characteristic unequal to 5. For $\beta$ an element in $k^*$, let $S$ be the del Pezzo surface of degree one in $\mathbb{P}(2,3,1,1)$ with coordinates $x,y,z,w$ over $k$ given by $$y^2+(\beta+1)xyw+\beta yw^3= x^3 + \beta x^2w^2 -z^5w.$$
For char $k\neq2,3$, this surface is isomorphic to the surface in \cite{SL14}, Example 4.1. The blow-up of $S$ in the point $(1:1:0:0)$ has the structure of an elliptic surface over $\mathbb{P}^1$ with coordinates $z,w$. The fiber above $z=0$ contains a point of order 5, which is given by $Q=(0:0:0:1)$; in fact, the cubic curve $$E:y^2+(\beta+1)xy+\beta y= x^3 + \beta x^2$$ is the universal elliptic curve over the modular curve $Y_1(5)=\mbox{Spec}\left(k[\beta,1/\Delta(E)]\right)$ with $\Delta(E)=-\beta^5(\beta^2+11\beta-1)$ that parametrizes elliptic curves over extensions of~$k$ with a point of order 5 (\cite{Edix11}, Proposition 8.2.8).  \\
Choose $\beta$ such that $S$ is smooth in all characteristics; for example, we can set $\beta=2$ in characteristic 11, and $\beta=1$ in all other characteristics. Let $\rho,\sigma$ be elements of a field extension of $k$ such that $\rho^2=\rho+1$, and $(\beta+\rho^5)\sigma^5=1$. Consider the curve $C_{\rho,\sigma}$ in $\mathbb{P}(2,3,1,1)$ defined by 
\begin{align*} 
&x=\sigma^2z^2w^4 + \rho\sigma zw^5,\\
&y=-\sigma^3z^3w^3+(\rho+1)\sigma^2z^2w^4.
\end{align*}
Then $C_{\rho,\sigma}$ is an exceptional curve in~$S$, defined over $k(\rho,\sigma)$. It is easy to see that~$Q$ is contained in $C_{\rho,\sigma}$. There are ten pairs $(\rho,\sigma)$, so we conclude that there are ten exceptional curves through $Q$ over a field extension of~$k$. Finally, let $\varphi$ be the morphism associated to $|-2K_S|$. Since the points on the ramification curve of $\varphi$ are exactly the points on $S$ that are 2-torsion on their fiber, we conclude that $Q$ is outside the ramification curve. 
\end{example}

\begin{remark}
In the previous example, the point $Q$ is torsion on its fiber of the elliptic surface associated to $S$ (obtained by blowing up the base point of the anticanonical linear system $|-K_S|$, which is $(1:1:0:0)$), and it is contained in a high number of exceptional curves on $S$. A natural question is whether a point contained in `many' exceptional curves is always torsion on its fiber (where `many' would need to be specified). A positive answer to this question, where we take `many' to be at least 9, seems intuitively true by the following argument, which was pointed out to us by several people. Let $X$ be a del Pezzo surface of degree 1 over a field $k$, and let~$P$ be a point on $X$ that is contained in at least 9 exceptional curves, say $L_1,\ldots,L_n$. These curves correspond to sections of the elliptic surface $\mathcal{E}$ associated to $X$ \cite[Lemma 10.9]{Sh90}, which in turn correspond to elements in the Mordell--Weil group of $\mathcal{E}$ (which is the Mordell--Weil group of the generic fiber seen as elliptic curve of the function field $k(t)$). This Mordell--Weil group has rank at most 8 over $k$ \cite[Theorem 10.4]{Sh90}, so in this group there must be a relation $a_1L_1+\cdots+a_nL_n=0$, where $a_1,\ldots,a_n\in \mathbb{Z}$. Since all $n$ exceptional curves contain the point $P$, on the fiber of $P$ this specializes to $(a_1+\cdots+a_n)P=0$. If one reasons too quickly, it seems that this proves that $P$ is torsion of order dividing $a_1+\cdots +a_n$ on its fiber. However, it might be the case that $a_1+\cdots +a_n=0$, so this does not prove a positive answer to our earlier question. With help of the results in \cite{WvL}, we can show that for $n\geq9$, if $L_1,\ldots,L_n$ is the maximal set of lines going through $P$, then there is always a relation between $L_1,\ldots,L_n$ in the Mordell--Weil group of $\mathcal{E}$ that specializes to a non-trivial relation on the fiber of $P$, thus implying that $P$ is torsion. See \cite[Chapter 5]{W21}. 
\end{remark}

\bibliographystyle{alpha}
\bibliography{DP1}

\vspace{11pt}

\addresseshere

\end{document}